\documentclass[aap,press]{apt}
\usepackage[colorlinks = true,linkcolor = blue,urlcolor  = blue,citecolor = blue,anchorcolor = blue]{hyperref}
\usepackage{graphicx}
\usepackage{subcaption}
\usepackage{amsmath}

\authornames{C.~DURANSTANTI AND S.~KHAN AND A.~OLENKO} 
\shorttitle{Anisotropic Spherical Random Field} 

\begin{document}

\title{Exploring the Structure of Anisotropic Random Fields on the Sphere} 

\authorone[Sapienza Universit\'a di Roma]{Claudio Durastanti}
\authortwo[La Trobe University]{Shahid Khan}
\authorthree[La Trobe University]{Andriy Olenko}

\addressone{Sapienza Universit\'a di Roma, Roma, 00185, Italy} 
\emailone{claudio.durastanti@uniroma1.it} 
\addresstwo{La Trobe University, Melbourne, 3086, Victoria, Australia} 
\emailtwo{shahid.khan@latrobe.edu.au} 
\addressthree{La Trobe University, Melbourne, 3086, Victoria, Australia} 
\emailthree{a.olenko@latrobe.edu.au} 

\begin{abstract}
This paper addresses fundamental questions about the existence and structure of isotropic and anisotropic spherical random fields and the properties of composite transformations of isotropic fields. It introduces a hierarchical structure comprising several classes of anisotropic random fields, using geometric and spectral approaches, and provides a geometric--spectral classification of second-order anisotropy on $\mathbb{S}^2$. We characterize feasible isotropic and anisotropic scenarios and identify those that, in principle, cannot occur, in contrast to the case of random fields in Euclidean space. Furthermore, we establish connections between geometric invariance properties and the structure of the spectral covariance matrix of spherical random fields. The obtained results lay the foundations for applications in statistics and data analysis, providing guidance on feasible models, their spectral representations, and appropriate sampling schemes for parameter estimation or anisotropy hypothesis testing for spherical data.
\end{abstract}

\keywords{random fields; anisotropy; spectral representation; non-stationary fields} 

\ams{60G60; 60G15}{62M15; 62H11} 

\section{Introduction} \label{s1} 

Spherical random fields are one of the main stochastic models and tools used to study spherical and directional data in applications such as cosmology, geophysics, and environmental science, see \cite{Christakos, Duque, Hristopulos, Malyarenko2, marpecbook, planck18}. One of their most notable recent applications is the analysis of the Cosmic Microwave Background radiation (CMB), where spherical random fields are used to model temperature and polarization fluctuations observed across the celestial sphere. Properties of such fields are essential for testing cosmological theories concerning the early universe and the nature of dark matter and dark energy, see, for example, \cite{planck18}. While classical CMB analysis relies heavily on the assumption of isotropy, increasing empirical evidence suggests the presence of anisotropic features, motivating the development of stochastic models that go beyond rotation-invariant frameworks. However, despite their practical relevance, anisotropic spherical random fields have remained much less studied than their isotropic counterparts.

Properties of spherical random fields are typically characterized through their covariance functions, corresponding spectral decompositions, and angular power spectra, which provide insight into both spatial dependencies and stationarity. There is an extensive body of recent literature on isotropic models, beginning with several highly cited publications \cite{ Gneiting, Kuriki, Lang, Leonenko99, marpecbook, MarinucciVadlamani, Spodarev, Yadrenko}. By contrast, real spherical data often display non-stationary or anisotropic behavior that cannot be captured within rotation-invariant models. Several recent contributions proposed specific anisotropic constructions \cite{Allard, Blake, BouDurMarTod24, Caponera, Duque, Jeong}. Despite it, a unified theoretical framework describing anisotropy in terms of geometric invariance properties and the structure of the full spectral covariance matrix is still missing.
In particular, while isotropy admits a complete spectral characterization
through the diagonalization property \eqref{iso}, no analogous classification has been available for anisotropic fields that
systematically links geometric symmetry breaking with the precise
pattern of cross-correlations among harmonic coefficients. The paper fills this gap by providing a geometric--spectral
classification of second-order anisotropy classes on the sphere~$\mathbb{S}^2$.

Since non-stationary random fields can have different properties at different locations, they are a challenging object both for mathematical investigations and statistical estimation. To deal with it, the most widely used non-stationary model in Euclidean space, elliptic anisotropy, assumes that for all $\mathbf{x}_1, \mathbf{x}_2 \in \mathbb{R}^{3}$ the covariance function of a random field $T(\cdot),$ which measures the statistical dependency between values of the field at different points, has the following form
\begin{equation} \label{aa}
    C(\mathbf{x}_1, \mathbf{x}_2) = \mathbf{E}\left((T(\mathbf{x}_1)-\mathbf{E}(T(\mathbf{x}_1)) ({T(\mathbf{x}_2)}-\mathbf{E}(T(\mathbf{x}_2)))^*\right) = B\left(\|A(\mathbf{x}_1 - \mathbf{x}_2)\|\right),
\end{equation}
where the symbol $*$ denotes the complex conjugation,  $A$ is a $3 \times 3$ invertible matrix, and $B(\cdot)$ is a real-valued function. This construction can be interpreted as a global deformation of an underlying isotropic field, obtained by uniformly stretching or squeezing the ambient space in prescribed directions. Its advantage is that a single transformation governs anisotropic behavior throughout the entire domain and can be described through a small number of parameters.
However, it crucially relies on global linear deformations in Euclidean space and therefore lacks a direct analogue on the sphere, where no such uniform stretching or squeezing can be consistently defined. However, $\mathbb{S}^2$ possesses a natural transformation group of rotations, $SO(3),$ and other transformations can be used to alter its geometry. This raises the question of whether one can construct spherical random fields that are anisotropic yet remain homogeneous in a generalized sense, namely, statistically invariant under a prescribed class of transformations while breaking full rotational symmetry.

Another related problem is constructing anisotropic spherical random fields with complex dependence structures by using composite models based on simpler isotropic components. Multiplicative, convolutional, and deformed random fields, for instance, provide flexible mechanisms for introducing directional effects and spatial heterogeneity while retaining analytical tractability~\cite{Berzin, Perrin,  rm21,  srk12, Sampson}.

Spectral decompositions based on spherical harmonics provide one of the main analytical tools for studying isotropic random fields on the sphere, allowing covariance structure and scale-dependent behavior to be expressed in a precise and tractable form, see \cite{marpecbook} and references therein. In applications such as geophysics and cosmology, these representations form the basis of data analysis. In particular, in cosmological studies only a single realization of the CMB sky is available, making spectral coefficients the primary objects for statistical inference.
The angular power spectrum offers insight into the covariance function of the field across various angular scales, revealing the amount of power (``energy'') associated with each spherical harmonic degree, $\ell.$ Lower values of $\ell$ correspond to broad global patterns, while higher values capture finer, localized details. In the presence of anisotropy, deviations from the isotropic angular power spectrum reveal directional or spatially varying features. Therefore, understanding how anisotropy affects the structure and dependence of these coefficients is essential for both modeling and inference on the sphere.

This paper addresses several general questions about the construction and characterization of anisotropic spherical fields. First, it introduces hierarchical classes of anisotropic fields and clarifies their existence and interrelations based on geometric properties. Then, it presents a general framework for constructing anisotropic fields via composite transformations of isotropic models. Finally, it derives the corresponding spectral representations and identifies how specific geometric forms of anisotropy or composite transformations translate into structural constraints on harmonic coefficients.
These results provide a rigorous theoretical foundation for understanding spherical anisotropy and offer tools for future methodological developments and applications.

\section{Definitions and assumptions} \label{s2}
This section introduces the main notations, definitions, and assumptions used in this work. It starts by establishing geometric and probabilistic preliminaries to formalize the notions of isotropy and anisotropy for spherical random fields. Then, it introduces the harmonic analysis framework for studying the spectral representation of these fields.

Let $\mathbb{N}_0:=\mathbb{N} \cup \{0\}.$ $\mathbb{S}^2:= \{\mathbf{x} \in \mathbb{R}^3,\, ||\mathbf{x}||=1 \}$ will denote the unit sphere in the real three-dimensional Euclidean space $\mathbb{R}^3,$ where $||\cdot||$ stands for the Euclidean norm in $\mathbb{R}^3.$ For two points $\mathbf{x}_1$ and $\mathbf{x}_2\in \mathbb{S}^2,$ the notation
$\gamma (\mathbf{x}_1,\mathbf{x}_2)$ will be used to denote the great-circle distance, which is the angle between two rays from the origin to these points. The notations $O(\mathbf{x}_0, r):= \{\mathbf{x} \in \mathbb{S}^2:\, \gamma(\mathbf{x}_0,\mathbf{x})=r \}$  and $D(\mathbf{x}_0, r):= \{\mathbf{x} \in \mathbb{S}^2:\, \gamma(\mathbf{x}_0,\mathbf{x})<r\},$ $r \in (0, \pi),$ will respectively represent a circle and an open disk on $\mathbb{S}^2$ with the center at $\mathbf{x}_0$ and the angular radius~$r.$  Let $SO(3)$ denote the group of rotations on $\mathbb{R}^3.$ A rotation $\rho \in SO(3)$ maps points on $\mathbb{S}^2$ to other points on $\mathbb{S}^2$. It can be identified by three angular coordinates, known as Euler angles, $(\alpha,\beta,\psi)$, where $\alpha \in [0, 2\pi),$ $\beta \in \left[0,\pi\right]$, and $\psi \in  [0, 2\pi),$ see, for example,~\cite{vmk}. Then $(-\psi, -\beta, -\alpha)$ are Euler angles of $\rho^{-1} $ obtained by reversing both the order of rotations and the signs of the angles. All random variables will be defined on the same probability space denoted by $(\Omega, \mathcal{F}, P).$

\subsection{Spherical random fields and anisotropy}
This subsection introduces the notion of spherical random fields and develops a hierarchy of isotropy and anisotropy classes.

\begin{definition}
A measurable function $T(\omega,\mathbf{x}):\Omega \times \mathbb{S}^2 \rightarrow \mathbb{C} $ is called a spherical second-order random field, if for all $\mathbf{x}\in \mathbb{S}^2$ the second moments of the random variables $T(\cdot,\mathbf{x})$ are finite, i.e. $\mathbf{E} |T(\cdot, \mathbf{x}) |^2  < \infty.$
\end{definition}
Henceforth, we will refer to $T(\cdot,\mathbf{x})$ simply as a random field and denote it by $T(\mathbf{x}),$ $\mathbf{x} \in \mathbb{S}^2.$ Without loss of generality, it will be assumed that $\mathbf{E}T(\mathbf{x})=0,$ so that the covariance function fully characterizes second-order properties and will serve as the main tool for distinguishing isotropic and anisotropic behavior.

In what follows, we will also represent points of~$\mathbb{S}^2$
using their spherical coordinates $(\theta,\varphi)$, where $\theta \in [0,\pi]$
is the colatitude and $\varphi \in [0,2\pi)$ is the longitude. When using these coordinates, we denote the random field and the spherical harmonics by the same symbols, $T(\cdot)$ and $Y_{\ell m}(\cdot),$ as in their Euclidean-coordinate representations.

From now on, unless otherwise stated, we restrict our attention to random fields with first- and second-order moments satisfying the following assumptions.
\begin{condition}\label{cond1}
 $T(\mathbf{x})$ is a spherical real-valued second-order random field with zero mean and a continuous covariance function on $\mathbb{S}^2\times \mathbb{S}^2,$ defined by
 $C(\mathbf{x}_1,\mathbf{x}_2):=\mathbf{E}(T(\mathbf{x}_1) {T(\mathbf{x}_2)}).$
\end{condition}
\begin{condition}\label{cond2}  $C(\mathbf{x}_1,\mathbf{x}_2)$   is not an identical constant for all points on $\mathbb{S}^2.$
\end{condition}

The last assumption eliminates cases of degenerate fields that consist of the same constants or a single random variable replicated at all locations.
The real-valued case is considered for simplicity and its popularity in most of applications, but many of the following results are also valid for complex-valued random fields.

\begin{definition}\label{isotropy}
The random field $T(\mathbf{x})$ is called 2-weakly isotropic, if for all $\mathbf{x}_{1}, \mathbf{x}_{2} \in \mathbb{S}^2$ and all rotations $\rho \in SO(3),$ it holds \[\mathbf{E}(T(\rho\mathbf{x_1}))=\mathbf{E}(T(\mathbf{x_1})), \quad C(\mathbf{x}_1,\mathbf{x}_2)=C(\rho \mathbf{x}_1,\rho \mathbf{x}_2).\] \end{definition}
Isotropy formalizes the idea that the probabilistic structure of the field does not privilege any specific direction on the sphere. In the sequel, a 2-weakly isotropic field will simply be referred to as isotropic.

The random field $T(\mathbf{x})$ is Gaussian if for all $k \in \mathbb{N}$ and any collections of points $ \mathbf{x}_1, \mathbf{x}_2, \ldots, \mathbf{x}_k \in \mathbb{S}^2$ the random vector $(T(\mathbf{x}_1),\ldots, T(\mathbf{x}_k))$ has a multivariate Gaussian distribution.

This work focuses on the second-order properties of spherical random fields. Therefore, all results can be directly applied to Gaussian random fields. Moreover, any rejection of isotropy based on second-order considerations remains valid for non-Gaussian fields, as such negative conclusions do not rely on higher-order distributional properties.


\begin{remark}\label{rem2}
A random field $T(\mathbf{x}),$ $ \mathbf{x} \in \mathbb{S}^2,$ is isotropic if and only if its covariance function depends only on the geodesic distance between locations. More precisely, there exists a real-valued function $B:[0,\pi]\mapsto\mathbb{R},$ such that for all $\mathbf{x}_1, \mathbf{x}_2 \in \mathbb{S}^2$ it holds
\begin{equation} \label{eq1}
    C(\mathbf{x}_1, \mathbf{x}_2)=B(\gamma (\mathbf{x}_1,\mathbf{x}_2)).
\end{equation}
This is a fundamental characterization that leads to the diagonal structure of the covariance operator in the spherical harmonic basis, which underpins the spectral representation in Section~\ref{subs}.
\end{remark}

\begin{definition}
A random field $T(\mathbf{x}),$ $\mathbf{x} \in \mathbb{S}^2,$ is anisotropic, if its covariance function $C(\mathbf{x}_1, \mathbf{x}_2)$ does not satisfy \eqref{eq1}.
\end{definition}
Thus, the covariance functions of an anisotropic random field depend not only on the angular distance between points but also on their specific locations. Unlike the isotropic case, anisotropic covariance structures do not admit reductions to a one-dimensional kernel, making their analysis substantially more complex both theoretically and statistically.

\begin{definition}
A random field $T(\mathbf{x}),$ $ \mathbf{x} \in \mathbb{S}^2,$ is called isotropic at a point $\mathbf{x}_0 \in \mathbb{S}^2,$ if there exists a real-valued function $B_{\mathbf{x}_0}:[0,\pi]\mapsto\mathbb{R}$ such that, for all $\mathbf{x} \in \mathbb{S}^2$
\begin{equation} \label{eq 2}
    C(\mathbf{x}_0, \mathbf{x})=B_{\mathbf{x}_0}(\gamma (\mathbf{x}_0,\mathbf{x})).
\end{equation}
\end{definition}

\begin{definition}\label{def_sani}
A random field $T(\mathbf{x}),$ $\mathbf{x} \in \mathbb{S}^2,$ is strictly anisotropic, if there are no points $\mathbf{x}_0 \in \mathbb{S}^2,$ where it is isotropic in the sense of \eqref{eq 2}.
\end{definition}

\begin{definition}\label{defca}
A random field $T(\mathbf{x}),$ $ \mathbf{x} \in \mathbb{S}^2,$ is called isotropic on a circle $O(\mathbf{x}_0,r)$ with fixed center $\mathbf{x}_0$ and radius $r \in (0,\pi),$ if $C(\mathbf{x}_0, \mathbf{x})$ is constant for all $\mathbf{x}\in O(\mathbf{x}_0,r).$
\end{definition}

\begin{remark}
Global isotropy implies isotropy at every point $\mathbf{x}_0 \in \mathbb{S}^2$. Only if a field is isotropic at all points with the same covariance function $B(\cdot)$ that does not depend on $\mathbf{x}_0$, then it is (globally) isotropic. One may ask whether a field can be isotropic at each point separately while failing to be (globally) isotropic, i.e.  with different covariance functions $B(\cdot)$ at two locations.

Strictly anisotropic fields exhibit no local rotational symmetry at any point on the sphere, so their covariance structure cannot be expressed as a function of angular distance anywhere on $\mathbb{S}^2$. Isotropy on a circle is a weaker notion than pointwise isotropy. The pointwise isotropy at the point $\mathbf{x}_0\in \mathbb{S}^2$ requires a covariance function to be constant along each circle~$O(\mathbf{x}_0,r)$, $r \in (0,\pi).$

\end{remark}

\begin{definition}
A random field $T(\mathbf{x}),$ $\mathbf{x} \in \mathbb{S}^2,$ is super anisotropic, if there exists no circle $O(\mathbf{x}_0,r) \subset \mathbb{S}^2$ on which it is isotropic in the sense of Definition~{\rm \ref{defca}}.
\end{definition}

\begin{remark}
Let us note that the aforementioned isotropy and anisotropy concepts can also be equivalently characterized by the level sets $L_\mu(\mathbf{x}_1)
:=
\left\{
\mathbf{x}_2 \in \mathbb{S}^2
\,\middle|\,
C(\mathbf{x}_1,\mathbf{x}_2) = \mu
\right\},
$ $\mu\in\mathbb{R},$ of a covariance function $C(\mathbf{x}_1, \mathbf{x}_2),$ when the first argument is fixed and the function is considered as a function of the second argument $\mathbf{x}_2$. For example, a random field is isotropic if, for each $\mathbf{x}_1$ and $\mu$, the corresponding level sets $L_\mu(\mathbf{x}_1)$ are either empty or unions of circles and the same for any two points of the sphere. A random field is isotropic at a point $\mathbf{x}_0 \in \mathbb{S}^2,$ if all level sets $L_\mu(\mathbf{x}_0)$ are either empty or unions of circles. A random field is super-anisotropic if none of the level sets of its covariance function includes a circle.
    \end{remark}

In recent literature, considerable attention has been given to axially symmetric spherical random fields and their applications, see, for example, \cite{Alegría, Buhmann, Hitczenko, Jones, Stein} and the references therein. The statistical properties of these fields are invariant with respect to rotations around a fixed axis.
\begin{definition}\label{def_axial}
A random field is axially symmetric around the axis passing through the point $\mathbf{x}_0$ and its antipode if, for all $\mathbf{x}_1, \mathbf{x}_2 \in \mathbb{S}^2$ and $\alpha \in [0, 2\pi)$, its covariance function satisfies the relation
\[ C(\rho_{\mathbf{x}_0, \alpha} \mathbf{x}_1, \rho_{\mathbf{x}_0, \alpha} \mathbf{x}_2) = C(\mathbf{x}_1, \mathbf{x}_2),
    \]
where $\rho_{\mathbf{x}_0, \alpha}$ denotes a rotation around that axis through $\mathbf{x}_0$ by the angle $\alpha$.
\end{definition}

\begin{definition}\label{def_axial_lon} An axially symmetric random field around the axis passing through the point $\mathbf{x}_0$ and its antipode is said to be longitudinally reversible around that axis if, for all $\mathbf{x}_1, \mathbf{x}_2 \in \mathbb{S}^2,$ it holds
\[   C(\mathbf{x}_1, \mathbf{x}_2) =
    C(\mathbf{x}_1, \rho_{\mathbf{x}_0, -2\alpha(\mathbf{x}_1, \mathbf{x}_2; \mathbf{x}_0)} \mathbf{x}_2),
    \]
    where $\alpha(\mathbf{x}_1, \mathbf{x}_2; \mathbf{x}_0)$ denotes the angle between two geodesic arcs connecting $\mathbf{x}_0$ with $\mathbf{x}_1$ and $\mathbf{x}_2,$ respectively.
\end{definition}

This property implies that the covariance structure at the point $\mathbf{x}_1$ remains unchanged if the field is reflected over the great circle determined by the point $\mathbf{x}_1$ and the axis through the point $\mathbf{x}_0,$ capturing a type of mirror symmetry around this axis.

\begin{remark}The available literature focuses only on axially symmetric fields around the axis passing through the pole $\mathbf{0}.$ For such fields, their covariance functions can be expressed in terms of functions $\tilde{C}(\cdot)$ defined on $[0,\pi]^2\times(-2\pi,2\pi)$ as
\begin{equation}\label{axiall}
C(\mathbf{x}_1, \mathbf{x}_2)=\tilde{C}(\theta_1,\theta_2,\varphi_2-\varphi_1),
\end{equation}
where the points $\mathbf{x}_1$ and $\mathbf{x}_2\in \mathbb{S}^2$ have spherical coordinates $(\theta_1,\varphi_1)$ and $(\theta_2, \varphi_2),$ respectively. In the case of longitudinally reversible random fields around the axis passing through the pole $\mathbf{0}$, in addition to (\ref{axiall}), it is required that
\begin{equation}\label{reversib}\tilde{C}(\theta_1,\theta_2,\varphi_2-\varphi_1)=\tilde{C}(\theta_1,\theta_2,\varphi_1-\varphi_2),
\end{equation}
which implies that the covariance function in (\ref{axiall}) is an even function of its third argument, i.e.,
\[C(\mathbf{x}_1, \mathbf{x}_2)=\tilde{C}(\theta_1,\theta_2,|\varphi_2-\varphi_1|).\]
\end{remark}

The introduced notions define progressively weaker forms of rotational invariance. The following hierarchy of isotropy classes holds, with the corresponding inverse inclusions for anisotropy classes:
\begin{align}
\textsc{Isotropic} & \,\subset \,\textsc{Longitudinally Reversible}
\,\subset \,\textsc{Axially Symmetric} \nonumber\\
&\subset\, \textsc{Point-isotropic}
\, \subset\, \textsc{Circle-isotropic.}\label{isotr}
\end{align}

The subsequent sections will prove that each inclusion is strict and will analyze in detail the properties of the corresponding anisotropic classes.

\subsection{Spectral representation and harmonic analysis}\label{subs}
This subsection introduces the spectral representation of spherical random fields using spherical harmonics. Isotropy corresponds to a complete decorrelation of harmonic coefficients across different degrees and orders, whereas anisotropy manifests itself through structured cross-correlations in the spectral domain.

The considered random fields can be expanded in the mean-square sense as the following Laplace series, see \cite[p. 123]{marpecbook} or \cite[p. 73]{Yadrenko}:
    \begin{equation}  \label{spec}
        {T}(\theta ,\varphi )=\sum_{\ell\in  \mathbb{N}_0}\sum_{m=-\ell}^{\ell}a_{\ell m} {Y}_{\ell m}(\theta
        ,\varphi ),
    \end{equation}
where $\{ {Y}_{\ell m}(\theta,\varphi )\}$ are the complex spherical harmonics and $\{ a_{\ell m}\}$ are the associated random harmonic coefficients.

The spectral representation~(\ref{spec}) converges in the Hilbert space $L_{2}(\Omega \times
    \mathbb{S}^2,\sin \theta d\theta d\varphi ),$ that is,%
    \[
    \lim_{L\rightarrow \infty }\mathbf{E}\left( \int\limits_{\mathbb{S}^2}\left(
    {T}(\theta ,\varphi )-\sum_{\ell=0}^{L}\sum_{m=-\ell}^{\ell} a_{\ell m} {Y}_{\ell m}(\theta ,\varphi
    )\right) ^{2}\sin \theta d\theta d\varphi \right) =0.
    \]%
The expansion (\ref{spec}) also converges in the Hilbert space $L_{2}(\Omega ),$ that is,
    \[
    \lim_{L\rightarrow \infty }\mathbf{E}\left( {T}(\mathbf{x})-\sum_{\ell=0}^{L}%
    \sum_{m=-\ell}^{\ell}a_{\ell m}{Y}_{\ell m}(\mathbf{x})\right) ^{2}=0.
    \]

For $\ell\in  \mathbb{N}_0$ and $0\leq m\leq \ell,$ the complex spherical harmonics $Y_{\ell m}(\theta,\varphi)$ are defined as
    \begin{equation} \label{ylm}
    {Y}_{\ell m}(\theta ,\varphi ):= e^{im\varphi}
    \left[\frac{(2\ell+1)(\ell-m)!}{4\pi(\ell+m)!}\right] ^{1/2}
    P_{\ell}^{m}(\cos\theta),
    \end{equation}
where $P_{\ell}^{m}(\cdot)$ denotes the associated Legendre polynomial with the indices $\ell$ and $m.$  For negative orders, we adopt the convention
\[
P_{\ell}^{-m}(x)
=
(-1)^m
\frac{(\ell-m)!}{(\ell+m)!}
P_{\ell}^{m}(x),
\qquad m>0.
\]

The integer number $\ell \in \mathbb{N}_0$ is the multipole (or degree) number, determining the total number of nodal lines in the polar direction and controlling the overall angular scale of variation, while $m \in \{-\ell, \dots, \ell\}$ is the azimuthal (or order) number, specifying the number of nodal lines in the longitudinal direction.

The spherical harmonics have the following properties
\begin{equation}\label{Ylms}
    \int_{0}^{\pi }\int_{0}^{2\pi } {Y}_{\ell m}(\theta ,\varphi ) {Y}^{\ast }_{\ell^{\prime}m^{\prime }}(\theta ,\varphi )\sin \theta d\varphi d\theta  =\delta
    _{\ell}^{\ell^{\prime }}\delta _{m}^{m^{\prime }}, \qquad \text{(Orthonormality)}
    \end{equation}
    \begin{equation}\label{Ylll}
         {Y}_{\ell m}^{\ast }(\theta ,\varphi ) =(-1)^{m} {Y}_{\ell(-m)}(\theta ,\varphi ),\quad |m|\leq\ell,\qquad \text{(Conjugation relation)}
    \end{equation}
    \begin{equation*}\label{Ylmst}
     {Y}_{\ell m}(\pi -\theta ,\varphi +\pi ) = (-1)^{\ell} {Y}_{\ell m}(\theta ,\varphi ), \qquad\qquad\qquad \text{(Parity)} \qquad
     \end{equation*}
    \begin{equation}\label{Yl0}
        {Y}_{\ell m}(\mathbf{0})=\delta^{0}_m {Y}_{\ell0}(\mathbf{0})=\sqrt{\frac{2\ell+1}{4\pi}}P_{\ell}(1)=\sqrt{\frac{2\ell+1}{4\pi}}, \ \ \ \text{(Value at the north pole)}
    \end{equation}
where $\delta _{\ell}^{\ell^{\prime }}$ is the Kronecker delta function and $\mathbf{0}$ corresponds to the pole with $\theta=0.$

The random harmonic coefficients $a_{\ell m}$ in the Laplace series~(\ref{spec}) are given by the $L_2$-projection of $T$ onto the spherical harmonics, namely,%
    \begin{equation} \label{almss}
        a_{\ell m}:=\int_{0}^{\pi }\int_{0}^{2\pi } {T}(\theta ,\varphi) {Y}_{\ell m}^{\ast
        }(\theta ,\varphi )\sin \theta d\theta d\varphi = \int\limits_{\mathbb{S}^2}T(\mathbf{x}) Y^{*}_{\ell m}(\mathbf{x}) d\mathbf{s}(\mathbf{x}),
        \end{equation}
where $d \mathbf{s} (\mathbf{x})=\sin \theta d\theta d\varphi$ is the differential area element.
For real-valued spherical random fields, the conjugation symmetry \eqref{Ylll} yields $a_{\ell (-m)} = (-1)^m \, a_{\ell m}^{\ast}.$

A spherical random field $T$ is isotropic if and only if its harmonic coefficients are uncorrelated across different degrees and orders and satisfy \cite{Baldi}
    \begin{equation}\label{iso}
    \mathbf{E}a_{\ell m}a_{\ell^{\prime }m^{\prime }}^{\ast }=\delta _{\ell}^{\ell^{\prime
    }}\delta_{m}^{m^{\prime }}C_{\ell},\quad -\ell\leq m\leq \ell,\quad -\ell^{\prime }\leq
    m^{\prime }\leq \ell^{\prime },\quad \ell,\ell^{\prime}\in \mathbb{N}_0.
    \end{equation}
   In particular, isotropy implies that the second moments of the coefficients depend only on the multipole index $\ell$, that is, $C_{\ell}:=\mathbf{E}|a_{\ell m}|^{2},$ $ -\ell\leq m\leq \ell.$
   The sequence $\left\{ C_{\ell}\right\}$ is called the angular power spectrum of the isotropic field $T(\cdot).$

We provide simple examples that illustrate several of the considered models by plotting realizations of random fields and selected covariance functions. The examples demonstrate that generating anisotropic fields from the considered models is a computationally feasible process.

We begin with an example that generates an isotropic random field, which serves as the basis for all subsequent examples. These examples illustrate how the realization and covariance structure of composite random fields change after applying the studied transformations to the base field.

\begin{example}\label{ex1} Figure~\ref{fig1a} displays a realization of a real-valued spherical isotropic random field~$T(\cdot)$, see technical details in Appendix~\ref{appA}.
The simulations are based on the angular power spectrum
$C_\ell = {1}/(\ell(\ell + 1))$ for\ $0<\ell \le \ell_{max},$ $\ell_{max}=1535,$ and $C_\ell = 0$ otherwise.

\begin{figure}[!hb]
  \centering
  \begin{subfigure}{0.53\textwidth}
    \centering
    \includegraphics[width=\textwidth, trim={0 0 0 0cm}, clip]{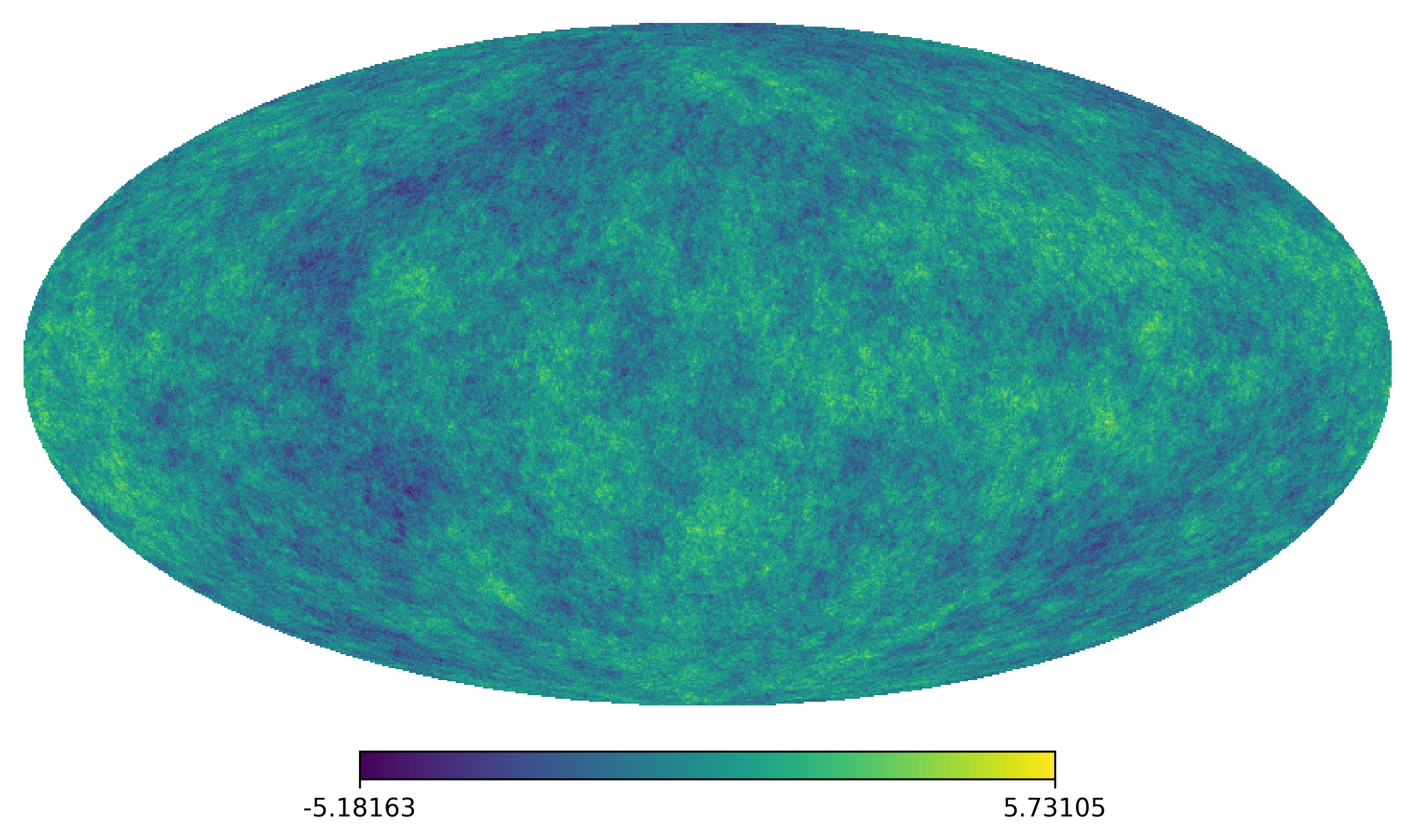}\\[0.2cm]
    \caption{\it Realization of the field}
    \label{fig1a}
  \end{subfigure}%
  \begin{subfigure}{0.61\textwidth}
    \centering
    \includegraphics[width=\textwidth, trim={4cm 0 0 5.5cm}, clip]{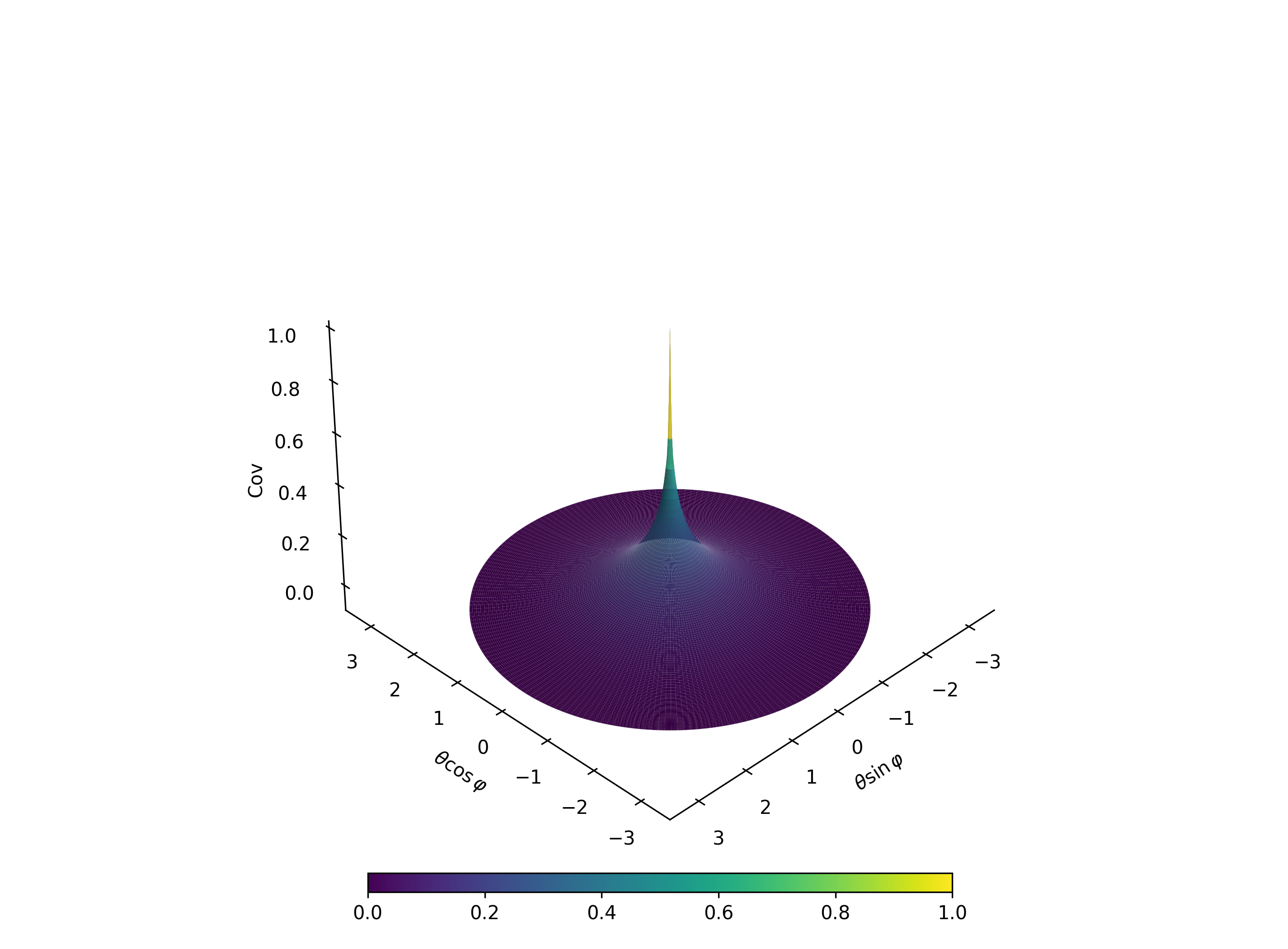}
    \caption{\it Covariance function}
      \label{fig1b}
  \end{subfigure}
\caption{\it Isotropic random field $T(\mathbf{x})$ with $C_\ell = {1}/(\ell(\ell + 1))$.}
  \label{fig1}
\end{figure}
 Figure~\ref{fig1b} plots the corresponding covariance function $C(\mathbf{0}, \mathbf{x})$ between the pole $\mathbf{0}$ and locations $\mathbf{x}$ with spherical coordinates $(\theta, \varphi).$ By isotropy, the covariance function depends solely on the great-circle distance between $\mathbf{0}$ and $\mathbf{x}$, and hence is a function of the angular separation $\theta\in[0,\pi]$. In those cases where the realisation of the random field is given, the realisations of harmonic coefficients $a_{\ell m}$ can be obtained via the projection formula~(\ref{almss}).
\end{example}

\section{Exploration of transformation-invariant anisotropy}

This section examines random fields with covariance functions $C(\mathbf{x}_0, \mathbf{x}),$ $ \mathbf{x} \in \mathbb{S}^2,$ that are invariant under specified transformations, for all reference points $\mathbf{x}_0 \in \mathbb{S}^2.$

Let us begin with a broad meta-definition.

\begin{definition}
A statistic of a spherical random field is rotationally invariant if, for any points $\mathbf{x}_0,\mathbf{x}_0^{\prime} \in \mathbb{S}^2,$ there exists a rotation $\rho \in SO(3)$ that maps $\mathbf{x}_0$ into $\mathbf{x}_0^{\prime}$ and  the statistics at $\mathbf{x}_0$ are equal to the corresponding statistics at $\mathbf{x}_0^{\prime}$ after such mapping.
\end{definition}

For example, if a statistic $S(\mathbf{x}_0)=\mathbf{E}T(\mathbf{x}_0)$ represents the mean of the field $T(\cdot)$, then it is rotationally invariant if and only if the random field has a constant mean. A further example, if $S(\mathbf{x}_0,\mathbf{x})$ denotes a statistic in $\mathbf{x}_0$, rotational invariance requires that, for each $\mathbf{x}_0,\mathbf{x}_0^{\prime}\in\mathbb{S}^2$, there exists $\rho\in SO(3)$ such that, for all~$\mathbf{x}\in\mathbb{S}^2$,
\[S(\mathbf{x}_0,\mathbf{x})=S(\rho\mathbf{x}_0,\rho\mathbf{x})= S(\mathbf{x}_0^{\prime}, \mathbf{x}'),\]
where $\mathbf{x}'=\rho\mathbf{x}$.
Thus, the means and covariance functions of all isotropic random fields are rotationally invariant.

The following result demonstrates that anisotropic random fields on the sphere exhibit fundamentally different behavior from both isotropic spherical fields and elliptically anisotropic fields in Euclidean space, such as those defined by~(\ref{aa}).

\begin{theorem} \label{t1}
If $T(\mathbf{x}),$ $\mathbf{x} \in \mathbb{S}^2,$ is an anisotropic field, then its covariance function is not rotationally invariant.
\end{theorem}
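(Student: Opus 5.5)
The plan is to prove the contrapositive. Assume the covariance function $C$ is rotationally invariant. Then we show that $C(\mathbf{x}_1,\mathbf{x}_2)=B(\gamma(\mathbf{x}_1,\mathbf{x}_2))$ for some $B$, so that $T$ is isotropic by Remark~\ref{rem2}. The argument has two parts: a reduction to the symmetry group of the covariance at the pole, and a topological obstruction on $\mathbb{S}^2$.

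\emph{Reduction.} Fix the pole $\mathbf{0}$ and set $f(\mathbf{x}):=C(\mathbf{0},\mathbf{x})$. For every $\mathbf{y}\in\mathbb{S}^2$, rotational invariance gives some $\rho_{\mathbf{y}}\in SO(3)$ with $\rho_{\mathbf{y}}\mathbf{0}=\mathbf{y}$ and
\[
C(\mathbf{y},\mathbf{z})=f(\rho_{\mathbf{y}}^{-1}\mathbf{z}) \quad \text{for all } \mathbf{z}\in\mathbb{S}^2 .
\]
Any two rotations sending $\mathbf{0}$ to $\mathbf{y}$ differ by a rotation $\rho_{\mathbf{0},\alpha}$ about the polar axis. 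Let
\[
H:=\{\alpha\in[0,2\pi):\ f\circ\rho_{\mathbf{0},\alpha}=f\}.
\]
By continuity of $C$ (Condition~\ref{cond1}), $H$ is a closed subgroup of the circle group, so either $H=SO(2)$ or $H$ is the finite cyclic group $\mathbb{Z}_n$ for some $n\ge 1$.

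\emph{The case $H=SO(2)$.} Here $f$ is constant on every circle $O(\mathbf{0},r)$, so $f(\mathbf{x})=B(\gamma(\mathbf{0},\mathbf{x}))$ for some function $B$. Since rotations preserve $\gamma$, we get $C(\mathbf{y},\mathbf{z})=B(\gamma(\mathbf{0},\rho_{\mathbf{y}}^{-1}\mathbf{z}))=B(\gamma(\mathbf{y},\mathbf{z}))$ for all $\mathbf{y},\mathbf{z}$. This is exactly \eqref{eq1}, so $T$ is isotropic, as required.

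\emph{The case $H=\mathbb{Z}_n$.} We aim for a contradiction. The value of $\rho_{\mathbf{y}}$ is determined only up to right multiplication by elements of $H$, so each $\mathbf{y}$ yields a well-defined coset $\rho_{\mathbf{y}}H$. Applying this coset to the tangent vector at $\mathbf{0}$ along the meridian $\varphi=0$ gives an unordered set of $n$ equiangular unit tangent vectors at $\mathbf{y}$. This is an $n$-direction field on $\mathbb{S}^2$ with no singular points.

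The key step, and the one I expect to be the main obstacle, is proving that the map $\mathbf{y}\mapsto\rho_{\mathbf{y}}H$ is continuous, even though the individual choices $\rho_{\mathbf{y}}$ need not be. My plan for this step is as follows.
\begin{itemize}
\item Take $\mathbf{y}_k\to\mathbf{y}$ and pass to a subsequence with $\rho_{\mathbf{y}_k}\to\sigma$, using compactness of $SO(3)$.
\item Continuity of $C$ gives $f(\sigma^{-1}\mathbf{z})=C(\mathbf{y},\mathbf{z})=f(\rho_{\mathbf{y}}^{-1}\mathbf{z})$ for all $\mathbf{z}$.
\item Since $\sigma\mathbf{0}=\mathbf{y}$, we have $\sigma=\rho_{\mathbf{y}}\rho_{\mathbf{0},\alpha}$ for some $\alpha$, and the identity above forces $\alpha\in H$.
\item Hence every subsequential limit of $\rho_{\mathbf{y}_k}H$ equals $\rho_{\mathbf{y}}H$, which gives continuity in the quotient $SO(3)/H$.
\end{itemize}

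With continuity established, we obtain a continuous, nowhere singular $n$-direction field on $\mathbb{S}^2$. Its pullback to an $n$-fold cover, or equivalently the Poincar\'e--Hopf theorem for $n$-direction fields, requires the sum of indices to equal $\chi(\mathbb{S}^2)=2\neq 0$. For $n=1$ this is the hairy ball theorem. Hence no such field exists, so $H=\mathbb{Z}_n$ is impossible and only the case $H=SO(2)$ remains.

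Finally, if Condition~\ref{cond2} or degenerate situations interfere, for example when $f$ is constant so that $H=SO(2)$ trivially, those cases already fall into the isotropic branch. This completes the contrapositive, and hence the proof of Theorem~\ref{t1}.
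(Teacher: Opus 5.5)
Your argument is correct. It rests on the same topological obstruction as the paper's proof: the hairy ball theorem, applied to a tangent field obtained by transporting a vector at one point with the rotations supplied by rotational invariance. However, it is organized differently and is more complete.

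The paper splits anisotropy into two alternatives. The first is isotropy at every point with point-dependent profiles, which it handles directly. The second is non-constancy of $C(\mathbf{x}_0,\cdot)$ on some circle $O(\mathbf{x}_0,r)$. There the paper transports a tangent vector of fixed length $\max-\min$ by ``the'' rotation taking $\mathbf{x}_0$ to $\mathbf{x}$, and asserts that the result is a continuous vector field. It uses that small rotations about $\mathbf{x}_0$ do not preserve the covariance, i.e.\ that the stabilizer is discrete. It never addresses that the rotation is determined only up to this stabilizer, so a nontrivial finite stabilizer $\mathbb{Z}_n$ makes the transported vector ill defined. Continuity of any choice is also asserted rather than proved.

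Your version closes these gaps:
\begin{itemize}
\item Your dichotomy on $H$ absorbs the paper's first alternative, since the case $H=SO(2)$ forces one profile $B$ for all points.
\item The coset map $\mathbf{y}\mapsto\rho_{\mathbf{y}}H$, together with your compactness argument, supplies exactly the missing continuity.
\item The $n$-direction-field step covers the symmetric case the paper overlooks.
\end{itemize}

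You can make the last step more elementary than Poincar\'e--Hopf for $n$-direction fields. The map $SO(3)\to SO(3)/\mathbb{Z}_n$ is a finite covering and $\mathbb{S}^2$ is simply connected. So your continuous map $\mathbf{y}\mapsto\rho_{\mathbf{y}}H$ lifts to a continuous $\tilde\rho:\mathbb{S}^2\to SO(3)$ with $\tilde\rho(\mathbf{y})\mathbf{0}=\mathbf{y}$, because every element of $\rho_{\mathbf{y}}H$ sends $\mathbf{0}$ to $\mathbf{y}$. Let $v_0$ be your unit tangent vector at $\mathbf{0}$. Then $\mathbf{y}\mapsto\tilde\rho(\mathbf{y})v_0$ is a continuous nowhere-vanishing tangent field, which contradicts the ordinary hairy ball theorem.
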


\begin{corollary} \label{CC1}
If $T(\mathbf{x}),$ $\mathbf{x} \in \mathbb{S}^2,$ is not isotropic on some circle $O(\mathbf{x}_0,r),$ then its covariance function is not rotationally invariant when restricted to circles of radius $r$.
\end{corollary}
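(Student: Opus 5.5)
The plan is to run the argument for Theorem~\ref{t1} using only pairs of points at angular distance $r$. The obstruction will be topological, in the spirit of the hairy ball theorem. Suppose, for contradiction, that $C$ restricted to circles of radius $r$ is rotationally invariant. For each $\mathbf{y}\in\mathbb{S}^2$, parametrize $O(\mathbf{y},r)$ by the angle $t\in[0,2\pi)$ of the initial tangent direction at $\mathbf{y}$ of the geodesic from $\mathbf{y}$. To fix an origin for $t$, choose any continuous local tangent frame. Define the angular profile $h_{\mathbf{y}}(t):=C(\mathbf{y},\mathbf{x}(t))$. Since $T$ is not isotropic on $O(\mathbf{x}_0,r)$, the reference profile $g:=h_{\mathbf{x}_0}$ is a nonconstant continuous $2\pi$-periodic function.

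First I would translate the invariance hypothesis into a statement about profiles. A rotation $\rho\in SO(3)$ with $\rho\mathbf{x}_0=\mathbf{y}$ maps $O(\mathbf{x}_0,r)$ onto $O(\mathbf{y},r)$ and acts on tangent directions as an orientation-preserving isometry. It therefore shifts the angle $t$ and cannot reflect it. Hence for every $\mathbf{y}$ there is $\tau(\mathbf{y})$ with
\[
h_{\mathbf{y}}(t)=g\bigl(t-\tau(\mathbf{y})\bigr)\quad\text{for all } t .
\]
Let $2\pi/k$, with $k\in\mathbb{N}$, be the minimal period of $g$. Such a minimal period exists because $g$ is continuous and nonconstant. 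Then $\tau(\mathbf{y})$ is uniquely determined modulo $2\pi/k$. Geometrically, $\tau$ defines at each point $\mathbf{y}$ an unordered set of $k$ tangent directions, equally spaced, and this set does not depend on the choice of local frame. Call this a $k$-direction field on $\mathbb{S}^2$.

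The second step is continuity of this field, which I expect to be the main technical obstacle. I would use the uniform continuity of $C$ on $\mathbb{S}^2\times\mathbb{S}^2$, which holds by Condition~\ref{cond1}. If $\mathbf{y}_n\to\mathbf{y}$, then in a continuous local frame $h_{\mathbf{y}_n}\to h_{\mathbf{y}}$ uniformly in $t$. Now suppose $\tau(\mathbf{y}_n)$ did not converge to $\tau(\mathbf{y})$ modulo $2\pi/k$. By compactness, a subsequence of shifts converges to some $\sigma\not\equiv\tau(\mathbf{y})$. Passing to the limit gives $g(t-\sigma)=g(t-\tau(\mathbf{y}))$ for all $t$. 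Then $\sigma-\tau(\mathbf{y})$ is a period of $g$, so it is a multiple of $2\pi/k$, which contradicts the choice of $\sigma$.

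Finally, I would invoke the Poincar\'e--Hopf theorem for $k$-direction fields (line fields when $k=2$). On a closed surface, the indices of such a field at its singularities lie in $\frac1k\mathbb{Z}$ and sum to the Euler characteristic. A continuous $k$-direction field on all of $\mathbb{S}^2$ has no singularities, so its index sum is $0$, whereas $\chi(\mathbb{S}^2)=2$. This contradiction shows that $C$ cannot be rotationally invariant on circles of radius $r$. An equivalent route, if one prefers to avoid fractional indices, is to lift to the $k$-fold cover of the unit tangent bundle: a continuous $k$-direction field yields a nonvanishing section of $T\mathbb{S}^2$ raised to the $k$-th tensor power, and this bundle is nontrivial because its Euler number is $2k\neq0$.

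The argument uses only the values of $C$ on pairs at distance $r$. It is therefore exactly the proof of Theorem~\ref{t1} applied to a single radius, and Corollary~\ref{CC1} follows.
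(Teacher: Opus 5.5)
Your argument is correct and uses the same topological idea as the paper. The paper proves Corollary~\ref{CC1} by citing part~(2) of the proof of Theorem~\ref{t1}. There, a tangent vector at $\mathbf{x}_0$ is transported to every point by the rotations that realize the invariance, and the resulting field contradicts the hairy ball theorem. Only pairs at distance $r$ enter, exactly as in your reduction.

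The difference is rigour. The paper transports a single vector and asserts, without proof, that the result is a well-defined continuous field. When the profile $g$ has minimal period $2\pi/k$ with $k\ge 2$, the rotation matching the profiles on $O(\mathbf{x}_0,r)$ and $O(\mathbf{y},r)$ is determined only up to a rotation by a multiple of $2\pi/k$ about $\mathbf{y}$. So the transported vector is not canonical. The paper's local remark that small rotations about $\mathbf{x}_0$ do not preserve the profile does not by itself give a consistent continuous choice.

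Your proposal closes this gap. Your $k$-direction field is the canonical, frame-independent object. Your compactness argument supplies the missing continuity, and $e\bigl((T\mathbb{S}^2)^{\otimes k}\bigr)=2k\neq 0$ (or fractional Poincar\'e--Hopf) gives the obstruction. Your version is therefore the rigorous one, at the price of a heavier topological tool.

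That price can be avoided. Rotating each fibre by $2\pi/k$ makes the unit tangent bundle a $k$-fold covering of the bundle of $k$-direction sets. Since $\mathbb{S}^2$ is simply connected, your continuous section lifts to a continuous unit vector field, and the ordinary hairy ball theorem finishes the proof. This also justifies, after the fact, the single-vector field the paper works with.
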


As the isotropic behavior at different points may be different, the next result clarifies the structure of fields that are isotropic at every point on $\mathbb{S}^2.$

\begin{theorem}\label{th2}
If $T(\mathbf{x}),$ $\mathbf{x} \in \mathbb{S}^2,$ is an isotropic random field at every point of $\mathbb{S}^2,$ then it is (globally) isotropic.
\end{theorem}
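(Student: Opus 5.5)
The plan is to use the symmetry of the covariance function. Since $T$ is isotropic at every point, for each $\mathbf{x}_0\in\mathbb{S}^2$ there is a function $B_{\mathbf{x}_0}$ with $C(\mathbf{x}_0,\mathbf{x})=B_{\mathbf{x}_0}(\gamma(\mathbf{x}_0,\mathbf{x}))$ for all $\mathbf{x}$. Because $T$ is real-valued, $C(\mathbf{x}_1,\mathbf{x}_2)=C(\mathbf{x}_2,\mathbf{x}_1)$. Hence, for any pair of points at distance $r=\gamma(\mathbf{x}_1,\mathbf{x}_2)$,
\[
B_{\mathbf{x}_1}(r)=C(\mathbf{x}_1,\mathbf{x}_2)=C(\mathbf{x}_2,\mathbf{x}_1)=B_{\mathbf{x}_2}(r).
\]
So it suffices to show that $B_{\mathbf{x}}(r)$ does not depend on $\mathbf{x}$ for each fixed $r\in[0,\pi]$. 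Remark~\ref{rem2} then gives global isotropy with $B:=B_{\mathbf{x}}$.

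Fix $r\in(0,\pi)$. Define a graph $G_r$ on $\mathbb{S}^2$ in which $\mathbf{x}$ and $\mathbf{y}$ are adjacent when $\gamma(\mathbf{x},\mathbf{y})=r$. The identity above says that $\mathbf{x}\mapsto B_{\mathbf{x}}(r)$ is constant on edges, and hence on connected components of $G_r$. The key step is to show that $G_r$ is connected, i.e.\ that any two points can be joined by a finite chain of geodesic steps of length exactly $r$.

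To see this, take $\mathbf{y}\in O(\mathbf{x},r)$. The set of points reachable in two steps from $\mathbf{x}$ is the union of the circles $O(\mathbf{y},r)$ over $\mathbf{y}\in O(\mathbf{x},r)$. This union is the closed annulus $\{\mathbf{z}:\gamma(\mathbf{x},\mathbf{z})\le \min(2r,2\pi-2r)\}$. Indeed, for fixed $\mathbf{y}$ the distance $\gamma(\mathbf{x},\mathbf{z})$ varies continuously from $0$ to $\min(2r,2\pi-2r)$ as $\mathbf{z}$ runs over $O(\mathbf{y},r)$, and rotating $\mathbf{y}$ about $\mathbf{x}$ fills in the azimuths. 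So two steps reach a closed cap $\overline{D}(\mathbf{x},s)$ with $s=\min(2r,2\pi-2r)>0$. Iterating, $2k$ steps reach every point within distance $ks$ of $\mathbf{x}$: chain caps along a geodesic from $\mathbf{x}$ to the target. This covers $\mathbb{S}^2$ once $ks\ge\pi$. Hence $G_r$ is connected and $B_{\mathbf{x}}(r)=B_{\mathbf{x}'}(r)$ for all $\mathbf{x},\mathbf{x}'$.

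The endpoint values $r=0$ and $r=\pi$ follow from continuity of $C$ (Condition~\ref{cond1}). The functions $B_{\mathbf{x}}$ are continuous on $[0,\pi]$ as restrictions of $C$ along a geodesic from $\mathbf{x}$, so the equalities on $(0,\pi)$ extend to the closed interval.

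I expect the connectivity/covering step to be the only real obstacle. It has to be stated carefully for $r$ near $\pi$, where $s=2\pi-2r$ is small but still positive, and the caps still cover after finitely many steps. Once connectivity is in place, the rest is the short symmetry argument.
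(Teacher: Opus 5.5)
Your proposal is correct and is essentially the paper's argument: the paper places equally spaced points on a geodesic between two centres, draws circles of radius $r$ around them, and uses isotropy at the intersection points of consecutive circles (together, implicitly, with the symmetry of $C$). This is exactly your two-step connectivity of $G_r$. Your version is in fact slightly more careful: your step bound $s=\min(2r,2\pi-2r)$ guarantees that consecutive circles meet even when $r>2\pi/3$, which the paper's spacing of up to $r$ does not, and you also settle $r=0$ and $r=\pi$ by continuity.
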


\begin{remark}
The conclusion of Theorem~\ref{th2} is specific to the spherical setting. On~$\mathbb{S}^2$, isotropy at every point enforces global isotropy through the transitive action of $SO(3)$. In more general geometries or under other symmetry assumptions, locally isotropic behavior does not necessarily results in global isotropy.
\end{remark}

Theorems~\ref{t1} and~\ref{th2} can be extended to a broader class of transformations on the sphere that go beyond simple rotations.

\begin{definition}
A circular scaling function is a continuous bijection
$
\lambda : [0,\pi] \to [0,\pi]
$
satisfying $\lambda(0)=0$ and $\lambda(\pi)=\pi$.
\end{definition}

\begin{definition}
A circular scaling transformation of the sphere at a point $\mathbf{x}_0 \in \mathbb{S}^2$ is a mapping of $\mathbb{S}^2$ onto itself such that every point $\mathbf{x}$ lying on a great circle through $\mathbf{x}_0$ is mapped to a point $\mathbf{x}^{\prime}$ on the same semicircle, according to
$\gamma(\mathbf{x}^{\prime},\mathbf{x}_0)
=
\lambda\left(\gamma(\mathbf{x},\mathbf{x}_0)\right),
$
where $\lambda(\cdot)$ is a circular scaling function.
\end{definition}

\begin{definition}\label{vbn}
A statistical characteristic of a spherical random field is said to be rotationally and circular-scaling invariant if, for any
$\mathbf{x}_0,\mathbf{x}_0^{\prime} \in \mathbb{S}^2$, there exists a composition of rotation and circular scaling transformations that maps $\mathbf{x}_0$ and the statistical characteristic at $\mathbf{x}_0$ onto $\mathbf{x}_0^{\prime}$ and the corresponding characteristic.
\end{definition}

\begin{theorem}\label{th3}
If $T(\mathbf{x})$, $\mathbf{x}\in\mathbb{S}^2$, is anisotropic, then its covariance function is not invariant under combined rotations and circular scaling transformations.
\end{theorem}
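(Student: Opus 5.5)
We argue by contraposition. Suppose the covariance function of $T$ is invariant under combined rotations and circular scaling transformations in the sense of Definition~\ref{vbn}; we show that $T$ is then isotropic. The idea is to reduce to Theorem~\ref{t1}. The key structural fact is that a circular scaling transformation at $\mathbf{x}_0$ fixes $\mathbf{x}_0$, keeps every great semicircle issuing from $\mathbf{x}_0$ invariant, and maps each circle $O(\mathbf{x}_0,r)$ onto $O(\mathbf{x}_0,\lambda(r))$. It therefore commutes with every rotation $\rho_{\mathbf{x}_0,\alpha}$ about the axis through $\mathbf{x}_0$. A rotation composed with a circular scaling centred at a point sends circles centred at that point to circles centred at its image. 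So in all cases the admissible maps transport the family of circles $\{O(\mathbf{x}_0,r)\}_r$ onto the family $\{O(\mathbf{x}_0',r')\}_{r'}$, preserving the order of radii via the monotone bijection $\lambda$.

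\emph{Step 1 (pointwise isotropy at one point forces it everywhere).} Condition~\ref{cond2} and the continuity in Condition~\ref{cond1} are available for the eventual $L_2$ arguments. The first fact we need is that invariance transports level-set structure. If $C(\mathbf{x}_0,\cdot)$ is constant on some circle $O(\mathbf{x}_0,r)$, then $C(\mathbf{x}_0',\cdot)$ is constant on the image circle $O(\mathbf{x}_0',\lambda(r))$. Conversely, if $C(\mathbf{x}_0,\cdot)$ is \emph{not} constant on $O(\mathbf{x}_0,r)$, then $C(\mathbf{x}_0',\cdot)$ is not constant on the image circle. Since $\lambda$ is a bijection of $[0,\pi]$, isotropy at a single point $\mathbf{x}_0$ transfers to isotropy at every $\mathbf{x}_0'$.

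\emph{Step 2 (excluding the case of no isotropic point).} Suppose $T$ is isotropic at no point. Pick an axis $\mathbf{x}_0$ and a radius $r$ for which $C(\mathbf{x}_0,\cdot)$ varies along $O(\mathbf{x}_0,r)$. Take $\mathbf{x}_0'=\mathbf{x}_0$ and apply invariance with the map sending $\mathbf{x}_0$ to itself. A rotation fixing $\mathbf{x}_0$ must be about that axis, so the admissible maps from $\mathbf{x}_0$ to $\mathbf{x}_0'$ are only "rotation plus radial reparametrisation". This means the angular profile of $C(\mathbf{x}_0',\cdot)$ on its circles equals, up to a rotation about the axis and a reparametrisation of radii, that of $C(\mathbf{x}_0,\cdot)$. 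Now use symmetry, $C(\mathbf{x}_0,\mathbf{x}_0')=C(\mathbf{x}_0',\mathbf{x}_0)$, and apply the correspondence for all pairs. As in Theorem~\ref{t1}, choose $\mathbf{x}_0'$ on $O(\mathbf{x}_0,r)$ at a location where $C(\mathbf{x}_0,\cdot)$ attains its maximum on that circle, and another $\mathbf{x}_0''$ where it attains its minimum. The invariance maps between $\mathbf{x}_0',\mathbf{x}_0''$ and $\mathbf{x}_0$ must carry $\mathbf{x}_0$ to points at distance $\lambda(r)$ whose covariance values coincide with these extremes. Comparing $\max$ and $\min$ over the same circle then yields a contradiction, exactly as the rotation-only argument of Theorem~\ref{t1} does, once radii are matched through $\lambda$.

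\emph{Step 3 (identifying $\lambda$).} In the remaining case, where $T$ is isotropic at every point, Theorem~\ref{th2} gives global isotropy directly, completing the contraposition.

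The main obstacle is Step 2. The rotation-only argument of Theorem~\ref{t1} uses the exact preservation of geodesic distance, whereas here the radius $r$ may be changed to $\lambda(r)$. The key point to establish is that the symmetric relation $C(\mathbf{x}_1,\mathbf{x}_2)=C(\mathbf{x}_2,\mathbf{x}_1)$ forces $\lambda$ to act consistently in both directions. Concretely, applying the invariance map from $\mathbf{x}_1$ to $\mathbf{x}_2$ and then from $\mathbf{x}_2$ back to $\mathbf{x}_1$ must give $\lambda_{21}\circ\lambda_{12}=\mathrm{id}$ on the relevant distance $\gamma(\mathbf{x}_1,\mathbf{x}_2)$. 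I would first try to show that $\lambda(\gamma(\mathbf{x}_1,\mathbf{x}_2))=\gamma(\mathbf{x}_1,\mathbf{x}_2)$ at the points actually used. That reduces the argument to the pure rotation case of Theorem~\ref{t1}.
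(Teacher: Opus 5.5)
Your Steps~1 and~3 are fine under the reading of Definition~\ref{vbn} that the paper also uses: a rotation composed with a circular scaling at the reference point, so that $O(\mathbf{x}_0,r)$ is carried onto $O(\mathbf{x}_0',\lambda(r))$. Step~2, however, carries the entire content of the theorem, and it has a genuine gap: the max/min comparison produces no contradiction.

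Take pure rotations first. Let $\mathbf{x}_0'\in O(\mathbf{x}_0,r)$ maximize $C(\mathbf{x}_0,\cdot)$ on that circle, with value $M_r$. Symmetry gives $C(\mathbf{x}_0',\mathbf{x}_0)=M_r$. Since $C(\mathbf{x}_0',\cdot)$ on $O(\mathbf{x}_0',r)$ is a rotated copy of $C(\mathbf{x}_0,\cdot)$ on $O(\mathbf{x}_0,r)$, $M_r$ is simply its maximum there, and the same holds for the minimizer. With scalings, let $\Phi$ be the invariance map from $\mathbf{x}_0$ to $\mathbf{x}_0'$. Symmetry only tells you that $C(\mathbf{x}_0,\cdot)$ takes the value $C(\mathbf{x}_0,\mathbf{x}_0')$ at $\mathbf{x}_0'\in O(\mathbf{x}_0,r)$ and again at $\Phi^{-1}(\mathbf{x}_0)\in O(\mathbf{x}_0,\lambda^{-1}(r))$. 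This is consistent and does not pin down $\lambda$.

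No argument of this local kind can work. The same bookkeeping goes through verbatim for the planar analogue $B(\|A(\mathbf{x}_1-\mathbf{x}_2)\|)$ of \eqref{aa}. That covariance is homogeneous in exactly this transported sense, with translations in place of rotations. Yet, for a strictly decreasing $B$ and $A$ not a multiple of an orthogonal matrix, it is isotropic at no point. The contradiction must come from the global topology of $\mathbb{S}^2$. You have also misread Theorem~\ref{t1}: there, $\max-\min$ over $O(\mathbf{x}_0,r)$ only supplies the positive length of a tangent vector, and the contradiction comes from the hairy ball theorem.

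The missing idea is the paper's argument.
\begin{itemize}
\item By Theorem~\ref{th2}, pick $\mathbf{x}_0$ and $r$ with $C(\mathbf{x}_0,\cdot)$ non-constant on $O(\mathbf{x}_0,r)$.
\item If the covariance were invariant, transport this angular profile to every $\mathbf{x}$, now on $O(\mathbf{x},\lambda(r))$.
\item At each $\mathbf{x}$, attach a tangent vector of fixed length $\max-\min>0$ in the direction singled out by the transported profile.
\item Continuity of $C$ and of $\lambda$ then gives a continuous nowhere-vanishing tangent field on $\mathbb{S}^2$, contradicting the hairy ball theorem.
\end{itemize}
What makes $\lambda$ harmless is that a circular scaling at $\mathbf{x}$ fixes every geodesic ray from $\mathbf{x}$ and commutes with rotations about $\mathbf{x}$. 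So it changes radii but not directions.

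Your announced main obstacle is therefore a red herring. You do not need $\lambda(\gamma(\mathbf{x}_1,\mathbf{x}_2))=\gamma(\mathbf{x}_1,\mathbf{x}_2)$, and symmetry does not give it anyway. Even with $\lambda=\mathrm{id}$ you would be left with the rotation case, which itself requires the topological step you have not supplied.

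To write that step carefully, note that the direction is determined only modulo the stabilizer of $C(\mathbf{x}_0,\cdot)$ among rotations about $\mathbf{x}_0$. By continuity and non-constancy this stabilizer is a finite cyclic group, so you really obtain a continuous $n$-direction field. Such fields are equally excluded on $\mathbb{S}^2$, because $\chi(\mathbb{S}^2)=2\neq 0$.
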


\begin{remark}
This result shows that isotropic random fields are the only spherical random fields whose covariance functions remain invariant under the class of transformations generated by rotations and circular scalings.
\end{remark}

Spherical random fields play a central role in cosmology, where they are used to model the Cosmic Microwave Background (CMB). A fundamental cosmological question concerns testing Einstein's cosmological principle, which asserts that the Universe is statistically identical in all directions and at all locations. The above results indicate that, if the cosmological principle holds, the Universe cannot exhibit anisotropic statistical behavior.

We next examine the existence of the anisotropic field types defined in Section~\ref{s2}.

\begin{definition}\label{TV}
Let $V(\mathbf{x})$, $\mathbf{x} \in \mathbb{S}^2$, be a continuous deterministic real-valued function, and let $T(\mathbf{x})$, $\mathbf{x} \in \mathbb{S}^2$, be an isotropic second-order random field with zero mean. The $V$-multiplicative transformation of $T$ is defined by
\[
T_V(\mathbf{x}) := V(\mathbf{x})\, T(\mathbf{x}), \qquad \mathbf{x} \in \mathbb{S}^2 .
\]
\end{definition}

\begin{proposition}\label{prop1}
Let $T(\mathbf{x})$, $\mathbf{x} \in \mathbb{S}^2$, be an isotropic random field with a covariance function given by \eqref{eq1} for some function $B(\cdot)$. Let $V(\mathbf{x})$, $\mathbf{x} \in \mathbb{S}^2$, be a deterministic real-valued function that is non-vanishing on $\mathbb{S}^2$ and such that, for every $\mathbf{x}_0 \in \mathbb{S}^2$, there exists a circle $O(\mathbf{x}_0,r)$ (with a radius $r$ possibly depending on $\mathbf{x}_0$) for which $B(r) \neq 0$ and $V(\mathbf{x})$ is not constant on that circle.
Then, the $V$-multiplicatively transformed field $T_V(\mathbf{x})$ is strictly anisotropic.
\end{proposition}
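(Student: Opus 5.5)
The plan is to compute the covariance of the transformed field directly and then test pointwise isotropy at an arbitrary point. Since $V$ is deterministic, the covariance of $T_V$ is
\[
C_V(\mathbf{x}_1,\mathbf{x}_2)=\mathbf{E}\bigl(V(\mathbf{x}_1)T(\mathbf{x}_1)V(\mathbf{x}_2)T(\mathbf{x}_2)\bigr)=V(\mathbf{x}_1)V(\mathbf{x}_2)\,B(\gamma(\mathbf{x}_1,\mathbf{x}_2)),
\]
using \eqref{eq1} for $T$. By Definition~\ref{def_sani}, it suffices to show that $T_V$ is not isotropic at any point in the sense of \eqref{eq 2}. So fix an arbitrary $\mathbf{x}_0\in\mathbb{S}^2$ and argue by contradiction. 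Suppose there is a function $B_{\mathbf{x}_0}$ with $C_V(\mathbf{x}_0,\mathbf{x})=B_{\mathbf{x}_0}(\gamma(\mathbf{x}_0,\mathbf{x}))$ for all $\mathbf{x}$.

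Next, take the radius $r$ supplied by the hypothesis for this $\mathbf{x}_0$, so that $B(r)\neq 0$ and $V$ is not constant on $O(\mathbf{x}_0,r)$. For every $\mathbf{x}\in O(\mathbf{x}_0,r)$ we have $\gamma(\mathbf{x}_0,\mathbf{x})=r$. Pointwise isotropy therefore gives
\[
V(\mathbf{x}_0)\,V(\mathbf{x})\,B(r)=B_{\mathbf{x}_0}(r)\qquad\text{for all }\mathbf{x}\in O(\mathbf{x}_0,r),
\]
which says $C_V(\mathbf{x}_0,\cdot)$ is constant on this circle. The factor $V(\mathbf{x}_0)\neq 0$ because $V$ is non-vanishing, and $B(r)\neq0$ by the choice of $r$. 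Dividing by $V(\mathbf{x}_0)B(r)$ shows that $V$ is constant on $O(\mathbf{x}_0,r)$, contradicting the hypothesis. Since $\mathbf{x}_0$ was arbitrary, $T_V$ is isotropic at no point, i.e. it is strictly anisotropic. The same computation shows that $T_V$ is not isotropic on the particular circle $O(\mathbf{x}_0,r)$ in the sense of Definition~\ref{defca}.

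There is no real analytical obstacle here. The one point requiring care is that both nonvanishing conditions are genuinely used, namely $V(\mathbf{x}_0)\neq0$ and $B(r)\neq0$. Without either one, the constant could be $0$ and the conclusion would fail. I would also remark that $T_V$ satisfies Conditions~\ref{cond1}--\ref{cond2}. Its covariance is continuous when $V$ is continuous, as in Definition~\ref{TV}. It is not identically constant, since otherwise it would in particular be constant on $O(\mathbf{x}_0,r)$, which the argument above rules out.
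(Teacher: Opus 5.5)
Your proposal is correct and follows essentially the same argument as the paper: it computes $C_{T_V}(\mathbf{x}_0,\mathbf{x})=V(\mathbf{x}_0)V(\mathbf{x})B(\gamma(\mathbf{x}_0,\mathbf{x}))$ and uses $V(\mathbf{x}_0)\neq 0$, $B(r)\neq 0$ and the non-constancy of $V$ on $O(\mathbf{x}_0,r)$ to rule out isotropy at every $\mathbf{x}_0$. Your contradiction-and-division phrasing is just a slightly more explicit version of the paper's direct argument.
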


Intuitively, the multiplicative transformation $T_V(\mathbf{x})$ destroys local rotational symmetry whenever $V$ varies along some circle around a point $\mathbf{x}_0$, where the isotropic field $T$ has nonzero covariance. Since this occurs at every point on the sphere, no location retains isotropy, and thus $T_V$ is strictly anisotropic.

A stronger version of Proposition~\ref{prop1} can be obtained under additional assumptions on the function $V(\mathbf{x})$:

\begin{proposition}\label{prop2}
Let $T(\mathbf{x})$, $\mathbf{x} \in \mathbb{S}^2$, be an isotropic random field with a covariance function given by \eqref{eq1} for a strictly positive function $B(\cdot)$. Let $V(\mathbf{x})$, $\mathbf{x} \in \mathbb{S}^2$, be a deterministic non-vanishing real-valued function, which is not constant on every circle $O(\mathbf{x}_0,r)$, for all $\mathbf{x}_0 \in \mathbb{S}^2$ and $r \in (0,\pi)$.
Then, the $V$-multiplicatively transformed field $T_V(\mathbf{x})$ is a super anisotropic random field.
\end{proposition}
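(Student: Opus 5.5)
We argue by contradiction, working directly with the covariance of the transformed field. By Definition~\ref{TV} and the isotropy of $T$ (Remark~\ref{rem2}), the covariance function of $T_V$ factorizes as
\[
C_V(\mathbf{x}_1,\mathbf{x}_2)=\mathbf{E}\big(T_V(\mathbf{x}_1)T_V(\mathbf{x}_2)\big)=V(\mathbf{x}_1)V(\mathbf{x}_2)\,B(\gamma(\mathbf{x}_1,\mathbf{x}_2)).
\]
We will show that no circle $O(\mathbf{x}_0,r)$ with $\mathbf{x}_0\in\mathbb{S}^2$ and $r\in(0,\pi)$ can make $C_V(\mathbf{x}_0,\cdot)$ constant. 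That is exactly the defining property of super anisotropy, read through Definition~\ref{defca}.

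So suppose $T_V$ were isotropic on some circle $O(\mathbf{x}_0,r)$. Then there would be a constant $c$ with $C_V(\mathbf{x}_0,\mathbf{x})=c$ for all $\mathbf{x}\in O(\mathbf{x}_0,r)$. Every such $\mathbf{x}$ lies at geodesic distance $r$ from $\mathbf{x}_0$, so the identity becomes
\[
V(\mathbf{x}_0)\,V(\mathbf{x})\,B(r)=c,\qquad \mathbf{x}\in O(\mathbf{x}_0,r).
\]
Here $V(\mathbf{x}_0)\neq 0$ because $V$ is non-vanishing, and $B(r)>0$ because $B$ is strictly positive. Dividing by the nonzero factor $V(\mathbf{x}_0)B(r)$ gives $V(\mathbf{x})=c/(V(\mathbf{x}_0)B(r))$ on the whole circle $O(\mathbf{x}_0,r)$. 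This contradicts the hypothesis that $V$ is not constant on any circle $O(\mathbf{x}_0,r)$. Since $\mathbf{x}_0$ and $r$ were arbitrary, no circle of isotropy exists, and $T_V$ is super anisotropic.

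I also need to check that $T_V$ satisfies the standing Conditions~\ref{cond1} and~\ref{cond2}; this is routine. The field has zero mean since $T$ does. Its covariance $C_V$ is continuous as a product of continuous factors; this uses the continuity of $V$ from Definition~\ref{TV} and of $B$, which follows from Condition~\ref{cond1} for $T$. Finally, $C_V$ is not identically constant, since otherwise $V$ would be constant on every circle, which the hypothesis rules out.

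The only delicate point is that the factor $B(r)$ must not vanish for every radius simultaneously. In Proposition~\ref{prop1} this is why nonvanishing was required only on one chosen circle per point. Here the strict positivity of $B$ guarantees it for all $r$, which is precisely why the conclusion strengthens from strict to super anisotropy.
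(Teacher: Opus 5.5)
Your proof is correct and is essentially the paper's argument. The paper gives no separate proof of Proposition~\ref{prop2}; it presents it as a strengthening of Proposition~\ref{prop1}, whose proof uses exactly the factorization \eqref{mulcov}, $V(\mathbf{x}_0)\neq 0$, and $B(r)\neq 0$ constant on $O(\mathbf{x}_0,r)$ to show that $C_{T_V}(\mathbf{x}_0,\cdot)$ is not constant there, and strict positivity of $B$ lets this run on every circle, as you observed.
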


For instance, the power exponential covariance function~\cite{Gneiting} can serve as $B(\cdot)$ in Propositions~\ref{prop1} and \ref{prop2}, since it is strictly positive on $\mathbb{S}^2$.

The following example gives a function $V(\cdot)$ satisfying both Propositions~\ref{prop1} and~\ref{prop2}.
\begin{example} \label{ex2}
The function
\begin{equation}\label{V}
  V(\theta, \varphi) = \theta (\pi - \theta)\, \varphi (2 \pi - \varphi) + 1, \quad \theta \in [0, \pi],\ \varphi \in [0, 2\pi),
\end{equation}
is strictly positive and non-constant on every circle $O(\mathbf{x}_0,r),$ $r \in (0,\pi)$ of $\mathbb{S}^2$. Therefore, it satisfies the conditions of both Propositions~\ref{prop1} and \ref{prop2} and can be used to produce strict and super anisotropic random fields.
\end{example}

While Propositions~\ref{prop1} and \ref{prop2} produce classes of fields lacking isotropy at each point or circle, one can also construct anisotropic fields that maintain isotropy on specific geometric structures, such as circles of a fixed radius.

\begin{proposition} \label{prop3}
For any fixed $r_0 \in (0,\pi)$, there exists an anisotropic random field that is not isotropic at any point of $\mathbb{S}^2$, but is isotropic on all circles $O(\mathbf{x},r_0)$, $\mathbf{x} \in \mathbb{S}^2$.
\end{proposition}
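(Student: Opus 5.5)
The plan is to build the field as a $V$-multiplicative transformation $T_V=V\,T$ (Definition~\ref{TV}) of an isotropic field $T$. We choose the covariance $B$ of $T$ so that it vanishes exactly at angular distance $r_0$. Then
\[
C_V(\mathbf{x}_0,\mathbf{x})=V(\mathbf{x}_0)V(\mathbf{x})B(\gamma(\mathbf{x}_0,\mathbf{x}))
\]
is identically $0$ on every circle $O(\mathbf{x}_0,r_0)$. This gives isotropy on all circles of radius $r_0$ in the sense of Definition~\ref{defca}, for every centre. Failure of isotropy at every point then comes from Proposition~\ref{prop1}, applied with a $V$ that is non-constant on every circle.

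\emph{Step 1: constructing $B$.} We need a valid isotropic covariance $B(t)=\sum_\ell \frac{2\ell+1}{4\pi}C_\ell P_\ell(\cos t)$ with $C_\ell\ge 0$, not identically zero, with $B(r_0)=0$ and $B(r)\neq 0$ for most $r$. Fix $r_0\in(0,\pi)$ and set $u=\cos r_0\in(-1,1)$.

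If $P_n(u)=0$ for some $n\ge 1$, take $B(t)=P_n(\cos t)$.

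Otherwise, the values $P_n(u)$, $n\in\mathbb{N}$, change sign infinitely often. This follows from the Laplace asymptotics
\[
P_n(\cos r_0)\sim \sqrt{\tfrac{2}{\pi n\sin r_0}}\,\cos\bigl((n+\tfrac12)r_0-\tfrac{\pi}{4}\bigr),
\]
or more elementarily from the fact that the mean of $P_n(u)$ over $n$ is compatible with $\sum_n P_n(u)z^n=(1-2uz+z^2)^{-1/2}$ staying bounded as $z\to1^-$, while the terms do not all keep one sign. So we pick $n,m$ with $P_n(u)>0>P_m(u)$. By the intermediate value theorem there is $\lambda\in(0,1)$ such that
\[
B(t)=\lambda P_n(\cos t)+(1-\lambda)P_m(\cos t)
\]
satisfies $B(r_0)=0$. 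This $B$ has nonnegative Legendre coefficients, so it is the covariance of a zero-mean isotropic field, for instance a Gaussian field with finitely many nonzero $C_\ell$ in \eqref{spec}. It is a nonzero polynomial in $\cos t$, so $B(r)\neq0$ for all but finitely many $r\in(0,\pi)$. This step, guaranteeing the sign change of $P_n(\cos r_0)$ in $n$, is the only real obstacle, and I would settle it via the oscillatory asymptotics above.

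\emph{Step 2: choosing $V$.} Take $V$ as in Example~\ref{ex2}. It is non-vanishing and non-constant on every circle $O(\mathbf{x}_0,r)$. For every $\mathbf{x}_0$ we can choose $r$ outside the finite zero set of $B$, so the hypotheses of Proposition~\ref{prop1} hold. Hence $T_V$ is strictly anisotropic: it is isotropic at no point, and in particular it is anisotropic.

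\emph{Step 3: checking circle isotropy and the standing assumptions.} For every $\mathbf{x}\in\mathbb{S}^2$ we have $C_V(\mathbf{x},\mathbf{y})=V(\mathbf{x})V(\mathbf{y})B(r_0)=0$ for all $\mathbf{y}\in O(\mathbf{x},r_0)$, so $T_V$ is isotropic on all circles $O(\mathbf{x},r_0)$.

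Finally, Conditions~\ref{cond1}--\ref{cond2} hold. The covariance $C_V$ is continuous as a product of continuous functions; note that $V$ is bounded and continuous away from the coordinate seam, and if needed one replaces it by a smooth non-vanishing function that is non-constant on all circles. Moreover, $C_V$ is not constant, since $C_V(\mathbf{x},\mathbf{x})=V(\mathbf{x})^2B(0)$ varies with $\mathbf{x}$ while $C_V$ vanishes on the circles of radius $r_0$.
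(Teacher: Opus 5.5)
Your construction is correct and uses the paper's mechanism. You multiply an isotropic field by the $V$ of Example~\ref{ex2}, with its covariance chosen to vanish at angular distance $r_0$. The anisotropic part then contributes nothing on the circles $O(\mathbf{x},r_0)$, while Proposition~\ref{prop1}, applied at radii where the covariance is nonzero, excludes isotropy at every point.

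The two routes differ in their ingredients. The paper gives $T_2$ an Askey covariance with support radius $c_2<r_0$, which is positive on $[0,c_2)$ and zero on $[c_2,\pi]$. The vanishing at $r_0$ is then automatic, so your Step~1 is not needed at all; an Askey covariance with support radius exactly $r_0$ would also work.

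Your finite Legendre combination instead needs some $m$ with $P_m(\cos r_0)<0$; pairing it with $P_0\equiv1$ is enough. The Laplace--Heine formula with its $O(n^{-3/2})$ remainder does provide this, since $\cos\bigl((n+\tfrac12)r_0-\tfrac\pi4\bigr)\le-\tfrac12$ for infinitely many $n$. Drop the generating-function remark, though: boundedness of $(1-2uz+z^2)^{-1/2}$ as $z\to1^-$ is consistent with all $P_n(u)>0$, so it proves nothing. In return, your route gives a field with only finitely many nonzero $C_\ell$.

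The paper also adds an independent isotropic $T_1$ with support radius $c_1>r_0$. Its covariance on each $O(\mathbf{x},r_0)$ is therefore the nonzero constant $B_1(r_0)$, and circle isotropy holds for every radius $r\ge c_2$. Yours is identically $0$ on these circles. That satisfies Definition~\ref{defca} as stated, but it is a degenerate instance. Adding such a $T_1$ to your field removes the degeneracy without affecting strict anisotropy, because $B_1(\gamma(\mathbf{x}_0,\cdot))$ is constant on every circle centred at $\mathbf{x}_0$.

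Finally, the hedge in Step~3 is unnecessary. $V$ in \eqref{V} is continuous on all of $\mathbb{S}^2$, since $\varphi(2\pi-\varphi)$ vanishes on both sides of the seam and $\theta(\pi-\theta)$ vanishes at the poles.
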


Finally, if a random field is axially symmetric around the axis passing through a point $\mathbf{x}_0$ and its antipode, then, by Definition~\ref{def_axial}, there exists a function $B(\cdot)$ such that for all $\mathbf{x}_2 \in \mathbb{S}^2$,
$
C(\mathbf{x}_0, \mathbf{x}_2) = C(\mathbf{x}_0, \rho_{\mathbf{x}_0, \alpha} \mathbf{x}_2) = B(\gamma(\mathbf{x}_0, \mathbf{x}_2)).
$
Thus, axially symmetric fields form a subclass of fields that are isotropic at a point. The following result clarifies the variance structure of axially symmetric random fields. It illustrates a weaker form of isotropy at a point that holds for second-order moments, even when the field may be anisotropic. It immediately follows from Definition~\ref{def_axial}, since, for all $\mathbf{x} \in \mathbb{S}^2$ and $\alpha \in [0, 2\pi)$, it holds that
\[
\mathrm{Var}(T(\mathbf{x}))
= C(\mathbf{x}, \mathbf{x})
= C(\rho_{\mathbf{x}_0,\alpha}\mathbf{x},\, \rho_{\mathbf{x}_0,\alpha}\mathbf{x})
= \mathrm{Var}(T(\rho_{\mathbf{x}_0,\alpha}\mathbf{x})).
\]

\begin{proposition} \label{pp}
The variance of an axially symmetric random field, with an axis of symmetry passing through $\mathbf{x}_0$, is constant on each circle $O(\mathbf{x}_0, r)$, for all $r \in (0, \pi)$.
\end{proposition}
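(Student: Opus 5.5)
The plan is to obtain Proposition~\ref{pp} directly from Definition~\ref{def_axial} by specialising the covariance identity to the diagonal $\mathbf{x}_1=\mathbf{x}_2$. The only work beyond that is a geometric observation: the rotations $\rho_{\mathbf{x}_0,\alpha}$, $\alpha\in[0,2\pi)$, act transitively on each circle $O(\mathbf{x}_0,r)$.

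\emph{Step 1.} Fix $r\in(0,\pi)$ and two points $\mathbf{x},\mathbf{y}\in O(\mathbf{x}_0,r)$. Each rotation $\rho_{\mathbf{x}_0,\alpha}$ fixes $\mathbf{x}_0$ and is an isometry for the great-circle distance, so
\[
\gamma(\mathbf{x}_0,\rho_{\mathbf{x}_0,\alpha}\mathbf{x})=\gamma(\rho_{\mathbf{x}_0,\alpha}\mathbf{x}_0,\rho_{\mathbf{x}_0,\alpha}\mathbf{x})=\gamma(\mathbf{x}_0,\mathbf{x})=r .
\]
Hence the orbit of $\mathbf{x}$ lies in $O(\mathbf{x}_0,r)$. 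Conversely, rotate coordinates so that $\mathbf{x}_0$ is the pole $\mathbf{0}$. Then $O(\mathbf{x}_0,r)$ becomes the parallel $\{\theta=r\}$, and $\rho_{\mathbf{x}_0,\alpha}$ acts by $\varphi\mapsto\varphi+\alpha \pmod{2\pi}$. Taking $\alpha=\varphi_{\mathbf{y}}-\varphi_{\mathbf{x}} \pmod{2\pi}$ gives $\rho_{\mathbf{x}_0,\alpha}\mathbf{x}=\mathbf{y}$. This step is the only place where any care is needed, and it is elementary.

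\emph{Step 2.} Apply Definition~\ref{def_axial} with $\mathbf{x}_1=\mathbf{x}_2=\mathbf{x}$ and the $\alpha$ from Step~1:
\[
\mathrm{Var}(T(\mathbf{y}))=C(\mathbf{y},\mathbf{y})=C(\rho_{\mathbf{x}_0,\alpha}\mathbf{x},\rho_{\mathbf{x}_0,\alpha}\mathbf{x})=C(\mathbf{x},\mathbf{x})=\mathrm{Var}(T(\mathbf{x})).
\]
This is exactly the chain of equalities displayed before the statement. Since $\mathbf{x},\mathbf{y}$ were arbitrary points of the circle, the variance is constant on $O(\mathbf{x}_0,r)$ for every $r\in(0,\pi)$.

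There is no genuine obstacle. The zero-mean assumption from Condition~\ref{cond1} identifies $C(\mathbf{x},\mathbf{x})$ with the variance, and no continuity or spectral arguments are needed. One could also remark that the variance is then a function of $\gamma(\mathbf{x}_0,\mathbf{x})$ alone, but that is not required by the statement.
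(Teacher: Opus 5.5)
Your proposal is correct and follows the paper's argument: specialise the invariance in Definition~\ref{def_axial} to the diagonal $\mathbf{x}_1=\mathbf{x}_2$, which gives $\mathrm{Var}(T(\mathbf{x}))=\mathrm{Var}(T(\rho_{\mathbf{x}_0,\alpha}\mathbf{x}))$. Your Step~1, showing that the rotations about the axis act transitively on each circle $O(\mathbf{x}_0,r)$, only spells out a point the paper leaves implicit.
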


\section{On admissible isotropic locations}
If a random field is isotropic, then it is isotropic at every point of $\mathbb{S}^2$ and invariant under all rotations. By contrast, the previous section demonstrated the existence of strictly anisotropic random fields that admit no points of isotropy. Consequently, a natural question arises about intermediate cases. Specifically, whether for any set $J\subset\mathbb{S}^2$ there exists a random field $T(\mathbf{x})$ that is isotropic at each point of $J$ and anisotropic on its complement, $\mathbb{S}^2 \setminus J.$ In particular, for isotropic random fields one has $J = \mathbb{S}^2$, while for strictly anisotropic fields $J = \varnothing$.

Since our attention is restricted to random fields with continuous covariance functions, for simplicity it is assumed throughout that the set $J$ is compact and connected. For a set $A \subset \mathbb{S}^2$, define the function $d(\mathbf{x},A)$, $\mathbf{x} \in \mathbb{S}^2$, as the shortest angular distance from $\mathbf{x}$ to $A$,
\[
d(\mathbf{x}, A) := \inf_{\mathbf{x}^{\prime} \in A} \gamma(\mathbf{x}, \mathbf{x}^{\prime}).
\]
If $A$ is compact, this function is continuous and  $\inf$ may be replaced by $\min$.

The next auxiliary result is also of independent interest, as it provides necessary and sufficient conditions for pointwise isotropy of the following composite random field.
\begin{lemma}\label{op} Let us consider the field
\begin{equation}\label{sumT}
T(\mathbf{x}) := T_1(\mathbf{x}) + d(\mathbf{x}, J) \cdot T_2(\mathbf{x}), \quad\mathbf{x} \in \mathbb{S}^2, \end{equation}
where $T_{1}(\cdot)$ and $T_{2}(\cdot)$ are two independent isotropic random fields, such that $T_2$ has a strictly positive covariance function.
Then the field $T(\mathbf{x})$ is isotropic at a point $\mathbf{x}_0 \in \mathbb{S}^2 \setminus J$ if and only if the set $\mathbb{S}^2 \setminus J$ (and thus $J$ itself) is rotationally invariant about the axis determined by the antipodal points $\mathbf{x}_0$ and $\mathbf{x}_0^{\prime}$.
\end{lemma}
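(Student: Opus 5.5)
Write $B_1$ and $B_2$ for the covariance kernels of $T_1$ and $T_2$ in the sense of \eqref{eq1}; by assumption $B_2>0$ on $[0,\pi]$. Since $T_1$ and $T_2$ are independent and centred, the cross terms vanish and
\[
C(\mathbf{x}_0,\mathbf{x}) = B_1(\gamma(\mathbf{x}_0,\mathbf{x})) + d(\mathbf{x}_0,J)\, d(\mathbf{x},J)\, B_2(\gamma(\mathbf{x}_0,\mathbf{x})).
\]
The plan is to reduce isotropy at $\mathbf{x}_0$ to a statement about the function $d(\cdot,J)$ on circles centred at $\mathbf{x}_0$. The first term depends only on $\gamma(\mathbf{x}_0,\mathbf{x})$. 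Hence $T$ is isotropic at $\mathbf{x}_0$ if and only if the second term is constant on every circle $O(\mathbf{x}_0,r)$, $r\in(0,\pi)$.

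Fix $\mathbf{x}_0\in\mathbb{S}^2\setminus J$. Since $J$ is compact, $d(\mathbf{x}_0,J)>0$, and $B_2(r)>0$ for every $r$. Dividing by these factors, isotropy at $\mathbf{x}_0$ is equivalent to the following statement:
\[
(\ast)\qquad d(\cdot,J) \text{ is constant on each circle } O(\mathbf{x}_0,r),\quad r\in(0,\pi).
\]
This is the only place where $\mathbf{x}_0\notin J$ and the strict positivity of $B_2$ are used.

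\emph{Sufficiency.} Suppose $J$ is invariant under all rotations $\rho_{\mathbf{x}_0,\alpha}$. Each such rotation is an isometry of $\mathbb{S}^2$, so $d(\rho_{\mathbf{x}_0,\alpha}\mathbf{x},J)=d(\mathbf{x},\rho_{\mathbf{x}_0,\alpha}^{-1}J)=d(\mathbf{x},J)$. These rotations act transitively on each circle $O(\mathbf{x}_0,r)$, so $(\ast)$ holds and $T$ is isotropic at $\mathbf{x}_0$.

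\emph{Necessity.} Assume $(\ast)$ holds. The function $d(\cdot,J)$ is continuous, and by $(\ast)$ it is invariant under every rotation $\rho_{\mathbf{x}_0,\alpha}$. This also holds at $\mathbf{x}_0$ and at the antipode $\mathbf{x}_0^{\prime}$, since these points are fixed by the rotations. Because $J$ is compact, it is closed, so
\[
J=\{\mathbf{x}:\ d(\mathbf{x},J)=0\}.
\]
The zero set of a rotation-invariant function is itself rotation-invariant. Therefore $J$, and its complement $\mathbb{S}^2\setminus J$, are invariant under all rotations about the axis through $\mathbf{x}_0$ and $\mathbf{x}_0^{\prime}$.

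The main point needing care is recovering $J$ from $d(\cdot,J)$, which uses closedness of $J$. One must also check that $(\ast)$ is needed only for $r\in(0,\pi)$, with the endpoints $r=0,\pi$ trivially invariant. Connectedness of $J$ is not needed for this argument. A degenerate case is $J=\varnothing$, where $d(\cdot,J)$ is infinite or undefined. I would either exclude it or adopt a convention such as $d\equiv\pi$, in which case both sides of the equivalence hold trivially.
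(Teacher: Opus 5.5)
Your proof is correct and takes essentially the same route as the paper. The covariance splits as $C_{T_1}+d(\mathbf{x}_0,J)\,d(\mathbf{x},J)\,C_{T_2}$, sufficiency follows from rotation invariance of $d(\cdot,J)$, and necessity rests on $d(\cdot,J)$ vanishing exactly on the closed set $J$ and being positive off it, which with $d(\mathbf{x}_0,J)>0$ and $C_{T_2}>0$ rules out a circle about $\mathbf{x}_0$ meeting both $J$ and its complement. Your intermediate reduction to constancy of $d(\cdot,J)$ on circles, and your explicit use of $d(\mathbf{x}_0,J)>0$ (which the paper leaves implicit), are a slightly cleaner packaging of the same argument.
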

Since $J$ is compact, the construction in~\eqref{sumT} yields a continuous covariance function. Lemma~\ref{op} shows that pointwise isotropy at $\mathbf{x}_0$ is entirely determined by the geometry of $J$ as viewed from $\mathbf{x}_0$.

Using the construction of Lemma~\ref{op}, one can explicitly construct a field that is isotropic precisely on a prescribed compact set $J$.
\begin{theorem} \label{prop4}
For any compact set $J \subset \mathbb{S}^2,$ there exists a random field $T(\mathbf{x}),$ that is isotropic at each point of $J$ and anisotropic on $\mathbb{S}^2 \setminus J.$
\end{theorem}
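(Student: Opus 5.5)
The plan is to reuse the construction \eqref{sumT} and to read off isotropy from Lemma~\ref{op}, with a separate repair for the few exceptional points the lemma leaves. The trivial cases go first. If $J=\mathbb{S}^2$, any isotropic field works. If $J=\varnothing$, the strictly anisotropic field of Proposition~\ref{prop1}, with $V$ from Example~\ref{ex2}, works. So assume $J$ is a nonempty proper compact subset. Take $T_1,T_2$ independent isotropic fields with covariances $B_1(\gamma)$ and $B_2(\gamma)$, where $B_2>0$ (for instance the power exponential covariance), and set $T(\mathbf{x})=T_1(\mathbf{x})+d(\mathbf{x},J)T_2(\mathbf{x})$. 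By independence,
\[
C(\mathbf{x}_1,\mathbf{x}_2)=B_1(\gamma(\mathbf{x}_1,\mathbf{x}_2))+d(\mathbf{x}_1,J)\,d(\mathbf{x}_2,J)\,B_2(\gamma(\mathbf{x}_1,\mathbf{x}_2)),
\]
which is continuous because $d(\cdot,J)$ is continuous for compact $J$. If $\mathbf{x}_0\in J$, then $d(\mathbf{x}_0,J)=0$, so $C(\mathbf{x}_0,\mathbf{x})=B_1(\gamma(\mathbf{x}_0,\mathbf{x}))$ and $T$ is isotropic at $\mathbf{x}_0$. The field is not globally isotropic, since a point outside $J$ has a different covariance profile from a point of $J$; this also follows from the next step.

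For $\mathbf{x}_0\notin J$, Lemma~\ref{op} says $T$ is isotropic at $\mathbf{x}_0$ exactly when $J$ is invariant under all rotations about the axis through $\mathbf{x}_0$. The next step bounds how many such bad points there can be. Suppose $J$ were invariant under the full rotation groups about two distinct axes. These two circle subgroups generate a subgroup of $SO(3)$ that acts transitively on $\mathbb{S}^2$: composing rotations about two axes carries any point to any other. A nonempty $J$ invariant under a transitive group is all of $\mathbb{S}^2$, which we have excluded. Hence at most one axis leaves $J$ invariant, and the construction fails on at most the two antipodal points $\pm\mathbf{p}$ of that axis, and only on those of them lying outside $J$. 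When $J$ has no axial symmetry, the proof is already complete.

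The main obstacle is the axially symmetric case, where $J$ is invariant about the axis through $\pm\mathbf{p}$ (for example $J$ a closed cap centered at $\mathbf{p}$). The first repair to try is to break the axial symmetry with a multiplier: replace $d(\mathbf{x},J)$ by $f(\mathbf{x})=d(\mathbf{x},J)\,V(\mathbf{x})$ with $V(\mathbf{x})=\exp(\varepsilon\langle\mathbf{x},\mathbf{u}\rangle)$, where $\mathbf{u}\perp\mathbf{p}$. Then $f$ still vanishes exactly on $J$, so isotropy on $J$ survives. The argument of Lemma~\ref{op} then gives the following criterion at any $\mathbf{x}_0\notin J$: because $f(\mathbf{x}_0)>0$ and $B_2>0$, $T$ is isotropic at $\mathbf{x}_0$ iff $f$ is constant on every circle $O(\mathbf{x}_0,r)$. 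At $\pm\mathbf{p}$ this fails: $d(\cdot,J)$ is constant and positive on some circle around $\mathbf{p}$ by the axial symmetry of $J$, while $V$ is not constant on it. What remains to check is that $f$ is not constant on all circles about any other $\mathbf{x}_0\notin J$. I would argue this by noting that a nonempty zero set of $f$ that is a union of circles about $\mathbf{x}_0$ forces $J$ to be axially symmetric about $\mathbf{x}_0$'s axis, so $\mathbf{x}_0=\pm\mathbf{p}$.

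If this zero-set argument has a gap, the fallback is to add a third independent term $g(\mathbf{x})T_3(\mathbf{x})$. Here $g$ vanishes on $J$, is positive near $\pm\mathbf{p}$ (when these lie outside $J$), and is not constant on small circles about $\pm\mathbf{p}$. The covariance then contains a further nonnegative summand $g(\mathbf{x}_0)g(\mathbf{x})B_3$, which is not constant on circles about $\pm\mathbf{p}$, so isotropy there is destroyed. The remaining delicate part is to choose $g$ so that it does not accidentally create isotropy at points where the original field was anisotropic.
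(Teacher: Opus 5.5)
Your proposal is correct and follows essentially the paper's proof: Lemma~\ref{op} handles every $J$ with no axis of rotational symmetry, and in the axially symmetric case the paper likewise multiplies $d(\mathbf{x},J)$ by a positive symmetry-breaking factor. The paper uses $1+\frac{(\pi-\theta)\theta}{\pi^2}\cos\varphi$ with the axis placed at the pole, where you use $\exp(\varepsilon\langle\mathbf{x},\mathbf{u}\rangle)$. Your zero-set argument is already complete (it is exactly the ``analogous argument to Lemma~\ref{op}'' the paper invokes for the non-polar points), so the fallback with a third field is unnecessary. Your separate treatment of $J=\emptyset$, where $d(\cdot,J)$ is undefined, and your justification that the symmetry axis is unique fill in points the paper leaves implicit.
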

This result shows that pointwise isotropy imposes no global structural constraints on the field: any compact set $J$ may be chosen. As an isotropic random field is not only pointwise isotropic but also axially symmetric with respect to each axis passing through a point and its antipode, a similar question can be posed regarding axial symmetry. In contrast to Theorem~\ref{prop4}, requiring axial symmetry imposes stronger geometric restrictions,  as shown by the following result.
\begin{theorem} \label{yu}
An axially symmetric random field that is not isotropic has a single axis of symmetry.
\end{theorem}

\section{On composite random fields}\label{sec:composite}

This section introduces composite random fields, defined as random fields obtained from an isotropic field via deterministic convolution, pointwise multiplication, or space deformation approaches.

\subsection{Multiplicative models}
First, we investigate $V$-multiplicative transformations of isotropic random fields. Recall that these transformations involve the pointwise multiplication $T_V(\mathbf{x}) = V(\mathbf{x}) T(\mathbf{x})$ of an isotropic random field $T(\cdot)$ and a deterministic function $V(\cdot)$, referred to as a multiplicative filter.

The following result provides a characterization of the isotropy and anisotropy properties of the multiplicative field $T_V(\cdot)$.

\begin{theorem}\label{th4}
Let $T(\mathbf{x}),$ $\mathbf{x} \in \mathbb{S}^2,$ be an isotropic random field with a covariance function given by~\eqref{eq1} with a strictly positive function $B(\cdot).$
Then,
\begin{enumerate}
\item[\rm(1)] if $V(\mathbf{x})$ is constant on $\mathbb{S}^2,$ then $T_V(\cdot)$ is isotropic;
\item[\rm(2)] if $V(\mathbf{x}_0)=0,$ then $T_V$ is isotropic at the point $\mathbf{x}_0;$
\item[\rm(3)] if there exists a point $\mathbf{x}_0$  such that
$
V(\mathbf{x})\neq 0,$ for
all $\mathbf{x}\in\mathbb{S}^2\setminus\{\mathbf{x}_0\},
$
$V(\mathbf{x})$ is constant on every circle $O(\mathbf{x}_0,r)$, $r\in(0,\pi)$, and there are two distinct radii
$r_0^{\prime}\neq r_0^{\prime\prime}$ for which these constants are different, then $T_V(\cdot)$ is isotropic precisely at the antipodal points $\mathbf{x}_0$ and $\mathbf{x}_0^{\prime}$;

\item[\rm(4)] if $V(\mathbf{x})\neq 0$ for all $\mathbf{x}\in\mathbb{S}^2$, and no pair of antipodal points satisfies the conditions in {\rm(3)}, then $T_V(\cdot)$ is strictly anisotropic.
\end{enumerate}
\end{theorem}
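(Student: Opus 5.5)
The whole argument rests on the covariance of the multiplicative field,
\[
C_V(\mathbf{x}_1,\mathbf{x}_2)=V(\mathbf{x}_1)V(\mathbf{x}_2)\,B(\gamma(\mathbf{x}_1,\mathbf{x}_2)),
\]
which follows at once from Definition~\ref{TV} because $V$ is deterministic. The four items are then handled one at a time, each by checking when $C_V(\mathbf{x}_0,\cdot)$ can be written as a function of $\gamma(\mathbf{x}_0,\cdot)$.

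Items (1) and (2) are direct.
\begin{itemize}
\item[(1)] If $V\equiv c$, then $C_V=c^2B(\gamma)$, which is of the form \eqref{eq1}, so Remark~\ref{rem2} gives isotropy.
\item[(2)] If $V(\mathbf{x}_0)=0$, then $C_V(\mathbf{x}_0,\mathbf{x})\equiv 0$, which is trivially a function of the distance, namely $B_{\mathbf{x}_0}\equiv 0$.
\end{itemize}

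For (3) and (4) I use one characterisation. Fix a point $\mathbf{y}$ with $V(\mathbf{y})\neq 0$. Since $B>0$, the function $C_V(\mathbf{y},\mathbf{x})$ is constant on the circle $O(\mathbf{y},r)$ if and only if $V$ is constant on $O(\mathbf{y},r)$. Hence $T_V$ is isotropic at $\mathbf{y}$ if and only if $V$ is constant on every circle $O(\mathbf{y},r)$, $r\in(0,\pi)$. This is the same argument as in Propositions~\ref{prop1}--\ref{prop2}, and the endpoints $r=0,\pi$ are single points and impose nothing.

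\begin{itemize}
\item[(4)] Assume $V$ vanishes nowhere and $T_V$ were isotropic at some $\mathbf{y}$. Then $V$ is constant on all circles about $\mathbf{y}$. Either $\mathbf{y}$ together with its antipode satisfies the hypotheses of (3), which is excluded, or all these circle constants coincide. In the latter case $V$ is constant on $\mathbb{S}^2$ by continuity, which also covers the two poles. That constant case has to be handled separately (it is really item (1)). I interpret the hypothesis of (4) as implicitly excluding a constant $V$, since otherwise $T_V$ is isotropic and the claim fails. I will state this caveat explicitly.
\item[(3), isotropic points.] Isotropy at $\mathbf{x}_0$ follows from the characterisation when $V(\mathbf{x}_0)\ne0$, and from (2) when $V(\mathbf{x}_0)=0$. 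At the antipode $\mathbf{x}_0'$ the circles coincide, $O(\mathbf{x}_0',r)=O(\mathbf{x}_0,\pi-r)$. So $V$ is constant on them, and the same dichotomy applies.
\end{itemize}

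The main obstacle is the converse in (3): showing that $T_V$ is isotropic at no other point $\mathbf{y}\notin\{\mathbf{x}_0,\mathbf{x}_0'\}$. Write $V(\mathbf{x})=g(\gamma(\mathbf{x}_0,\mathbf{x}))$ with $g$ non-constant on $(0,\pi)$, and note $V(\mathbf{y})\ne0$. Suppose, for contradiction, that $V$ is also constant on every circle about $\mathbf{y}$. The plan is to show that $V$ is then invariant under the group generated by rotations about the $\mathbf{x}_0$-axis and rotations about the $\mathbf{y}$-axis. Because the two axes are distinct, this group acts transitively on $\mathbb{S}^2$: its closure is a closed subgroup of $SO(3)$ of dimension at least $2$, hence all of $SO(3)$. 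Continuity of $V$ then forces $V$ to be constant, contradicting $g(r_0')\ne g(r_0'')$. If I want to avoid the Lie-group step, I would give a direct path argument instead. Starting from any point, move alternately along circles about $\mathbf{x}_0$ and about $\mathbf{y}$; $V$ stays constant along the way, and this connects a circle $O(\mathbf{x}_0,r)$ to nearby radii, so $g$ is locally constant on $(0,\pi)$ and therefore constant. I would check this local step carefully. Continuity of $V$ is needed here and is implicit in Condition~\ref{cond1}.
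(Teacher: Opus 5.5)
Your argument is correct. Items (1), (2) and the isotropy at $\mathbf{x}_0,\mathbf{x}_0'$ in (3) match the paper. The key step of (3), showing there is no isotropy at any other point, is done differently.

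\textbf{The paper's route.} It works directly with circles. Write $V=g(\gamma(\mathbf{x}_0,\cdot))$ with $g$ non-constant. The paper first shows that there are radii $r_0',r_0''$ arbitrarily close to each other with $g(r_0')\neq g(r_0'')$. Then, for $\mathbf{x}_1\notin\{\mathbf{x}_0,\mathbf{x}_0'\}$, it takes one circle $O(\mathbf{x}_1,r_1)$ that meets both $O(\mathbf{x}_0,r_0')$ and $O(\mathbf{x}_0,r_0'')$. This is possible because a circle about $\mathbf{x}_1$ sweeps a whole interval of distances from $\mathbf{x}_0$, of length up to $2\min(d,\pi-d)$ with $d=\gamma(\mathbf{x}_0,\mathbf{x}_1)$. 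On that circle $C_V(\mathbf{x}_1,\cdot)=V(\mathbf{x}_1)B(r_1)V(\cdot)$ takes two different values. This is essentially your fallback path argument run contrapositively. It is elementary and uses neither continuity of $V$ nor Lie theory.

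\textbf{Your main route.} $V$ is invariant under rotations about two non-coaxial axes. The closure of the generated group has dimension at least $2$, hence equals $SO(3)$, because the Lie algebra of $SO(3)$ has no two-dimensional subalgebras. So $V$ is constant. This is shorter and more conceptual, and it is the same mechanism the paper itself uses for Theorem~\ref{yu}. The price is the closed-subgroup theorem and continuity of $V$. The latter is legitimate: Definition~\ref{TV} assumes $V$ continuous explicitly, so you do not need to appeal to Condition~\ref{cond1}.

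\textbf{Your caveat on (4).} It is justified, and the paper's proof passes over this point. A nonzero constant $V$ vanishes nowhere, and no antipodal pair satisfies (3), since there are no two radii with different constants. Yet by (1) $T_V$ is isotropic. So (4) needs the extra hypothesis that $V$ is non-constant. With that hypothesis your dichotomy argument is complete: isotropy at $\mathbf{y}$ forces either the hypotheses of (3) at $\mathbf{y},\mathbf{y}'$ or a constant $V$. The paper only says that (4) ``follows directly from the above reasons''.
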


\begin{remark}
The assumption of strict positivity of $B(\cdot)$ in Theorem~\ref{th4} ensures that the isotropic base field $T(\cdot)$ contributes nontrivially to the covariance function of the $V$-multiplicative field $T_V(\cdot)$. This assumption can be relaxed: the same conclusions hold if $B(\cdot)$ vanishes only at a finite number of points or on a measure-zero set.
\end{remark}

\begin{example} \label{e3}
Consider the deterministic function $V(\mathbf{x})=V(\theta,\varphi) = \theta(\pi - \theta)+1$ which depends only on $\theta$. Figure~\ref{fig2a} shows a realization of $T_V(\mathbf{x})$ constructed using the isotropic random field $T(\mathbf{x})$ from Example~\ref{ex1}.
\begin{figure}[!hbt]
  \centering
  \begin{subfigure}{0.53\textwidth}
    \centering
    \includegraphics[width=\textwidth, trim={0 0 0 0}, clip]{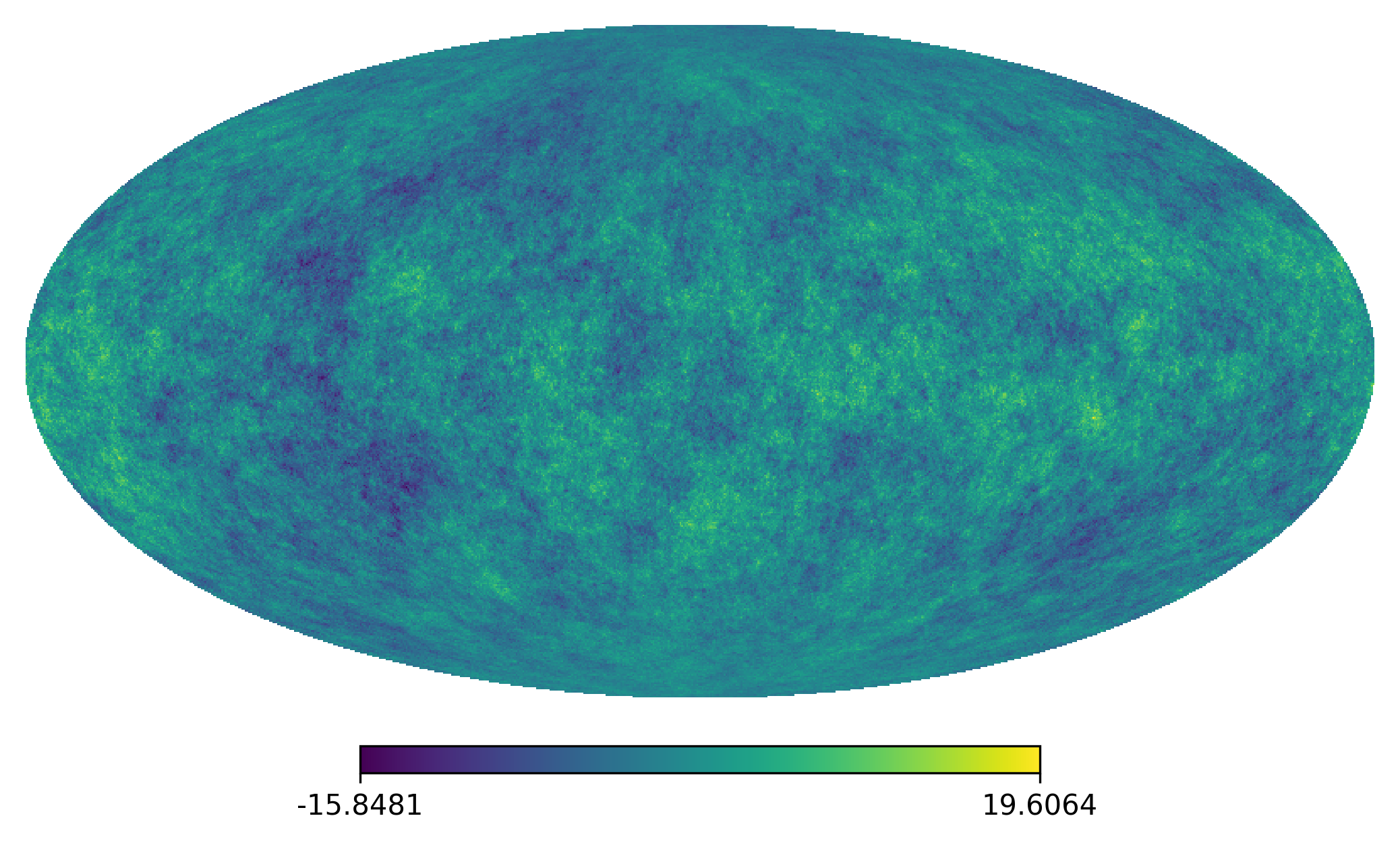}\\[0.2cm]
    \caption{\it Realization of  the field}
    \label{fig2a}
  \end{subfigure}%
  \begin{subfigure}{0.61\textwidth}
    \centering
    \includegraphics[width=\textwidth, trim={4cm 0 0 5.5cm}, clip]{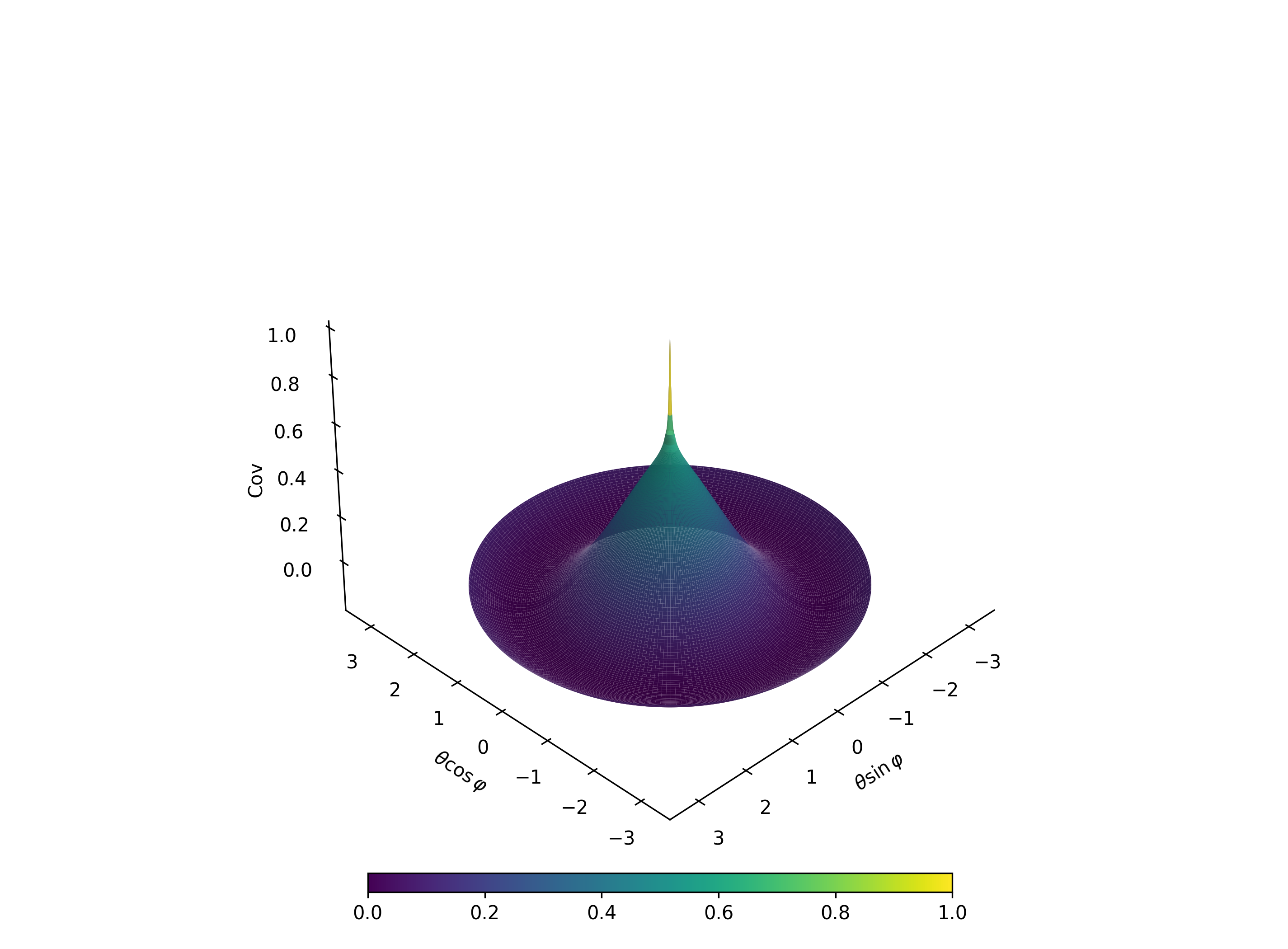}
    \captionsetup{justification=raggedright, singlelinecheck=false, margin=0cm}
    \caption{\it Covariance function  $\text{Cov}(T_V(\mathbf{0}),T_V(\mathbf{x}))$}
    \label{fig2b}
  \end{subfigure}
  \caption{\it Anisotropic field $T_V(\mathbf{x})$ which is isotropic at $\mathbf{0},$ $V(\theta,\varphi) = \theta(\pi - \theta)+1.$ }
  \label{fig2}\end{figure}

As expected, since $V(\mathbf{x}) \ge 1,$ due to multiplication by non-negative values, the realizations in Figure~\ref{fig1a} and~\ref{fig2a} exhibit an identical pattern of "cold" and "hot" regions. However, the ranges of fields values differ, as $ \max_{\theta \in [0,\pi]} V({\theta}) = 1 + \pi^2/4 \approx 3.467.$ By Theorem~\ref{th4} (Case~{\rm(3)}), since $V(\mathbf{x})>0,$ takes constant values on each circle $O(\mathbf{0}, r),$ $r \in (0, \pi),$ and these values differ for some radii, then $T_V(\mathbf{x})$ is isotropic only at two antipodal points, the poles $\mathbf{0}$ and $\mathbf{0}^{\prime}$.  Figure~\ref{fig2b} illustrates isotropy at $\mathbf{0}$: the covariance function $\text{Cov}(T_V(\mathbf{0}),T_V(\mathbf{x}))$ is axially symmetric around the vertical axis. The figure demonstrates that the plotted covariance function of the transformed field $T_V(\mathbf{x})$ takes higher values than the corresponding covariance function of the base field $T(\mathbf{x})$ in Figure~\ref{fig1b}. This is expected because $V(\mathbf{0})=1$, and, therefore, two covariance functions differ only by the multiplier $V(\mathbf{x})\ge 1$.
\end{example}

\begin{example} \label{e44}
Consider a deterministic multiplication function $V(\mathbf{x})=V(\theta,\varphi) = \theta(\pi - \theta)\varphi(2\pi - \varphi) + 1$ from Example~\ref{ex2} and the isotropic random field $T(\mathbf{x})$ from Example~\ref{ex1}. Figure~\ref{fig3a} shows a realization of $T_V(\mathbf{x}).$ In the central part, the values of $\varphi$ are close to 0, and the multiplication function $V(\mathbf{x})$ is close to the constant value 1, regardless of the values of $\theta.$ In areas far from the center, $\varphi$ increases and $\max_{x} V(\mathbf{x}) = \pi^4/4 + 1 \approx 25.352.$ Therefore, the pattern of "cold" and "hot" regions is the same as in Figure~\ref{fig1a}, but the color of the central part in Figure~\ref{fig3a} is "blurred" because it now has substantially smaller values, which results in smaller color gradient changes.

Since $V(\mathbf{x})>0$ and it is not constant on any circles, Case~{\rm(4)} of Theorem~\ref{th4} implies that $T_V(\mathbf{x})$ is strictly anisotropic. Figure~\ref{fig3b} visualizes this by plotting the covariance function $\text{Cov}(T_V(\mathbf{0}),T_V(\mathbf{x}))$, which does not display rotational symmetry.

\begin{figure}[!htb]
  \centering
  \begin{subfigure}{0.53\textwidth}
    \centering
    \includegraphics[width=\textwidth, trim={1 0 0 0}, clip]{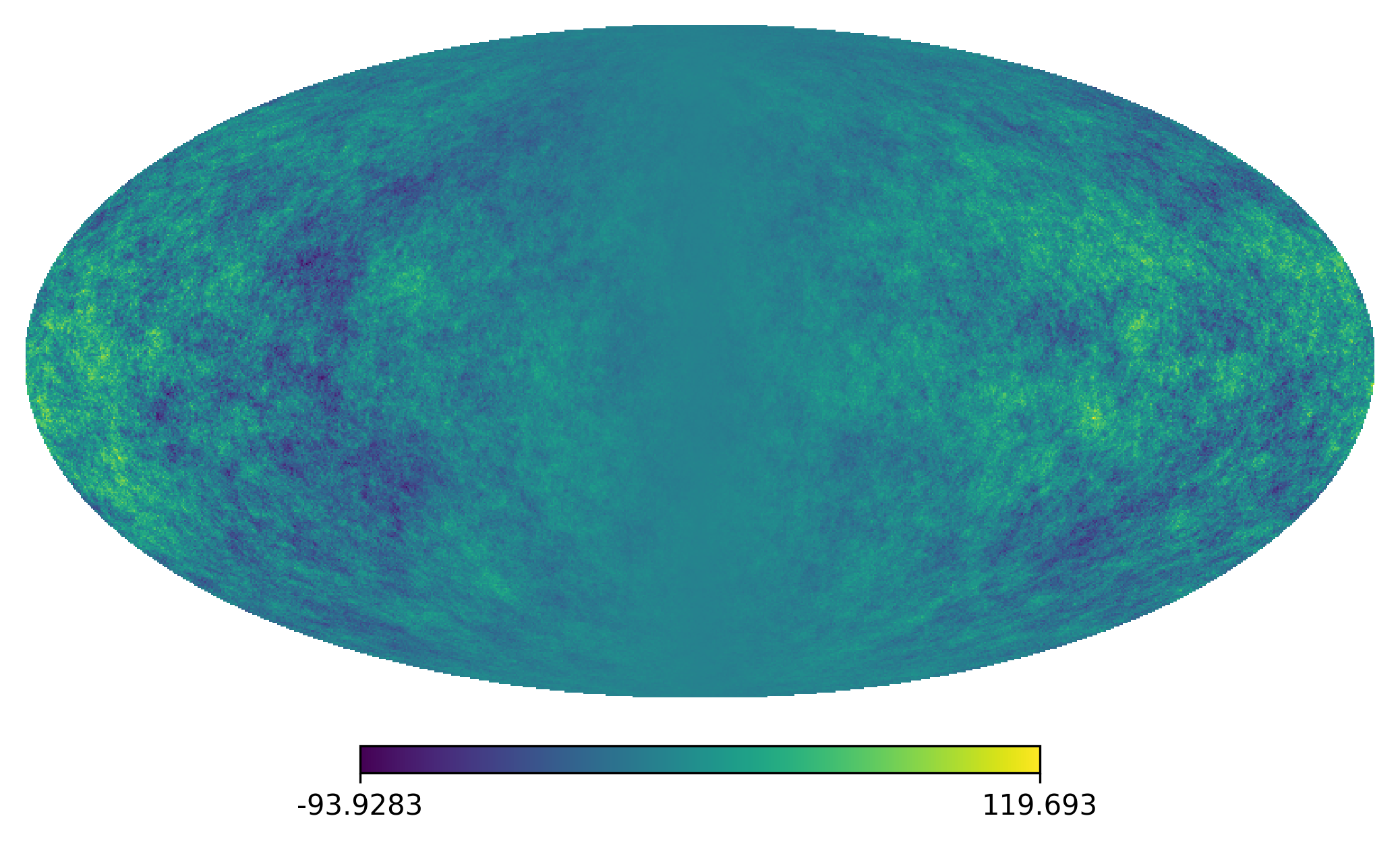}\\[0.2cm]
    \caption{\it Realization of the field}\label{fig3a}
  \end{subfigure}%
  \begin{subfigure}{0.61\textwidth}
    \centering
    \includegraphics[width=\textwidth, trim={4cm 0 0 5.5cm}, clip]{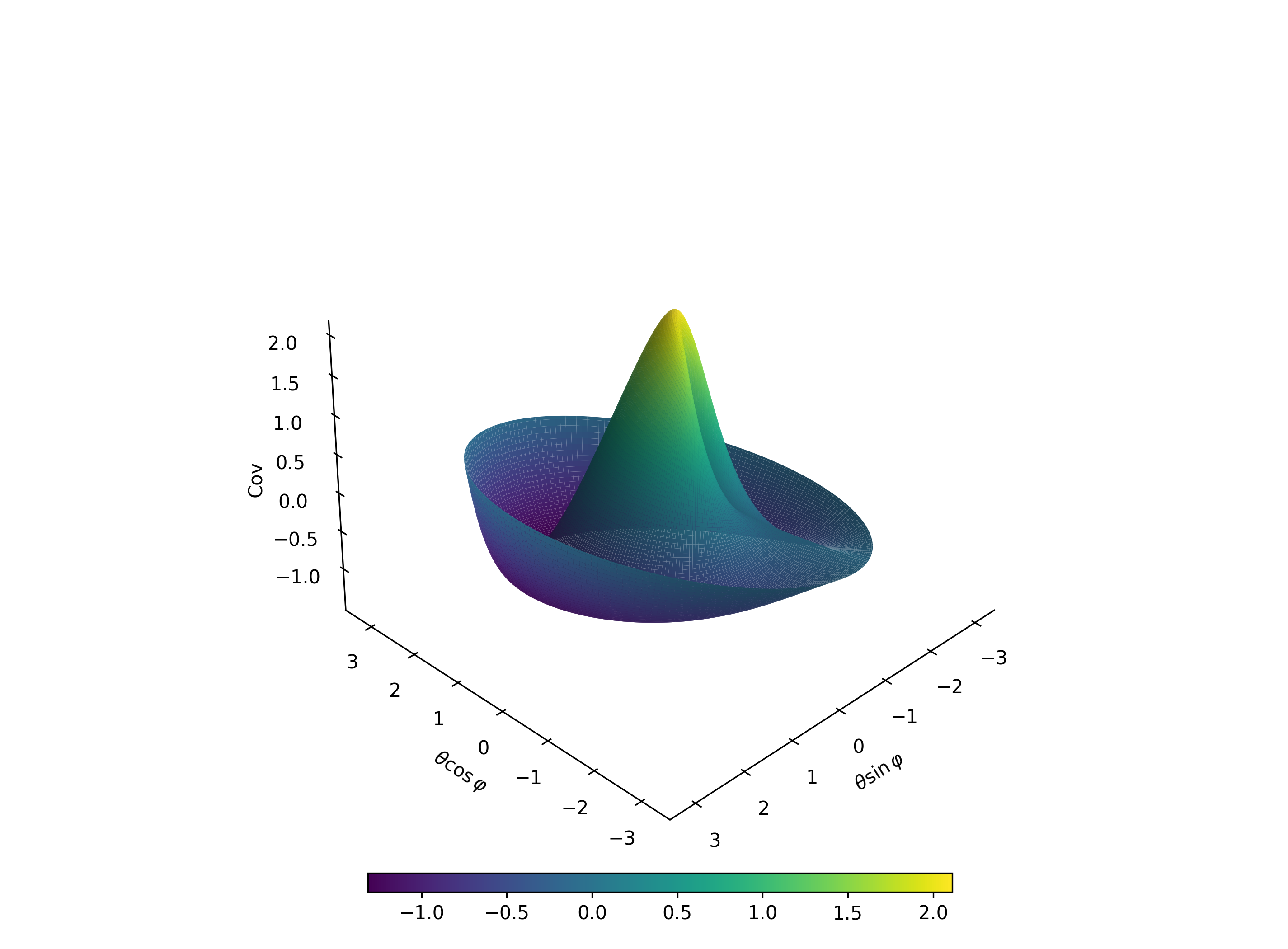}
        \captionsetup{justification=raggedright, singlelinecheck=false, margin=0cm}
    \caption{\it Covariance function  $\text{Cov}(T_V(\mathbf{0}),T_V(\mathbf{x}))$}
    \label{fig3b}
  \end{subfigure}
  \caption{\it Super anisotropic field $T_V(\mathbf{x}),$  $V(\theta,\varphi) = \theta(\pi - \theta)\varphi(2\pi - \varphi) + 1.$}
  \label{fig3}
\end{figure}
\end{example}

\subsection{Convolutional models}\label{sub:conv}
 Convolutional random fields are constructed by convolving an underlying (typically isotropic) field with a deterministic function, referred to as a convolution kernel. This operation is particularly useful in applications that require smoothing or localized modifications of a base field.

Formally, in the general setting, such fields are defined on the rotation group $SO(3),$ as presented in the following definition.

\begin{definition} \label{lmn}
Let $T(\mathbf{x}),$ $\mathbf{x} \in \mathbb{S}^2,$ be a random field, and let $ H(\mathbf{\rho}, \mathbf{x^{\prime}}),$ $\mathbf{\rho} \in SO(3),$ $\mathbf{x^{\prime}} \in \mathbb{S}^2, $ be a deterministic real-valued convolution kernel. Suppose that for all $\mathbf{\rho} \in SO(3)$, the kernel satisfies the integrability condition
\begin{equation*}
\int_{\mathbb{S}^2} \int_{\mathbb{S}^2}
\left| C(\mathbf{x}_1, \mathbf{x}_2) \, H(\mathbf{\rho}, \mathbf{x}_1) H(\mathbf{\rho}, \mathbf{x}_2) \right| \, d\mathbf{s}(\mathbf{x}_1) \, d\mathbf{s}(\mathbf{x}_2) < +\infty,
\end{equation*}
where $C(\mathbf{x}_1, \mathbf{x}_2)$ denotes the covariance function of $T(\cdot).$\\
The corresponding convolutional random field $T_{*H}(\mathbf{\rho})$ is then defined as
\begin{equation}\label{conv}
T_{*H}(\mathbf{\rho}) := (T*H)(\mathbf{\rho}) = \int_{\mathbb{S}^2} T(\mathbf{x^{\prime}}) \, H(\mathbf{\rho}, \mathbf{x^{\prime}}) \, d\mathbf{s}(\mathbf{x^{\prime}}), \quad \mathbf{\rho} \in SO(3).
\end{equation}
    \end{definition}

 The convolution kernel $ H(\mathbf{\rho}, \mathbf{x^{\prime}})$  may vary with the direction $\mathbf{\rho},$ thereby introducing anisotropy. The convolutional framework can be interpreted as a filtering of (isotropic) signals on spherical domains, resulting in a structured anisotropic representation introduced by $ H(\cdot).$  The literature contains various definitions of spherical convolutions. For comprehensive reviews, see \cite{rm21, srk12} and the references therein. Here, we focus on random fields convolved with directional kernels, considering isotropic, commutative anisotropic, and sifting convolutions.

 A spherical isotropic convolution employs an isotropic kernel, invariant under rotations around any axis passing through the center of the sphere. This ensures spherical symmetry, producing outputs that are rotationally invariant.

More specifically, there are two approaches to employ the construction in \eqref{conv} (defined on $SO(3)$) to obtain convolutional random fields on $\mathbb{S}^2.$

In the first approach, for a point $\mathbf{x} \in \mathbb{S}^2$ with spherical coordinates $(\theta, \varphi)$, one can use a convolution kernel $H$ that depends only on the location $\mathbf{x}$ on the sphere and not on the third Euler angle $\psi$ of the rotation $\rho \in SO(3)$:
\begin{equation*}
H(\rho, \mathbf{x^{\prime}}) = H((\varphi, \theta, \psi), \mathbf{x^{\prime}}) = H((\theta, \varphi), \mathbf{x^{\prime}}) = H(\mathbf{x}, \mathbf{x^{\prime}}).
\end{equation*}
The next two classes of fields are examples of this construction.

\begin{definition}\label{secondapp} Let $T(\mathbf{x}),$ $\mathbf{x} \in \mathbb{S}^2,$ be an isotropic random field with covariance function $B(\gamma(\mathbf{x}_1,\mathbf{x}_2))$, and let $ H(\mathbf{x}, \mathbf{x^{\prime}}),$ $\mathbf{x},\mathbf{x^{\prime}} \in \mathbb{S}^2,$ be a deterministic real-valued  convolution kernel. Assume that for all $\mathbf{x} \in \mathbb{S}^2$ the kernel satisfies the integrability condition
\begin{equation*}
\int_{\mathbb{S}^2} \int_{\mathbb{S}^2}
\left| B(\gamma(\mathbf{x}_1, \mathbf{x}_2))  H(\mathbf{x}, \mathbf{x}_1) H(\mathbf{x}, \mathbf{x}_2) \right|  d\mathbf{s}(\mathbf{x}_1)  d\mathbf{s}(\mathbf{x}_2) < +\infty.
\end{equation*}
Then, a convolutional random field associated with $T$ and $H$ is defined by
\begin{equation}\label{conv1}
T_{*H}(\mathbf{x}) := (T*H)(\mathbf{x}) =\int_{\mathbb{S}^2} T(\mathbf{x^{\prime}}) H(\mathbf{x},\mathbf{x^{\prime}}) d\mathbf{s}(\mathbf{x^{\prime}}).
\end{equation}
\end{definition}

The covariance function of the convolutional field $T_{*H}(\mathbf{x}),$ $\mathbf{x} \in \mathbb{S}^2,$ is given by
\begin{equation} \label{covs}
\text{Cov}\left(T_{*H}(\mathbf{x}_1), T_{*H}(\mathbf{x}_2)\right) =
\int_{\mathbb{S}^2} \int_{\mathbb{S}^2}
B\left(\gamma(\mathbf{x}_1^{\prime}, \mathbf{x}_2^{\prime})\right)  H(\mathbf{x}_1, \mathbf{x}_1^{\prime})  H(\mathbf{x}_2, \mathbf{x}_2^{\prime})  d\mathbf{s}(\mathbf{x}_1^{\prime})  d\mathbf{s}(\mathbf{x}_2^{\prime}).
\end{equation}

\begin{remark}
$V$-multiplicative fields can be viewed as a limiting case of convolutional fields, when the integral in {\rm (\ref{conv1})} is interpreted in the sense of Dirac generalized function distribution theory.

Specifically, if
$H(\mathbf{x}, \mathbf{x^{\prime}}) = \delta_{\mathbf{x}}(\mathbf{x^{\prime}})  V(\mathbf{x^{\prime}}),
$
with $\delta_{\mathbf{x}}(\mathbf{x^{\prime}})$ denoting the spherical Dirac delta function mapping $\mathbf{x^{\prime}}$ to $\mathbf{x}$, then the convolution in {\rm (\ref{conv1})} reduces to pointwise multiplication. Thus, the anisotropy of $T_{*H}$ arises entirely from the spatial structure of $V$, making $V$-multiplicative fields a simple yet illustrative example of how the kernel shape controls the anisotropic properties of the resulting field.
\end{remark}

    \begin{definition}
	Let a convolution kernel depend only on the great-circle distance
		between points on the sphere, i.e, $H(\mathbf{x},\mathbf{x^{\prime}})=H(\gamma(\mathbf{x},\mathbf{x^{\prime}})).$ The spherical isotropic convolution is then defined as
        \begin{equation}\label{eq:ic}
        T_{\odot H}(\mathbf{x}) := \left(T \odot H\right)(\mathbf{x}) = \int_{\mathbb{S}^2} T\left(\mathbf{x}^{\prime}\right) H(\gamma(\mathbf{x}, \mathbf{x}^{\prime})) \, d\mathbf{s}(\mathbf{x}^{\prime}).
        \end{equation}
	\end{definition}

By (\ref{covs}) and the isotropy of $T$, for any rotation $\rho \in SO(3)$, it follows that
\begin{align*}
&\text{Cov}\left(T_{\odot H}(\rho \mathbf{x}_1), T_{\odot H}(\rho \mathbf{x}_2)\right)\\
&\quad = \int_{\mathbb{S}^2} \int_{\mathbb{S}^2}
B\left(\gamma(\rho \mathbf{x}_1^{\prime}, \rho \mathbf{x}_2^{\prime})\right)
H(\gamma(\rho \mathbf{x}_1, \mathbf{x}_1^{\prime})) H(\gamma(\rho \mathbf{x}_2, \mathbf{x}_2^{\prime}))\,
d\mathbf{s}(\mathbf{x}_1^{\prime})\, d\mathbf{s}(\mathbf{x}_2^{\prime}) \notag \\
&\quad = \int_{\mathbb{S}^2} \!\! \int_{\mathbb{S}^2}
B\left(\gamma(\rho^{-1} \mathbf{x}_1^{\prime}, \rho^{-1} \mathbf{x}_2^{\prime})\right)
H(\gamma(\mathbf{x}_1, \rho^{-1} \mathbf{x}_1^{\prime})) H(\gamma(\mathbf{x}_2, \rho^{-1} \mathbf{x}_2^{\prime}))\,
d\mathbf{s}(\rho^{-1} \mathbf{x}_1^{\prime})\, d\mathbf{s}(\rho^{-1} \mathbf{x}_2^{\prime}) \notag \\
&\quad = \text{Cov}\left(T_{\odot H}(\mathbf{x}_1), T_{\odot H}(\mathbf{x}_2)\right),
\end{align*}
where the second equality follows from the rotation-invariance of both the isotropic covariance $B(\cdot)$ and the measure $\mathbf{s}(\cdot)$ on $\mathbb{S}^2$. Hence, the spherical isotropic convolution $T_{\odot H}(\mathbf{x})$ defines an isotropic field.

The second approach defines $T_{*H}^{\prime}(\theta, \varphi)$ by selecting a particular section of the field $T_{*H}(\mathbf{\rho})$ along a function $u(\theta,\varphi)$, i.e.,
\[
T_{*H}^{\prime}(\theta,\varphi) := \left. T_{*H}(\mathbf{\rho})\right|_{\psi = u(\theta,\varphi)} = T_{*H}(\theta,\varphi,u(\theta,\varphi)).
\]
This procedure allows defining specific classes of convolutional fields on $\mathbb{S}^2$ as restrictions of the full $SO(3)$ convolution.

	\begin{definition} Let $\mathbf{x} \in \mathbb{S}^2$ have spherical coordinates $(\theta,\varphi)$ and the rotation $\tilde{\rho} \in SO(3)$ be defined by its Euler angles $(\varphi,\theta,u(\theta,\varphi))$, with $u(\theta,\varphi) = \pi - \theta$. Assume that the convolution kernel satisfies $H(\tilde{\rho},\mathbf{x}^{\prime}) = H(\tilde{\rho}^{-1}\mathbf{x}^{\prime})$. Then, the commutative anisotropic convolution is defined by
        \begin{equation}\label{eq:cac}
        T_{\oplus H}(\mathbf{x}) = \left(T \oplus H \right)(\mathbf{x}) = T_{*H}(\theta, \varphi, \pi - \theta) = \int_{\mathbb{S}^2} T(\mathbf{x}^{\prime}) H(\tilde{\rho}^{-1} \mathbf{x}^{\prime}) \, d\mathbf{s}(\mathbf{x}^{\prime}).
        \end{equation}
	\end{definition}
 The choice of the third Euler angle, $\psi \equiv \pi-\theta,$ guarantees the commutativity of the convolution. This specific instance of a directional convolution was introduced in \cite{srk12}. By commutativity, the same transformation can equivalently be expressed as
    \begin{equation*}
 T_{\oplus H}(\mathbf{x}) = \int_{\mathbb{S}^2} T(\tilde{\rho}\mathbf{x}^{\prime}) H(\mathbf{x}^{\prime}) \, d\mathbf{s}(\mathbf{x}^{\prime}).
 \end{equation*}

	\begin{definition} For a real-valued kernel function $H(\mathbf{x}), \mathbf{x} \in \mathbb{S}^2,$ such that $H \in L^2(\mathbb{S}^2),$ the sifting convolution is defined by
		\begin{equation}\label{eq:sc}
			T_{\circledast H} (\mathbf{x})=	\left(T \circledast H\right)(\mathbf{x}) = \int_{\mathbb{S}^2} (\mathcal{L}_{\mathbf{x}} T)(\mathbf{x^\prime}) H(\mathbf{x^\prime}) d \mathbf{s} \, (\mathbf{x^\prime}),
		\end{equation}
		where $\mathcal{L}_{\mathbf{x}}$ is the spherical translation operator acting on a random field expressed via the Laplace series \eqref{spec} as
    \begin{equation}\label{oper}
    (\mathcal{L}_{\mathbf{x}} \, T)(\mathbf{x}^{\prime}) = \sum_{\ell \in \mathbb{N}_0} \sum_{m=-\ell}^{\ell} a_{\ell m} Y_{\ell m}(\mathbf{x}) Y_{\ell m}(\mathbf{x}^{\prime}), \quad \mathbf{x}^{\prime} \in \mathbb{S}^2.
    \end{equation}
\end{definition}
The operator $\mathcal{L}_{\mathbf{x}}$ can equivalently be interpreted as a sifting convolution of a function $f \in L^2\left(\mathbb{S}^2\right)$ with the spherical Dirac delta function, i.e.,
$$\left( \mathcal{L}_{\mathbf{x}} f\right)(\mathbf{x^\prime})=(f\circledast \delta_{\mathbf{x}})(\mathbf{x^\prime}),$$
see \cite{rm21} for details.

\subsection{Deformation models}
Finally, we consider the space deformation method for spherical isotropic random fields. The elliptic anisotropy discussed earlier serves as an example in~$\mathbb{R}^3.$ In Euclidean space, this approach has been studied in several publications and applied to the estimation of non-stationary correlation structures, see, for example, \cite{Perrin, Porcu} and the references therein.  The publication~\cite{Perrin} provided necessary and sufficient conditions under which non-stationary correlation functions can be reduced to stationary ones via the space deformation approach in~$\mathbb{R}^n$. For the case of spherical random fields, such a characterization is a much more difficult problem, but some progress using alternative approaches was obtained in~\cite{Porcu}. We also refer the reader to this work for additional references on available theoretical results and statistical applications.

Consider a continuous bijection (homeomorphism)   $U: \mathbb{S}^2 \to \mathbb{S}^2$ of the sphere onto itself. No smoothness assumptions are imposed unless explicitly stated. The formal definition of $U$-deformed fields is as follows.
    \begin{definition}\label{defor}
        Let $T(\mathbf{x})$, $\mathbf{x} \in \mathbb{S}^2$, be an isotropic second-order random field with zero mean. The space deformed random field $T^U(\cdot)$ is given by
        \[
        T^U(\mathbf{x}) := T(U(\mathbf{x})), \quad \mathbf{x} \in \mathbb{S}^2.
        \]
    \end{definition}
 Unlike multiplicative and convolutional constructions, deformation models can induce anisotropy by modifying the underlying geometry of the sphere rather than the values of the base field.

 \begin{remark}
$U$-deformed fields can be considered as a limiting case of convolutional fields {\rm (\ref{conv1})} when
$H(\mathbf{x},\mathbf{x}')=\delta_{U(\mathbf{x})}(\mathbf{x}')$,
where the integral in {\rm (\ref{conv1})} is interpreted in the sense of Dirac distribution theory.
 \end{remark}

    The following results provide characterizations of isotropy, point isotropy, and some related geometric properties of such space-deformed fields.
    \begin{theorem}\label{thU}
A field $T^U(\cdot)$ is isotropic for every isotropic random field $T(\cdot)$
if and only if $U(\cdot)$ is an isometry of the sphere.
    \end{theorem}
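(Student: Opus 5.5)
The "if" direction is a direct computation and the "only if" direction needs a cleverly chosen test field. Suppose first that $U$ is an isometry of $\mathbb{S}^2$, i.e. $\gamma(U(\mathbf{x}_1),U(\mathbf{x}_2))=\gamma(\mathbf{x}_1,\mathbf{x}_2)$ for all $\mathbf{x}_1,\mathbf{x}_2$. For any isotropic $T$ with covariance $B(\gamma(\cdot,\cdot))$ as in Remark~\ref{rem2}, the deformed field has covariance
\[
\mathbf{E}\,T^U(\mathbf{x}_1)T^U(\mathbf{x}_2)=B\bigl(\gamma(U(\mathbf{x}_1),U(\mathbf{x}_2))\bigr)=B(\gamma(\mathbf{x}_1,\mathbf{x}_2)).
\]
Hence \eqref{eq1} holds and $T^U$ is isotropic. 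The remaining conditions are inherited: the mean is zero, and continuity holds because $U$ is continuous.

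For the converse, I will argue by contradiction using a single well-chosen isotropic field. I want a $B$ on $[0,\pi]$ that is \emph{strictly monotone}. The simplest candidate is the degree-one field $T(\mathbf{x})=\langle \xi,\mathbf{x}\rangle$, where $\xi$ is a standard Gaussian vector in $\mathbb{R}^3$. Its covariance is $\langle\mathbf{x}_1,\mathbf{x}_2\rangle=\cos\gamma(\mathbf{x}_1,\mathbf{x}_2)$, which is strictly decreasing in $\gamma$; equivalently, it has angular power spectrum $C_1>0$ and $C_\ell=0$ otherwise. This field satisfies Conditions~\ref{cond1}--\ref{cond2}. Assume $T^U$ is isotropic for this $T$. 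Then there is a function $\tilde B$ with
\[
\cos\gamma(U(\mathbf{x}_1),U(\mathbf{x}_2))=\tilde B(\gamma(\mathbf{x}_1,\mathbf{x}_2))\quad\text{for all }\mathbf{x}_1,\mathbf{x}_2.
\]
Since $\cos$ is injective on $[0,\pi]$, this says $\gamma(U(\mathbf{x}_1),U(\mathbf{x}_2))=g(\gamma(\mathbf{x}_1,\mathbf{x}_2))$ with $g=\arccos\circ\tilde B$. In other words, $U$ maps pairs at equal distance to pairs at equal distance.

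The remaining step, which I expect to be the main obstacle, is to show $g(t)=t$. The plan is as follows.
\begin{enumerate}
\item Taking $\mathbf{x}_1=\mathbf{x}_2$ gives $g(0)=0$.
\item Since $U$ is a bijection, $g(t)>0$ for $t>0$.
\item $U$ is a continuous bijection of a compact space, hence a homeomorphism. The range of $t\mapsto\gamma(U(\mathbf{x}),U(\mathbf{y}))$ over pairs at distance $t$ is a single value, and $g$ is continuous because $\gamma\circ(U\times U)$ is continuous on the connected set $\{\gamma=t\}$ and the pairs vary continuously.
\item Antipodal points have a unique partner at distance $\pi$. Bijectivity of $U$ together with the fact that the $U$-image of a circle $O(\mathbf{x},t)$ lies in $O(U(\mathbf{x}),g(t))$ should force $g(\pi)=\pi$. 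In particular, $U$ maps circles into circles, and by invariance of domain it maps them onto circles.
\item To conclude $g=\mathrm{id}$, compare areas and use additivity along great circles. For three points on a geodesic with $\gamma(\mathbf{x},\mathbf{z})=\gamma(\mathbf{x},\mathbf{y})+\gamma(\mathbf{y},\mathbf{z})$, the images satisfy the triangle inequality $g(s+t)\le g(s)+g(t)$. Applying the same argument to $U^{-1}$, which has distance function $g^{-1}$, yields the reverse inequality for $g^{-1}$. This gives $g$ additive on $[0,\pi]$ together with $g(\pi)=\pi$, and a continuous additive function with $g(\pi)=\pi$ is $g(t)=t$.
\end{enumerate}

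An alternative for the last step is to invoke the classical theorem that a bijection of $\mathbb{S}^2$ preserving a single distance $t\in(0,\pi)$ with $g(t)=t$ is an isometry (Beckman--Quarles type). I would fall back on this only if the additivity argument proves delicate, for instance in verifying that $U^{-1}$ also maps equal distances to equal distances, which follows from bijectivity of $U$ and surjectivity of $g$. Once $g=\mathrm{id}$, $U$ preserves $\gamma$ and is an isometry, which completes the proof.
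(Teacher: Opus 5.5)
Your argument is correct and follows the paper's skeleton. The ``if'' direction is the observation that an isometry leaves $B(\gamma(\mathbf{x}_1,\mathbf{x}_2))$ unchanged. The ``only if'' direction tests $U$ against one isotropic field with strictly monotone $B$: the paper takes a power exponential covariance, you take the degree-one field with covariance $\cos\gamma$. In both cases isotropy of $T^U$ forces $\gamma(U(\mathbf{x}_1),U(\mathbf{x}_2))=g(\gamma(\mathbf{x}_1,\mathbf{x}_2))$.

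The two routes differ at the next step. The paper simply asserts that a non-isometric bijection sends some two pairs at equal distance to pairs at different distances; it also writes $U^{(-1)}$ where $U$ is meant in its displayed chain. In effect it takes $g=\mathrm{id}$ for granted. That is the genuinely nontrivial point: in $\mathbb{R}^3$, homotheties preserve equality of distances without being isometries. Your steps 1--5 actually prove it, anchored by $g(\pi)=\pi$, which has no Euclidean analogue. So your version is more complete than the paper's.

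A few details need repair, all with tools you already have.
\begin{itemize}
\item Injectivity of $g$ is needed before $U^{-1}$ can have distance function $g^{-1}$. It does not follow from surjectivity of $g$, but it does follow from your step 4. For $0<t<\pi$ one has $0<g(t)<\pi$, since the infinite set $O(\mathbf{x},t)$ cannot be mapped into the single point $-U(\mathbf{x})$. Hence $U$ maps $O(\mathbf{x},t)$ onto the circle $O(U(\mathbf{x}),g(t))$. Disjoint circles $O(\mathbf{x},s)$ and $O(\mathbf{x},t)$ with $s\neq t$ then cannot have the same image, so $g$ is injective.
\item The same fact gives $g(\pi)=\pi$. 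Write $-U(\mathbf{x})=U(\mathbf{z})$; only $\gamma(\mathbf{x},\mathbf{z})=\pi$ can be sent to $\pi$.
\item In step 5 you do not get additivity on all of $[0,\pi]$ directly, because the inequality coming from $g^{-1}$ is only available when $g(s)+g(t)\le\pi$. It is simpler to chain the triangle inequality along a great semicircle. This gives $g(\pi/n)\ge\pi/n$ and $g^{-1}(\pi/n)\ge\pi/n$, so $g(\pi/n)=\pi/n$. Then $g(k\pi/n)\le k\pi/n$ and $g^{-1}(k\pi/n)\le k\pi/n$ give $g(k\pi/n)=k\pi/n$, and continuity finishes.
\item The Beckman--Quarles fallback is not usable as stated. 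It presupposes some $t\in(0,\pi)$ with $g(t)=t$, which is exactly what is unknown at that stage.
\end{itemize}
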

    \begin{remark}\label{isom}
        Note that the group  $O(3)$ of isometries of the sphere is generated by rotations and reflections in great circles, and, thus, every isometry can be written as their composition. This is crucial in the present context, since any deformation not belonging to this group necessarily distorts geodesic distances and thus
breaks isotropy.
    \end{remark}
The following result directly follows from Theorem~\ref{thU} and Remark~\ref{isom}, since it  considers a transformation that is not an isometry of the sphere.
\begin{corollary}\label{cor1}
Let $U(\cdot)$ be the composition of a non-trivial(non-identical) circular scaling transformation of the sphere at the point $\mathbf{x}_0 \in \mathbb{S}^2$, a rotation about the axis through $\mathbf{x}_0$ and its antipodal point, and a reflection of the sphere across the great circle through this axis. Then, the field $T^U(\cdot)$ is isotropic at the point $\mathbf{x}_0$, and there is an isotropic random field $T(\cdot)$ such that $T^U(\cdot)$ is anisotropic.
    \end{corollary}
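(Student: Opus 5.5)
The plan has two parts. First I show that $T^U$ is isotropic at $\mathbf{x}_0$ for every isotropic $T$. Then I use Theorem~\ref{thU} to produce a base field for which $T^U$ is anisotropic.

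\emph{Isotropy at $\mathbf{x}_0$.} Write $U=\sigma\circ\rho_{\mathbf{x}_0,\alpha}\circ S$, where $S$ is the circular scaling at $\mathbf{x}_0$ with scaling function $\lambda$ and $\sigma$ is the reflection in a great circle containing the axis through $\mathbf{x}_0$. The other orderings of the three maps are handled in the same way.

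\begin{enumerate}
\item[(a)] All three maps fix $\mathbf{x}_0$. For $S$ this holds because $\lambda(0)=0$. The rotation and the reflection fix every point of their axis.
\item[(b)] The map $U$ sends each circle $O(\mathbf{x}_0,r)$ onto $O(\mathbf{x}_0,\lambda(r))$. The scaling $S$ maps $O(\mathbf{x}_0,r)$ into $O(\mathbf{x}_0,\lambda(r))$ by definition. The rotation and the reflection are isometries fixing $\mathbf{x}_0$, so they preserve $\gamma(\mathbf{x}_0,\cdot)$.
\item[(c)] Consequently, for every $\mathbf{x}\in\mathbb{S}^2$,
\[
C^U(\mathbf{x}_0,\mathbf{x})=B\bigl(\gamma(U\mathbf{x}_0,U\mathbf{x})\bigr)=B\bigl(\gamma(\mathbf{x}_0,U\mathbf{x})\bigr)=B\bigl(\lambda(\gamma(\mathbf{x}_0,\mathbf{x}))\bigr).
\]
This is a function of $\gamma(\mathbf{x}_0,\mathbf{x})$ only, with $B_{\mathbf{x}_0}=B\circ\lambda$, so $T^U$ is isotropic at $\mathbf{x}_0$ in the sense of \eqref{eq 2}.
\end{enumerate}

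\emph{Existence of an anisotropic $T^U$.} By Theorem~\ref{thU}, it suffices to show that $U$ is not an isometry. Then some isotropic $T$ makes $T^U$ non-isotropic, that is, anisotropic. Since $\rho_{\mathbf{x}_0,\alpha}$ and $\sigma$ are isometries, $U$ is an isometry if and only if $S$ is one. So the whole task reduces to showing that a non-trivial circular scaling is not an isometry.

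\emph{Main obstacle.} The delicate step is the word ``non-trivial'', because an isometry could hide inside the definition of $S$.
\begin{itemize}
\item Suppose $\lambda\neq\mathrm{id}$, so $\lambda(r_0)\neq r_0$ for some $r_0$. Then $\gamma(S\mathbf{x},S\mathbf{x}_0)=\lambda(r_0)\neq r_0=\gamma(\mathbf{x},\mathbf{x}_0)$ for any $\mathbf{x}\in O(\mathbf{x}_0,r_0)$, so $S$ is not an isometry.
\item Suppose $\lambda=\mathrm{id}$. Then $S$ moves each point only along its own meridian semicircle while keeping its distance to $\mathbf{x}_0$, so $S$ is the identity. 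This contradicts non-triviality.
\end{itemize}
If one interprets ``non-identical'' as referring only to $\lambda$, the first case applies directly. If needed, I would also exhibit a concrete witness: take $T$ with $B$ strictly decreasing, for example the power exponential model. Choose points $\mathbf{x}_1,\mathbf{x}_2$ on one meridian through $\mathbf{x}_0$ and points $\mathbf{y}_1,\mathbf{y}_2$ at the same mutual distance but with $\mathbf{y}_1=\mathbf{x}_0$, arranged so that the images under $U$ have different distances. Then $C^U(\mathbf{x}_1,\mathbf{x}_2)\neq C^U(\mathbf{y}_1,\mathbf{y}_2)$ although $\gamma(\mathbf{x}_1,\mathbf{x}_2)=\gamma(\mathbf{y}_1,\mathbf{y}_2)$, which violates \eqref{eq1}.
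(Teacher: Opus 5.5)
Your proposal is correct and follows the same route as the paper. Isotropy at $\mathbf{x}_0$ holds because $U$ fixes $\mathbf{x}_0$ and maps each circle $O(\mathbf{x}_0,r)$ onto a circle centered at $\mathbf{x}_0$, giving $B_{\mathbf{x}_0}=B\circ\lambda$; anisotropy for some base field follows from Theorem~\ref{thU} because $U$ is not an isometry, and your check that $\lambda(r_0)\neq r_0$ already changes $\gamma(\mathbf{x}_0,\cdot)$ fills in the non-isometry step the paper only asserts.
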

The isotropy at the point $\mathbf{x}_0$ follows as the circular
scaling preserves all circles centered at $\mathbf{x}_0.$
The anisotropy statement follows because the transformation does not preserve at least one circle, and hence
breaks global isotropy for suitable isotropic covariance functions.

Theorem~{\rm\ref{thU}} is valid for any isotropic field $T(\cdot)$, but the last statement of Corollary~\ref{cor1} only shows the existence of a field $T(\cdot)$ with the given property.
However, under the non-degeneracy assumption in Condition~\ref{cond2}, the considered isotropic covariance functions are non-constant. Since any non-trivial circular scaling necessarily modifies circles away from the point $\mathbf{x}_0$, the following result generalizes Corollary~\ref{cor1} and establishes anisotropy for every isotropic random field.

\begin{corollary}\label{cor2}
Let $U(\cdot)$ be the composition of a non-trivial circular scaling transformation of the sphere at the point $\mathbf{x}_0 \in \mathbb{S}^2$, a rotation about the axis through $\mathbf{x}_0$ and its antipodal point, and a reflection of the sphere across the great circle through this axis. Then, for any choice of an isotropic random field $T(\cdot)$, the field $T^U(\cdot)$ is anisotropic, but remains isotropic at the point $\mathbf{x}_0$.
    \end{corollary}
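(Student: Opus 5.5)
Two claims must be established for an arbitrary isotropic $T$ with covariance $B(\gamma(\cdot,\cdot))$ satisfying Condition~\ref{cond2}: isotropy of $T^U$ at $\mathbf{x}_0$, and anisotropy of $T^U$. Throughout we write $U=\sigma\circ\rho_{\mathbf{x}_0,\alpha}\circ S_\lambda$, where $S_\lambda$ is the circular scaling at $\mathbf{x}_0$ with a non-identical scaling function $\lambda$, and $\sigma$ is the reflection. The covariance of $T^U$ is $C^U(\mathbf{x}_1,\mathbf{x}_2)=B(\gamma(U\mathbf{x}_1,U\mathbf{x}_2))$.

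\emph{Isotropy at $\mathbf{x}_0$.} Each of the three maps fixes $\mathbf{x}_0$, so $U\mathbf{x}_0=\mathbf{x}_0$. The rotation and the reflection are isometries, and $S_\lambda$ moves points along meridians through $\mathbf{x}_0$. Hence
\[
\gamma(U\mathbf{x}_0,U\mathbf{x})=\gamma(\mathbf{x}_0,S_\lambda\mathbf{x})=\lambda(\gamma(\mathbf{x}_0,\mathbf{x})).
\]
It follows that $C^U(\mathbf{x}_0,\mathbf{x})=B(\lambda(\gamma(\mathbf{x}_0,\mathbf{x})))$, which is a function of $\gamma(\mathbf{x}_0,\mathbf{x})$ alone. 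This is exactly \eqref{eq 2} with $B_{\mathbf{x}_0}=B\circ\lambda$.

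\emph{Anisotropy.} We argue by contradiction and assume $C^U(\mathbf{x}_1,\mathbf{x}_2)=\tilde B(\gamma(\mathbf{x}_1,\mathbf{x}_2))$ for some $\tilde B$. Taking $\mathbf{x}_1=\mathbf{x}_0$ gives $\tilde B=B\circ\lambda$. The plan is to choose point pairs at a common distance whose images lie at different distances, and to use non-constancy of $B$ to force a contradiction.

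Since $\lambda$ is a continuous bijection of $[0,\pi]$ fixing the endpoints, it is strictly increasing. Non-triviality gives some $t_0$ with $\lambda(t_0)\ne t_0$, and by continuity $\lambda(t)\neq t$ on an open interval $I$. Now compare two pairs at the same distance $s$. The first is $(\mathbf{x}_0,\mathbf{x})$ with $\gamma(\mathbf{x}_0,\mathbf{x})=s$, whose image distance is $\lambda(s)$. The second is a pair $(\mathbf{y}_1,\mathbf{y}_2)$ lying on one meridian from $\mathbf{x}_0$ at colatitudes $a$ and $a+s$, whose image distance is $\lambda(a+s)-\lambda(a)$. Isotropy would then force
\[
B(\lambda(s))=B(\lambda(a+s)-\lambda(a))\quad\text{for all admissible } a,s.
\]

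The main obstacle is turning this identity into the statement that $B$ is constant. The difference $\lambda(a+s)-\lambda(a)$ depends on $a$, and is not identically $\lambda(s)$, precisely because $\lambda$ is non-identical; an increasing continuous $\lambda$ with $\lambda(a+s)=\lambda(a)+\lambda(s)$ on the whole range would be linear, hence the identity. Consequently, for fixed $s$, the value $a\mapsto\lambda(a+s)-\lambda(a)$ sweeps a nondegenerate interval as $a$ ranges over $[0,\pi-s]$. At $a=0$ it equals $\lambda(s)$, and at $a=\pi-s$ it equals $\pi-\lambda(\pi-s)$. So $B$ is constant on the interval between these two values.

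The remaining step is to propagate this local constancy. Varying $s$, and also using non-collinear pairs, for which the image distances range over further intervals by continuity and connectedness, should chain these intervals into all of $[0,\pi]$. If that covering is hard to control, the fallback is to prove only $B(u)=B(v)$ for all $u,v$ in a connected set reaching $0$ from $\pi$. Either way $B$ is constant, so $C$ is constant, contradicting Condition~\ref{cond2}.
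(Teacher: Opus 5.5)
Your isotropy-at-$\mathbf{x}_0$ argument is correct and more explicit than the paper's, since $C^U(\mathbf{x}_0,\mathbf{x})=B(\lambda(\gamma(\mathbf{x}_0,\mathbf{x})))$. The anisotropy part, however, has a genuine gap exactly at the step you call the main obstacle.

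Your Cauchy-equation remark only shows that $a\mapsto\lambda(a+s)-\lambda(a)$ is non-constant for \emph{some} $s$, not for every fixed $s$. For example, if $\lambda(a+\pi/2)=\lambda(a)+\pi/2$ on $[0,\pi/2]$ with $\lambda$ non-identical on $[0,\pi/2]$, the map is constant at $s=\pi/2$. The two endpoint values you quote can also coincide, so you should work with the whole range $G(s)$ of this map.

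What you have actually established is therefore only that $B$ is constant on \emph{one} nondegenerate interval. That is not a contradiction: non-constant isotropic covariances can be constant on intervals bounded away from $0$. The Askey covariances in the proof of Proposition~\ref{prop3} vanish on $[c,\pi]$. All the weight therefore rests on the propagation step, which you leave as ``should chain''. Chaining over all of $[0,\pi]$ is also the wrong target. The chain breaks wherever $G(s)$ is a single point, and continuity alone does not bridge a closed nowhere-dense set of breaks (think of Cantor-type functions).

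What is missing is to reach a neighbourhood of $0$ and then use positive-definiteness. Three ingredients are needed.

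\emph{Nondegeneracy for small $s$.} Let $Z$ be the set of $s$ for which $G(s)$ is a single point. $Z$ is closed and closed under addition inside $[0,\pi]$. If it had positive elements accumulating at $0$, their multiples would make it dense, hence equal to $[0,\pi]$. That would make $\lambda$ additive, hence the identity. So $G(s)$ is nondegenerate for all $s\in(0,\varepsilon_0)$.

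\emph{Constancy near $0$.} The endpoints of $G(s)$ depend continuously on $s$, so $G(s)\cap G(s')\neq\emptyset$ for $s'$ near $s$. Hence $s\mapsto B(\lambda(s))$ is locally constant, and therefore constant, on $(0,\varepsilon_0)$. This means $B$ is constant on $[0,\lambda(\varepsilon_0)]$.

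\emph{Contradiction.} For nearby points, $\mathbf{E}(T(\mathbf{x})-T(\mathbf{y}))^2=2\bigl(B(0)-B(\gamma(\mathbf{x},\mathbf{y}))\bigr)=0$. Chaining along geodesics then shows that $C$ is constant, contradicting Condition~\ref{cond2}.

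This last fact is exactly where the paper's proof starts: $B$ cannot be constant on any $[0,\varepsilon]$. The paper then exploits the distortion of small circles at an extremum of $\lambda-\mathrm{id}$. Your meridian-pair functional equation is a different and arguably cleaner route, but it only becomes a proof once these three ingredients are supplied.
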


\begin{example} \label{e444}
The deformed random field $T^U(\cdot)$ in Figure~\ref{fig4a} is obtained using a circular scaling transformation of the sphere at the pole $\mathbf{0} \in \mathbb{S}^2$ with the circular scaling function $\lambda(\theta) = \pi (\theta/\pi)^p$, $\theta \in [0,\pi]$. A value of $p=0.5$ was used. The realization in Figure~\ref{fig4a} shows that the field values are the same as in Figure~\ref{fig1a}, but they are shifted from the pole along the meridians. Therefore, the covariance function of $T^U(\cdot)$ in Figure~\ref{fig4b} takes higher values than the one shown in Figure~\ref{fig1b}.
\begin{figure}[!htb]
  \centering
  \begin{subfigure}{0.53\textwidth}
    \centering
    \includegraphics[width=\textwidth, trim={1 0 0 0}, clip]{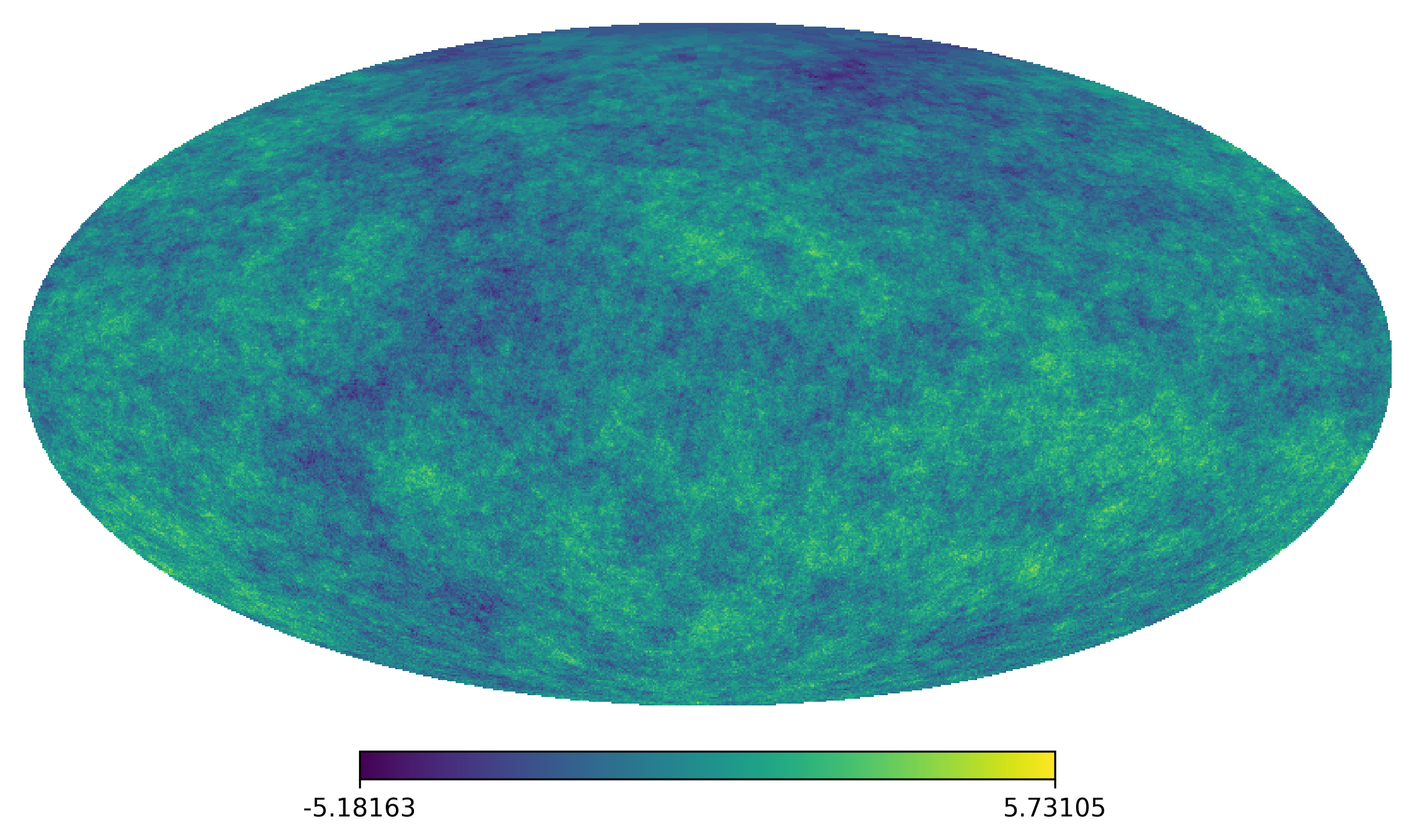}\\[0.2cm]
    \caption{\it Realization of the field}\label{fig4a}
  \end{subfigure}%
  \begin{subfigure}{0.61\textwidth}
    \centering
    \includegraphics[width=\textwidth, trim={4cm 0 0 5.5cm}, clip]{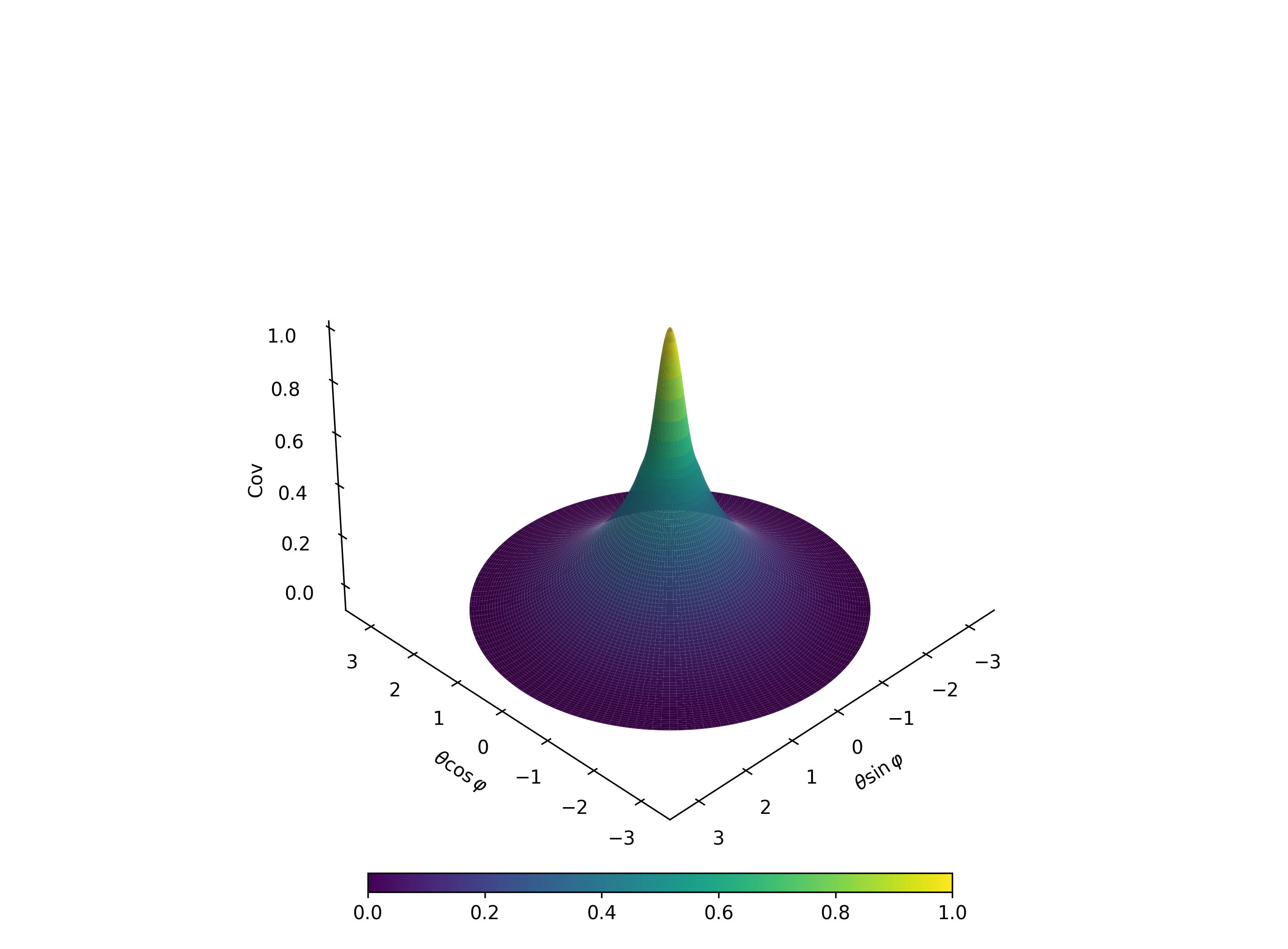}
        \captionsetup{justification=raggedright, singlelinecheck=false, margin=0cm}
    \caption{\it Covariance function  $\text{Cov}(T^U(\mathbf{0}),T^U(\mathbf{x}))$}
    \label{fig4b}
  \end{subfigure}
  \caption{\it Deformed field $T^U(\cdot)$
for $\lambda(\theta)=\pi\,(\theta/\pi)^{0.5}.$}
  \label{fig4}
\end{figure}

\end{example}

\section{Connections with the spectral theory}\label{sec_spec}
Spectral representations of random fields on the sphere provide a comprehensive analytical framework for analyzing their structure and properties. This section makes explicit the connection between the spectral properties of a spherical random field and its geometric isotropic or anisotropic behavior and the composite transformations introduced in Section~\ref{sec:composite}.

The spectral covariance matrix admits a block decomposition corresponding to irreducible representations of $SO(3)$, and anisotropy corresponds precisely to departures from irreducible block-diagonality.

For a general real-valued random field on $\mathbb{S}^2$ with spherical harmonic coefficients $\{a_{\ell m}\}$, the (possibly non-diagonal) angular power spectrum is defined as
\begin{equation*} \label{Cllmm} C_{\ell \ell^{\prime} m m^{\prime}}:=\mathbf{E}\left( a_{\ell m} a_{\ell^{\prime} m^{\prime}}^{\ast} \right),\quad \ell, \ell^{\prime} \in \mathbb{N}_0, \ m \in \{-\ell, \ldots, \ell\},\ m^{\prime} \in \{-\ell^{\prime}, \ldots, \ell^{\prime}\}.
\end{equation*}
The associated covariance function admits the expansion
\begin{equation}\label{cov}
\text{Cov}\left( T(\mathbf{x}_1), T (\mathbf{x}_2)\right)= \sum_{\ell \in \mathbb{N}_0} \sum_{ \ell^\prime \in \mathbb{N}_0} \sum_{m=-\ell}^{\ell}\sum_{m^\prime =-\ell^\prime}^{\ell^\prime} C_{\ell \ell^\prime m m^\prime} Y_{\ell m}(\mathbf{x}_1)Y^\ast_{\ell^\prime m^\prime}(\mathbf{x}_2).
\end{equation}
It follows from the spectral representation \eqref{spec} that a spherical random field is isotropic if and only if condition~\eqref{iso} holds, see \cite{Baldi}. Equivalently, a field is anisotropic if and only if its spectrum violates this condition. Thus, anisotropy occurs if and only if there exist indices $\ell,\ell^{\prime},m,m^{\prime}$ such that either
\begin{equation} \label{Ee}
C_{\ell \ell^{\prime} m m^{\prime}}  \neq 0, \quad \mbox{when} \quad |\ell - \ell^{\prime}| + |m - m^{\prime}| \neq 0,
\end{equation}
or
\begin{equation*}
C_{\ell \ell m m} \neq C_{\ell \ell m^{\prime} m^{\prime}},
\quad \text{for some } m\neq m^{\prime}.
\end{equation*}
The first condition reflects the presence of cross-correlations across different harmonic modes, while the second captures a non-uniform distribution of power across azimuthal indices at fixed angular frequency.

Finally, the spectral covariance matrix satisfies the Hermitian symmetry
\[
C_{\ell \ell^{\prime} m m^{\prime}} = C_{\ell^{\prime} \ell m^{\prime} m}^{\ast},
\quad \ell, \ell^{\prime} \in \mathbb{N}_0, \quad
m \in \{-\ell, \ldots, \ell\}, \quad m^{\prime} \in \{-\ell^{\prime}, \ldots, \ell^{\prime}\},
\]
which follows from the definition of covariance for complex-valued harmonic coefficients, and, since the field is real-valued,
\[
C_{\ell \ell^{\prime} m m^{\prime}} = (-1)^{m+m^{\prime}} C^*_{\ell\ell^{\prime} (-m) (-m^{\prime})}.
\]

We now classify different forms of spectral anisotropy according to the specific structure of the covariance matrix
$\{C_{\ell \ell^{\prime} m m^{\prime}}\}$, distinguishing whether anisotropy manifests across harmonic degrees, harmonic orders, or solely along the diagonal.

\begin{definition} \label{def20}
A random field is called harmonic degree-dependent anisotropic if condition~(\ref{Ee}) reduces to
\[
C_{\ell \ell^{\prime} m m^{\prime}}= \delta_{m}^{m^\prime} C_{\ell \ell^{\prime} m},
\]
with
\begin{equation*} \label{Cl}
C_{\ell \ell^{\prime} m}\neq 0 \quad \mbox{for some} \quad \ell \neq \ell^{\prime},\ \ell,\ell^{\prime}  \in \mathbb{N}_0, \  m\in\{-\min(\ell,\ell^\prime),\ldots, \min(\ell,\ell^\prime)\}.
\end{equation*}
\end{definition}
\begin{definition}\label{def21}
A random field is called harmonic order-dependent anisotropic if condition~(\ref{Ee}) takes the form
\[C_{\ell \ell^\prime m m^\prime} = \delta_{\ell}^{\ell^\prime} \, \tilde{C}_{\ell m m^\prime}
\]
with the coefficients satisfying
\begin{equation*} \label{clmm}
\tilde{C}_{\ell m m^\prime} \neq 0 \quad \text{for some } \ell \in \mathbb{N}_0,\ m \neq m^{\prime},\ m, m^\prime \in \{-\ell, \ldots, \ell\}.
\end{equation*}
\end{definition}

\begin{definition} \label{def22}
A random field is diagonal anisotropic if
\[
C_{\ell \ell^\prime m m^\prime} = \delta_{\ell}^{\ell^\prime} \delta_{m}^{m^\prime}\, {C}_{\ell m}
\]
with
\begin{equation*} \label{dan}
{C}_{\ell m} \neq {C}_{\ell m^{\prime}} \quad \text{for some } \ell \in \mathbb{N}_0, \ m \neq m^{\prime}, \ m, m^\prime \in \{-\ell, \ldots, \ell\}.
\end{equation*}
\end{definition}

The introduced conditions on $C_{\ell \ell^\prime m m^\prime}$ define different variations of the angular power spectrum and coupling modes. A second-order spherical random field belongs to one of the following disjoint spectral classes in (\ref{spect}), where the fully anisotropic class of fields has an angular power spectrum $C_{\ell \ell^\prime m m^\prime}$ that depends non-degenerately on all four indices $\ell, \ell^\prime, m,$ and $m^\prime.$
\begin{align}
\textsc{Second Order}&
\,= \,\textsc{Isotropic}
\,\cup\, \textsc{Degree-Dependent Anis.} \nonumber \\
&\hspace{-1cm}
\cup \,\textsc{Order-Dependent Anis.}
\, \cup\, \textsc{Diagonal Anis.}
\,\cup \,\textsc{Fully Anis.}\label{spect}
\end{align}
The subsequent results establish the relationships between these spectral classes, the anisotropic classes in~(\ref{isotr}), and the classes of composite random fields from Section~\ref{sec:composite}.

The early work~\cite{Jones}, and subsequently \cite{Alegría, Hitczenko} and~\cite{Buhmann}, investigated spectral representations of random fields that are axially symmetric with respect to a fixed axis passing through the pole $\mathbf{0}$. In this framework, it was shown that the condition
$C_{\ell \ell^{\prime} m m^{\prime}} = \delta_{m}^{m^{\prime}} \, C_{\ell \ell^{\prime} m}$
is necessary and sufficient for axial symmetry. Consequently, the following result holds.

\begin{proposition} \label{prop5}
The class of random fields that are axially symmetric with respect to the axis passing through the pole $\mathbf{0}$ is the union of the following mutually disjoint subclasses: isotropic fields, diagonal anisotropic fields, and harmonic degree-dependent anisotropic fields.
For all fields in this class, the poles corresponding to $\theta=0$ and $\theta=\pi$ are points of isotropy.

A random field is axially symmetric of this type if and only if its spectral coefficients satisfy the condition
\[C_{\ell \ell^{\prime} m m^{\prime}}= \delta^{m^{\prime}}_{m} C_{\ell \ell^{\prime} m}.
\]
Equivalently, its covariance function $C(\mathbf{x}_1, \mathbf{x}_2)$ admits the spectral representation
\begin{align}
C((\theta_1,\varphi_1),(\theta_2,\varphi_2))& = \sum_{\ell \in \mathbb{N}_0} \sum_{\ell^\prime \in \mathbb{N}_0} \sum_{m=-\min(\ell,\ell^{\prime})}^{\min(\ell,\ell^{\prime})}
    \left(\frac{(2\ell+1)(\ell-m)!(2\ell^{\prime}+1)(\ell^{\prime}-m)!}{16\pi^2(\ell+m)!(\ell^{\prime}+m)!}\right)^{1/2}\nonumber\\
 &\quad \times C_{\ell\ell^\prime m}\, e^{im(\varphi_1-\varphi_2)}  P_{\ell}^{m}(\cos(\theta_1))P_{\ell^{\prime}}^{m}(\cos(\theta_2)),\label{cov_ax}
\end{align}
where $\mathbf{x}_1=(\theta_1,\varphi_1)$ and $\mathbf{x}_2=(\theta_2,\varphi_2).$

Moreover, if the covariance function is differentiable, then it belongs to this axially symmetric class if and only if, for all $\theta_i,\varphi_i$, $i=1,2$, it satisfies
\begin{equation}\label{dif1}
\frac{\partial}{\partial \varphi_1} C((\theta_1,\varphi_1),(\theta_2,\varphi_2))=-\frac{\partial}{\partial \varphi_2} C((\theta_1,\varphi_1),(\theta_2,\varphi_2)).\end{equation}
\end{proposition}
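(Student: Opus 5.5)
The plan is to derive all parts of Proposition~\ref{prop5} from the spectral expansion \eqref{cov}, the explicit form \eqref{ylm} of $Y_{\ell m}$, and the characterization of isotropy \eqref{iso}. Rotation by an angle $\alpha$ about the polar axis acts on spherical harmonics diagonally: $Y_{\ell m}(\theta,\varphi+\alpha)=e^{im\alpha}Y_{\ell m}(\theta,\varphi)$. Substituting this into \eqref{cov} gives
\[
C((\theta_1,\varphi_1+\alpha),(\theta_2,\varphi_2+\alpha))=\sum_{\ell,\ell',m,m'} C_{\ell\ell'mm'}e^{i(m-m')\alpha}Y_{\ell m}(\mathbf{x}_1)Y^*_{\ell'm'}(\mathbf{x}_2).
\]

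For the equivalence, the direction ``spectrum implies symmetry'' is immediate: if $C_{\ell\ell'mm'}=\delta_m^{m'}C_{\ell\ell'm}$, the phase factor is $1$, so the covariance is invariant. For the converse, I would project the invariance identity onto $Y^*_{\ell m}(\mathbf{x}_1)Y_{\ell'm'}(\mathbf{x}_2)$ over $\mathbb{S}^2\times\mathbb{S}^2$. By orthonormality \eqref{Ylms}, together with continuity of $C$ (Condition~\ref{cond1}), which justifies the $L_2(\mathbb{S}^2\times\mathbb{S}^2)$ expansion, this yields $C_{\ell\ell'mm'}=e^{i(m-m')\alpha}C_{\ell\ell'mm'}$ for all $\alpha$. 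Hence $C_{\ell\ell'mm'}=0$ whenever $m\ne m'$.

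Formula \eqref{cov_ax} then follows by inserting $m=m'$ and \eqref{ylm} into \eqref{cov}. The summation range $|m|\le\min(\ell,\ell')$ comes from both harmonics being defined, and the phase $e^{im\varphi_1}e^{-im\varphi_2}$ appears explicitly. One point of care is the normalizing constant: the product of the two normalizations in \eqref{ylm} gives $(4\pi)^{-2}$ inside the square root, matching the stated $16\pi^2$. For negative $m$ I would either use the convention for $P_\ell^{-m}$ or note that \eqref{ylm} extends formally.

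For the differential criterion, the condition \eqref{dif1} says that $(\partial_{\varphi_1}+\partial_{\varphi_2})C=0$. This is exactly $\frac{d}{d\alpha}C((\theta_1,\varphi_1+\alpha),(\theta_2,\varphi_2+\alpha))=0$. Since $\alpha\mapsto C(\cdot+\alpha,\cdot+\alpha)$ is differentiable with this derivative, \eqref{dif1} holds iff the function is constant in $\alpha$, i.e. iff the field is axially symmetric. Periodicity in $\varphi$ causes no trouble, because constancy on $\mathbb{R}$ follows from a vanishing derivative.

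For the decomposition into disjoint subclasses, I would split the fields satisfying $C_{\ell\ell'mm'}=\delta_m^{m'}C_{\ell\ell'm}$ into three cases:
\begin{itemize}
\item if some $C_{\ell\ell'm}\ne0$ with $\ell\ne\ell'$, the field is degree-dependent anisotropic (Definition~\ref{def20});
\item otherwise the spectrum is $\delta_\ell^{\ell'}\delta_m^{m'}C_{\ell m}$, and the field is isotropic by \eqref{iso} if $C_{\ell m}$ does not depend on $m$;
\item in the remaining case it is diagonal anisotropic (Definition~\ref{def22}).
\end{itemize}
Disjointness follows because the defining conditions are mutually exclusive. Conversely, each of the three classes has spectra of the form $\delta_m^{m'}(\cdot)$, so each lies in the axially symmetric class. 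Finally, the isotropy at the poles was already observed before Proposition~\ref{pp}: axial symmetry gives $C(\mathbf{0},\rho_{\mathbf{0},\alpha}\mathbf{x})=C(\mathbf{0},\mathbf{x})$, so $C(\mathbf{0},\cdot)$ depends only on $\theta=\gamma(\mathbf{0},\mathbf{x})$. The same argument applies at the antipode.

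The main obstacle is the rigorous coefficient extraction in the converse direction, namely the interchange of the double series with integration. I would handle it using $L_2$ convergence of \eqref{cov}, which holds because $C$ is continuous and hence square-integrable, with $C_{\ell\ell'mm'}$ equal to its Fourier coefficients with respect to the orthonormal basis $Y_{\ell m}\otimes Y^*_{\ell'm'}$.
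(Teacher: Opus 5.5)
Your argument is correct, and its skeleton matches the paper's: the three disjoint classes are read off from \eqref{iso} and the definitions, the poles are isotropy points because $\rho_{\mathbf{0},\alpha}$ fixes them, and \eqref{cov_ax} comes from inserting \eqref{ylm} into \eqref{cov}. The routes differ in two places.

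First, the paper does not prove that axial symmetry about the polar axis is equivalent to $C_{\ell\ell'mm'}=\delta_m^{m'}C_{\ell\ell'm}$. It imports this from the literature cited just before the proposition. You derive it from $Y_{\ell m}(\theta,\varphi+\alpha)=e^{im\alpha}Y_{\ell m}(\theta,\varphi)$, which makes the result self-contained. Moreover, the obstacle you flag does not arise. By Fubini (the variance is bounded), $C_{\ell\ell'mm'}=\int\int C(\mathbf{x}_1,\mathbf{x}_2)Y^*_{\ell m}(\mathbf{x}_1)Y_{\ell'm'}(\mathbf{x}_2)\,d\mathbf{s}(\mathbf{x}_1)\,d\mathbf{s}(\mathbf{x}_2)$. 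The measure-preserving substitution $\mathbf{y}_i=\rho_{\mathbf{0},\alpha}\mathbf{x}_i$ then turns the same integral of $C(\rho_{\mathbf{0},\alpha}\mathbf{x}_1,\rho_{\mathbf{0},\alpha}\mathbf{x}_2)$ into $e^{i(m-m')\alpha}C_{\ell\ell'mm'}$ directly, with no interchange of series and integrals.

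Second, for \eqref{dif1} the paper differentiates \eqref{cov} term by term and uses linear independence of the $Y_{\ell m}$ to get $(m-m')C_{\ell\ell'mm'}=0$. This exposes the mechanism ($\partial_\varphi$ acts on $Y_{\ell m}$ as multiplication by $im$) and parallels the Laplace--Beltrami argument of Proposition~\ref{prop5_1}. However, it tacitly assumes the expansion can be differentiated term by term, which differentiability of $C$ alone does not provide. Your identity $\frac{d}{d\alpha}C((\theta_1,\varphi_1+\alpha),(\theta_2,\varphi_2+\alpha))=(\partial_{\varphi_1}+\partial_{\varphi_2})C$ avoids the expansion altogether and links \eqref{dif1} directly to invariance under $\{\rho_{\mathbf{0},\alpha}\}$. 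It requires only that $C$ be differentiable in $(\varphi_1,\varphi_2)$ in the total sense, so that the chain rule applies.
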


Thus, the class of harmonic degree-dependent anisotropic fields consists of all axially symmetric fields around the axis through the pole $\mathbf{0}$ that are neither isotropic nor diagonal anisotropic. In particular, their covariance functions necessarily admit the representation~(\ref{axiall}). Additionally, by~\cite[Lemma 5]{Buhmann}, the condition $
C_{\ell \ell^{\prime} (- m)}= C_{\ell \ell^{\prime} m}$ is necessary and sufficient for longitudinal reversibility. Consequently, under this symmetry condition, the covariance function reduces to the form~(\ref{reversib}).

\begin{proposition} \label{prop5_1}
A spherical random field belongs to a disjoint union of the classes of isotropic, diagonal anisotropic, and harmonic order-dependent anisotropic fields if and only if its spectrum satisfies
\[
C_{\ell \ell^{\prime} m m^{\prime}}= \delta^{\ell^{\prime}}_{\ell} C_{\ell m m^{\prime}}.
\]
In this case, the corresponding covariance function $C(\mathbf{x}_1, \mathbf{x}_2)$ admits the spectral representation
\begin{align} C((\theta_1,\varphi_1),(\theta_2,\varphi_2))& = \sum_{\ell \in \mathbb{N}_0}  \sum_{m=-\ell}^{\ell} \sum_{m^{\prime}=-\ell}^{\ell}
   \frac{2\ell+1}{4\pi} \left(\frac{(\ell-m)!(\ell-m^{\prime})!}{(\ell+m)!(\ell+m^{\prime})!}\right)^{1/2}\nonumber\\
 &\quad \times \tilde{C}_{\ell mm^\prime}\, e^{i(m\varphi_1-m^{\prime}\varphi_2)}  P_{\ell}^{m}(\cos\theta_1)P_{\ell}^{m^{\prime}}(\cos\theta_2),\nonumber
\end{align}
where $\mathbf{x}_1=(\theta_1,\varphi_1)$ and $\mathbf{x}_2=(\theta_2,\varphi_2).$
If the covariance function is twice differentiable, then this class can be equivalently characterized by the condition
\begin{equation}\label{dif2}
\Delta
_{\mathbf{x}_1} C(\mathbf{x}_1, \mathbf{x}_2)=\Delta
_{\mathbf{x}_2}  C(\mathbf{x}_1, \mathbf{x}_2),
\end{equation}
holding for all $\mathbf{x}_1$ and $\mathbf{x}_2$, where $\Delta_{\mathbf{x}}$ is the Laplace–Beltrami operator on the sphere, given in spherical coordinates by
\[
\Delta _{(\theta ,\varphi )}=\frac{1}{\sin \theta } \; \frac{\partial }{
	\partial \theta }\left( \sin {\theta }\;\frac{\partial }{\partial \theta }
\right) +\frac{1}{\sin ^{2}{\theta }}\;\frac{\partial ^{2}}{\partial
	\varphi^{2}}.
\]
\end{proposition}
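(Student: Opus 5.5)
The first claim is essentially definitional bookkeeping. If $C_{\ell\ell' mm'}=\delta_\ell^{\ell'}C_{\ell mm'}$, then condition \eqref{Ee} can only be violated through entries with $\ell=\ell'$ and $m\neq m'$. Three cases arise.
\begin{itemize}
\item If all such off-diagonal entries vanish and $C_{\ell\ell mm}$ does not depend on $m$, then \eqref{iso} holds and the field is isotropic by \cite{Baldi}.
\item If the off-diagonal entries vanish but $C_{\ell\ell mm}$ depends on $m$ for some $\ell$, the field is diagonal anisotropic (Definition~\ref{def22}).
\item If some $\tilde C_{\ell mm'}\neq0$ with $m\neq m'$, the field is harmonic order-dependent anisotropic (Definition~\ref{def21}).
\end{itemize}
These three cases are mutually exclusive, so the union is disjoint. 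Conversely, each of the three classes is defined by a spectrum of the form $\delta_\ell^{\ell'}(\cdot)$, which gives the equivalence.

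For the spectral representation, substitute $C_{\ell\ell'mm'}=\delta_\ell^{\ell'}\tilde C_{\ell mm'}$ into \eqref{cov}. This collapses the double sum over degrees to a single sum over $\ell$. Then insert \eqref{ylm}, together with the conjugate $Y^*_{\ell m'}(\theta_2,\varphi_2)=e^{-im'\varphi_2}(\cdots)P_\ell^{m'}(\cos\theta_2)$; the Legendre functions are real. The normalising constants multiply to $\frac{2\ell+1}{4\pi}\bigl(\frac{(\ell-m)!(\ell-m')!}{(\ell+m)!(\ell+m')!}\bigr)^{1/2}$. For negative orders we use the stated convention for $P_\ell^{-m}$, which is consistent with \eqref{ylm}. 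Convergence of the series is inherited from that of \eqref{cov}, i.e.\ the $L_2$ convergence of \eqref{spec}.

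For the differential characterization, we use that $Y_{\ell m}$ are eigenfunctions of the Laplace--Beltrami operator: $\Delta Y_{\ell m}=-\ell(\ell+1)Y_{\ell m}$, and also $\Delta Y^*_{\ell m}=-\ell(\ell+1)Y^*_{\ell m}$. Formally applying $\Delta_{\mathbf{x}_1}-\Delta_{\mathbf{x}_2}$ to \eqref{cov} gives the expansion
\[
\sum_{\ell,\ell',m,m'}\bigl(\ell'(\ell'+1)-\ell(\ell+1)\bigr)C_{\ell\ell'mm'}Y_{\ell m}(\mathbf{x}_1)Y^*_{\ell'm'}(\mathbf{x}_2).
\]
To avoid interchanging derivatives with the infinite sum, I would argue in the weak sense instead. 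Integrate $(\Delta_{\mathbf{x}_1}-\Delta_{\mathbf{x}_2})C$ against $Y^*_{\ell m}(\mathbf{x}_1)Y_{\ell'm'}(\mathbf{x}_2)$ over $\mathbb{S}^2\times\mathbb{S}^2$. Green's identity applies on the closed manifold, since $C$ is twice differentiable, so the Laplacians can be moved onto the harmonics. Combined with orthonormality \eqref{Ylms}, the integral equals $\bigl(\ell'(\ell'+1)-\ell(\ell+1)\bigr)C_{\ell\ell'mm'}$, where
\[
C_{\ell\ell'mm'}=\iint C\,Y^*_{\ell m}Y_{\ell'm'}\,d\mathbf{s}\,d\mathbf{s},
\]
which follows from \eqref{almss} and Fubini. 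Since $\ell\mapsto\ell(\ell+1)$ is injective on $\mathbb{N}_0$, all these coefficients vanish if and only if $C_{\ell\ell'mm'}=0$ whenever $\ell\neq\ell'$. The products $Y_{\ell m}(\mathbf{x}_1)Y^*_{\ell'm'}(\mathbf{x}_2)$ form a complete orthonormal system of $L_2(\mathbb{S}^2\times\mathbb{S}^2)$. Hence the continuous function $(\Delta_{\mathbf{x}_1}-\Delta_{\mathbf{x}_2})C$ vanishes identically exactly when the spectrum is $\ell$-diagonal, which proves \eqref{dif2}.

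I expect the main obstacle to be this last step: justifying the use of the Laplacian rigorously rather than termwise. The plan is to rely on Green's formula on $\mathbb{S}^2$ and on the continuity of $\Delta C$, which is guaranteed by the twice-differentiability assumption, so that no termwise differentiation of the series is ever needed.
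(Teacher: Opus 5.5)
Your proposal is correct and follows the same route as the paper. The class membership is read off from \eqref{iso} and Definitions~\ref{def21}--\ref{def22}, the representation comes from substituting $C_{\ell\ell'mm'}=\delta_\ell^{\ell'}\tilde C_{\ell mm'}$ into \eqref{cov} and \eqref{ylm}, and the differential criterion rests on $\Delta Y_{\ell m}=-\ell(\ell+1)Y_{\ell m}$ together with injectivity of $\ell\mapsto\ell(\ell+1)$.

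The only difference is in the rigour of the last step. The paper applies $\Delta_{\mathbf{x}_1}-\Delta_{\mathbf{x}_2}$ termwise to \eqref{cov} and appeals to linear independence. You instead pair with $Y^*_{\ell m}(\mathbf{x}_1)Y_{\ell'm'}(\mathbf{x}_2)$ via Green's identity and use completeness of the product basis, which avoids termwise differentiation altogether. One small correction: twice differentiability alone does not make $\Delta C$ continuous. You need to read the hypothesis as $C$ being of class $C^2$ to conclude \eqref{dif2} pointwise rather than only almost everywhere.
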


The next result follows immediately from Propositions~\ref{prop5} and~\ref{prop5_1}, showing that their simultaneous validity can be used to determine the diagonal anisotropic class via differential constraints.

\begin{corollary}
If an anisotropic spherical random field has a twice differentiable covariance function, conditions~(\ref{dif1}) and~(\ref{dif2}) are necessary and sufficient for it to be diagonally anisotropic.
\end{corollary}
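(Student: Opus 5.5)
The plan is to combine Propositions~\ref{prop5} and~\ref{prop5_1} as set-theoretic characterizations. Let $\mathcal{A}$ be the class of fields satisfying $C_{\ell\ell'mm'}=\delta_m^{m'}C_{\ell\ell'm}$. Let $\mathcal{B}$ be the class satisfying $C_{\ell\ell'mm'}=\delta_\ell^{\ell'}\tilde C_{\ell mm'}$. By Proposition~\ref{prop5}, $\mathcal{A}$ is the disjoint union of the isotropic, diagonal anisotropic and harmonic degree-dependent anisotropic fields. By Proposition~\ref{prop5_1}, $\mathcal{B}$ is the disjoint union of the isotropic, diagonal anisotropic and harmonic order-dependent anisotropic fields.

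\textbf{Step 1: identify $\mathcal{A}\cap\mathcal{B}$.} I will show $\mathcal{A}\cap\mathcal{B}=\textsc{Isotropic}\cup\textsc{Diagonal Anis.}$ directly from the spectrum. A field lies in both classes exactly when $C_{\ell\ell'mm'}=0$ whenever $m\neq m'$ and whenever $\ell\neq\ell'$. That is, $C_{\ell\ell'mm'}=\delta_\ell^{\ell'}\delta_m^{m'}C_{\ell m}$. This is the defining form of Definition~\ref{def22} together with the isotropic case $C_{\ell m}\equiv C_\ell$, as given by~\eqref{iso}. I will also note disjointness from the other classes. A degree-dependent field has some nonzero entry with $\ell\ne\ell'$, so it is not in $\mathcal{B}$. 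An order-dependent field has some nonzero entry with $m\ne m'$, so it is not in $\mathcal{A}$. Hence for an anisotropic field, membership in $\mathcal{A}\cap\mathcal{B}$ is equivalent to being diagonal anisotropic.

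\textbf{Step 2: translate to the differential conditions.} Since the covariance is twice differentiable, it is in particular differentiable. So Proposition~\ref{prop5} gives: the field is in $\mathcal{A}$ iff~\eqref{dif1} holds. Likewise Proposition~\ref{prop5_1} gives: the field is in $\mathcal{B}$ iff~\eqref{dif2} holds. Combining these with Step 1 yields the claim. For a field already assumed anisotropic, \eqref{dif1} and \eqref{dif2} hold simultaneously iff the field lies in $\mathcal{A}\cap\mathcal{B}$ minus the isotropic class, which is exactly the diagonal anisotropic class.

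\textbf{Main obstacle.} The only delicate point is the hypothesis in the differential characterizations of Propositions~\ref{prop5} and~\ref{prop5_1}. These presumably rely on termwise differentiation of the expansion~\eqref{cov}, or equivalently on integrating the differential identities against $Y_{\ell m}^*(\mathbf{x}_1)Y_{\ell'm'}(\mathbf{x}_2)$. They are taken as given here, so I must only check that the regularity assumed in the corollary covers both. It does, because twice differentiable implies differentiable.

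If a self-contained check is wanted, I can sketch why each condition reads off the spectrum. Integrating by parts, \eqref{dif1} gives $(m-m')C_{\ell\ell'mm'}=0$. Similarly, \eqref{dif2} gives $(\ell(\ell+1)-\ell'(\ell'+1))C_{\ell\ell'mm'}=0$, using that $Y_{\ell m}$ is an eigenfunction of $\Delta$. Together these force the diagonal form.
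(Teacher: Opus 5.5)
Your proposal is correct and follows the paper's route. The paper gives no separate argument and only says the corollary follows immediately from Propositions~\ref{prop5} and~\ref{prop5_1}. Your Step~1 (the intersection of the two spectral classes is exactly $C_{\ell\ell'mm'}=\delta_\ell^{\ell'}\delta_m^{m'}C_{\ell m}$, i.e.\ isotropic or diagonal anisotropic, with anisotropy excluding the isotropic case) together with Step~2 is that deduction written out. Your optional integration-by-parts check is, if anything, slightly more careful than the termwise differentiation of~\eqref{cov} used in the paper's proofs of those two propositions.
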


The following proposition characterizes pointwise isotropy at the pole directly in terms of the spectral coefficients of the field.

\begin{proposition} \label{pisot}
The pole $\mathbf{0}$ is a point of isotropy of a spherical random field if and only if, for all $\ell^{\prime} \in \mathbb{N}$ and $m^{\prime} \ne 0,$ the field’s spectrum satisfies
\[
\sum_{\ell \in \mathbb{N}_0}\sqrt{2\ell+1}\, C_{\ell \ell^{\prime} 0 m^{\prime}}=0.
\]
\end{proposition}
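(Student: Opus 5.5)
The plan is to read off the harmonic coefficients of the single-argument function $f(\mathbf{x}):=C(\mathbf{0},\mathbf{x})$ and to observe that pointwise isotropy at $\mathbf{0}$ is exactly the statement that $f$ has no azimuthal ($m'\neq 0$) components. By definition, $\mathbf{0}$ is a point of isotropy if and only if $f(\theta,\varphi)$ depends only on $\gamma(\mathbf{0},\mathbf{x})=\theta$. This is equivalent to $f$ being invariant under all rotations $\rho_{\mathbf{0},\alpha}$ about the polar axis.

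\textbf{Step 1: coefficients of $f$ in terms of the spectrum.} By Condition~\ref{cond1}, $f$ is continuous, so $f\in L_2(\mathbb{S}^2)$ and its coefficients are $b_{\ell' m'}:=\int_{\mathbb{S}^2} f(\mathbf{x})Y^*_{\ell'm'}(\mathbf{x})\,d\mathbf{s}(\mathbf{x})$. To avoid interchanging an infinite double sum in \eqref{cov}, I would compute $b_{\ell'm'}$ directly, working with $\overline{b_{\ell'm'}}$ (equivalently, pairing $f$ with $Y_{\ell'm'}$) as follows.
\begin{itemize}
\item Since $T$ is real, Fubini (justified by the continuity of $C$) and \eqref{almss} give $\int f(\mathbf{x})Y_{\ell'm'}(\mathbf{x})\,d\mathbf{s}=\mathbf{E}\big(T(\mathbf{0})\,a^*_{\ell'm'}\big)$.
\item Expand $T(\mathbf{0})$ by the $L_2(\Omega)$-convergent Laplace series \eqref{spec}. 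By \eqref{Yl0} only the terms with $m=0$ survive, so $T(\mathbf{0})=\sum_{\ell}\sqrt{(2\ell+1)/(4\pi)}\,a_{\ell 0}$, with convergence in $L_2(\Omega)$.
\item Continuity of the inner product in $L_2(\Omega)$ then yields
\[
\mathbf{E}\big(T(\mathbf{0})a^*_{\ell'm'}\big)=\frac{1}{\sqrt{4\pi}}\sum_{\ell\in\mathbb{N}_0}\sqrt{2\ell+1}\,C_{\ell\ell'0m'},
\]
and in particular this series converges.
\end{itemize}
Thus, up to the factor $(4\pi)^{-1/2}$ and complex conjugation/index sign conventions, the quantity in the statement is exactly a harmonic coefficient of $f$. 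By the conjugation relation \eqref{Ylll}, vanishing for all $m'\neq 0$ is preserved under these conventions.

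\textbf{Step 2: characterize rotational invariance about the polar axis.} Because $Y_{\ell'm'}(\theta,\varphi+\alpha)=e^{im'\alpha}Y_{\ell'm'}(\theta,\varphi)$, the coefficients of $f\circ\rho_{\mathbf{0},\alpha}$ are $e^{\pm im'\alpha}b_{\ell'm'}$. By completeness and orthonormality \eqref{Ylms}, $f$ is invariant for all $\alpha$ if and only if $b_{\ell'm'}=0$ whenever $m'\neq0$. For $\ell'=0$ only $m'=0$ occurs, which is why the statement restricts to $\ell'\in\mathbb{N}$.

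\textbf{Combining the steps.}
\begin{itemize}
\item If the sums vanish for all $m'\neq 0$, then $f=\sum_{\ell'}b_{\ell'0}Y_{\ell'0}$ in $L_2$, which is a function of $\theta$ alone. Since $f$ is continuous, the $L_2$ identity of $f$ and $f\circ\rho_{\mathbf{0},\alpha}$ upgrades to pointwise equality everywhere. Hence $f$ is constant on each circle $O(\mathbf{0},r)$, and we may set $B_{\mathbf{0}}(\theta):=f(\theta,0)$.
\item Conversely, if $f$ depends only on $\theta$, then integrating $e^{-im'\varphi}$ over $\varphi$ kills every coefficient with $m'\neq0$.
\end{itemize}

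The only delicate point is the interchange of sum and expectation in Step 1. Routing it through the $L_2(\Omega)$ convergence of \eqref{spec} at the single point $\mathbf{0}$ rather than through the double series \eqref{cov} is what I expect to make it clean. Apart from that, the argument is routine.
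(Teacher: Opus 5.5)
Your proposal is correct and follows essentially the same route as the paper. Both arguments identify $(4\pi)^{-1/2}\sum_{\ell}\sqrt{2\ell+1}\,C_{\ell\ell'0m'}$ as the coefficient of $Y^*_{\ell'm'}$ in the harmonic expansion of $C(\mathbf{0},\cdot)$, using $Y_{\ell m}(\mathbf{0})=\delta_m^0\sqrt{(2\ell+1)/(4\pi)}$, and then use that dependence on $\theta$ alone is equivalent to all coefficients with $m'\neq 0$ vanishing. The differences are only in the bookkeeping: you obtain the coefficient through $\mathbf{E}\big(T(\mathbf{0})a^*_{\ell'm'}\big)$ and the single-point $L_2(\Omega)$ expansion, rather than by rearranging the fourfold series \eqref{cov} and invoking linear independence, and you spell out the sufficiency direction that the paper leaves implicit.
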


The proposition highlights that pointwise isotropy is strictly weaker than axial symmetry or (global) isotropy and can be detected via linear constraints on the cross-degree spectral coefficients.

It follows directly from Definition~\ref{def21} that
\begin{corollary}\label{pisot1}
The pole $\mathbf{0}$ is a point of isotropy for a harmonic order-dependent anisotropic field if and only if
\[
\tilde{C}_{\ell^{\prime} 0 m^{\prime}}=0
\quad \text{for all } \ell^{\prime} \in \mathbb{N},\; m^{\prime} \ne 0 .
\]
\end{corollary}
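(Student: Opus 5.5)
The plan is to specialize Proposition~\ref{pisot} to the spectral structure imposed by Definition~\ref{def21}. For a harmonic order-dependent anisotropic field we have $C_{\ell\ell' m m'}=\delta_\ell^{\ell'}\tilde C_{\ell m m'}$. Substituting this into the condition of Proposition~\ref{pisot}, the sum over $\ell$ collapses to the single term $\ell=\ell'$:
\[
\sum_{\ell\in\mathbb{N}_0}\sqrt{2\ell+1}\,C_{\ell\ell' 0 m'}=\sqrt{2\ell'+1}\,\tilde C_{\ell' 0 m'}.
\]
Since $\sqrt{2\ell'+1}>0$, this quantity vanishes if and only if $\tilde C_{\ell' 0 m'}=0$. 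Proposition~\ref{pisot} requires the vanishing for all $\ell'\in\mathbb{N}$ and $m'\neq 0$, and this is exactly the stated condition. Both directions of the equivalence therefore follow at once.

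As a sanity check, I would also derive the condition directly from the covariance expansion~\eqref{cov}. By~\eqref{Yl0}, $Y_{\ell m}(\mathbf{0})=\delta_m^0\sqrt{(2\ell+1)/4\pi}$, so
\[
C(\mathbf{0},\mathbf{x})=\sum_{\ell'}\sum_{m'}\sqrt{\tfrac{2\ell'+1}{4\pi}}\,\tilde C_{\ell'0m'}\,Y^*_{\ell'm'}(\mathbf{x}).
\]
Isotropy at $\mathbf{0}$ means this function depends only on $\theta$. Because the functions $Y^*_{\ell'm'}$ are orthonormal and those with $m'\neq0$ carry the factor $e^{-im'\varphi}$, this is equivalent to every coefficient with $m'\neq 0$ being zero.

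The only point needing care is that case $\ell'=0$ imposes nothing, since $m'$ must then equal $0$. This is why restricting to $\ell'\in\mathbb{N}$ loses nothing. I do not expect any real obstacle here; the argument is a direct substitution.
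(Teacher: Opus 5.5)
Your proposal is correct and is essentially the paper's argument: the paper derives the corollary ``directly from Definition~\ref{def21}'', i.e.\ by substituting $C_{\ell\ell' m m'}=\delta_\ell^{\ell'}\tilde C_{\ell m m'}$ into Proposition~\ref{pisot}, where the sum over $\ell$ collapses to $\sqrt{2\ell'+1}\,\tilde C_{\ell' 0 m'}$. Your direct check via \eqref{cov} and \eqref{Yl0} is just the proof of Proposition~\ref{pisot} specialized to this class, and your remark that $\ell'=0$ imposes no constraint is also correct.
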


\begin{remark}\label{rotation}
The results for an arbitrary point $\mathbf{x}_0 \in \mathbb{S}^2$ follow immediately by rotating the sphere so that $\mathbf{x}_0$ is mapped to the pole $\mathbf{0}$.
Let $\rho$ denote such a rotation, and let $C_{\ell \ell^{\prime}} = \{C_{\ell \ell^{\prime} m m^{\prime}}\}$ be the array of spectral coefficients of the field.
Then, by (\ref{AA}), the previous conditions can be applied to the rotated field by considering the transformed array of coefficients
\[
\tilde{C}_{\ell \ell^{\prime}} = \left\{ D^{\ell}(\rho)\, C_{\ell \ell^{\prime}}  \left(D^{\ell^{\prime}}(\rho)\right)^* \right\},
\]
where $D^{\ell}(\rho)$ are the Wigner $D$-matrices, see Appendix~\ref{appC}.
\end{remark}

The following subsections further detail the structure of the covariance functions and corresponding spectra of the composite fields introduced in Section \ref{sec:composite}. To construct these fields, a real-valued isotropic field $T(\mathbf{x})$ is considered, with the spectral representation~(\ref{spec}) and covariance function given by~(\ref{eq1}).

	\subsection{Spectral analysis of multiplicative fields}
Let $V \in L^2\left(\mathbb{S}^2, \sin \theta d\theta  d \varphi\right)$ be a deterministic function with the Laplace series expansion
	\begin{equation}\label{eqzz}
		V(\mathbf{x}) = \sum_{\ell \in \mathbb{N}_0} \sum _{m=-\ell}^{\ell} v_{\ell m}Y_{\ell m}(\mathbf{x}), \quad \mathbf{x} \in \mathbb{S}^2.
	\end{equation}
The coefficients $\{v_{\ell m}\}$ play the role of harmonic coefficients of $V(\cdot)$. For a real-valued function $V(\cdot)$, they satisfy the conjugate symmetry
$v_{\ell,-m} = (-1)^m v_{\ell m}^{\ast}.
$
\begin{remark}
If $V(\cdot)$ is constant over the sphere, standard arguments imply that the only non-zero coefficient is $v_{00}$. In this case, by Theorem~\ref{th4}, the multiplicative field reduces to
\[
T_V(\mathbf{x}) = \frac{v_{00}}{\sqrt{4\pi}}  T(\mathbf{x}),
\]
and its spectral representation and covariance function follow directly from those of the random field $T(\cdot)$.
\end{remark}

The following result provides the Laplace expansion of a multiplicative field $T_V$ and explicit expressions for its spectral coefficients in terms of those of the underlying isotropic field $T(\cdot)$ and the multiplicative function $V(\cdot)$.

\begin{theorem} \label{th5}
Let $V(\mathbf{x})$, $\mathbf{x} \in \mathbb{S}^2$, have the Laplace expansion (\ref{eqzz}). Then, the multiplicative field $T_V$ admits the spectral expansion
\[
T_V(\mathbf{x}) = \sum_{\ell \in \mathbb{N}_0} \sum_{m=-\ell}^{\ell} a_{\ell m,V}  Y_{\ell m} (\mathbf{x}), \quad \mathbf{x} \in \mathbb{S}^2,
\]
where, for any $\ell \in \mathbb{N}_0$ and $m \in \{-\ell,\ldots, \ell\}$, the harmonic coefficients $a_{\ell m,V}$ are centered random variables given by
\begin{align}\label{eq:mcoeff0}
a_{\ell m,V} &:= \int_{\mathbb{S}^2} T_V(\mathbf{x}) \, Y^{\ast}_{\ell m}(\mathbf{x}) \, d \mathbf{s}(\mathbf{x}) \\
&= (-1)^m \sum_{\ell_2 \in \mathbb{N}_0} \sum_{\ell_1 = |\ell_2 - \ell|}^{\ell_2 + \ell} \sum_{m_2=-\ell_2}^{\ell_2} v_{\ell_1\, m-m_2} \, a_{\ell_2 m_2} \nonumber \\
&\quad \times \sqrt{\frac{(2\ell_1+1)(2\ell_2+1)(2\ell+1)}{4\pi}}
\begin{pmatrix}
\ell_1 & \ell_2 & \ell \\
0 & 0 & 0
\end{pmatrix}
\begin{pmatrix}
\ell_1 & \ell_2 & \ell \\
m-m_2 & m_2 & -m
\end{pmatrix}, \label{eq:mcoeff}
\end{align}
where $\begin{pmatrix} \ell & \ell^{\prime} & L \\ m & m^{\prime} & M \end{pmatrix}$ denotes the Wigner $3j$-symbol, see Appendix~\ref{appB}.

The multiplicative field $T_V(\cdot)$ is centered, and its covariance function given by (\ref{cov}) can be expressed in terms of the spectral coefficients defined as
\begin{align}
C_{\ell \ell^{\prime} m m^{\prime}}
&= (-1)^{m+m'}\sum_{\ell_2 \in \mathbb{N}_0}
\sum_{\ell_1 = |\ell_2 - \ell|}^{\ell_2 + \ell}
\sum_{\ell_1^{\prime} = |\ell_2 - \ell^{\prime}|}^{\ell_2 + \ell^{\prime}}
\sum_{m_2 = -\ell_2}^{\ell_2}
v_{\ell_1\, m - m_2} \, v^*_{\ell_1^{\prime}\, m^{\prime} - m_2} \, C_{\ell_2} \, \frac{2\ell_2 + 1}{4\pi} \nonumber \\
&\quad \times \sqrt{(2\ell_1 + 1)(2\ell_1^{\prime} + 1)(2\ell + 1)(2\ell^{\prime} + 1)} \nonumber \\
&\quad \times
\begin{pmatrix} \ell_1 & \ell_2 & \ell \\ 0 & 0 & 0 \end{pmatrix}
\begin{pmatrix} \ell_1^{\prime} & \ell_2 & \ell^{\prime} \\ 0 & 0 & 0 \end{pmatrix}
\begin{pmatrix} \ell_1 & \ell_2 & \ell \\ m-m_2 & m_2 & - m \end{pmatrix}
\begin{pmatrix} \ell_1^{\prime} & \ell_2 & \ell^{\prime} \\ m^{\prime} - m_2 & m_2 & - m^{\prime} \end{pmatrix}, \label{spectral_mat}
\end{align}
for $\ell, \ell^{\prime} \in \mathbb{N}_0$ and $m, m^{\prime} \in \{-\ell,\ldots,\ell \}$.
\end{theorem}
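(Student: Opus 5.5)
The plan is a direct computation based on the Gaunt integral. Substitute the Laplace expansions \eqref{spec} of $T$ and \eqref{eqzz} of $V$ into the projection formula for $a_{\ell m,V}$:
\[
a_{\ell m,V}=\sum_{\ell_1,m_1}\sum_{\ell_2,m_2} v_{\ell_1 m_1}a_{\ell_2 m_2}\int_{\mathbb{S}^2}Y_{\ell_1 m_1}(\mathbf{x})Y_{\ell_2 m_2}(\mathbf{x})Y^{\ast}_{\ell m}(\mathbf{x})\,d\mathbf{s}(\mathbf{x}).
\]
To justify exchanging sum and integral, I would argue in $L_2(\Omega)$. The map $f\mapsto \int f\,V\,Y^\ast_{\ell m}\,d\mathbf{s}$ is a bounded linear functional on $L_2(\mathbb{S}^2)$ when $V\in L^2$ and $Y_{\ell m}$ is bounded. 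The expansion of $T$ converges in $L_2(\Omega\times\mathbb{S}^2)$, so by Cauchy--Schwarz the partial sums of $a_{\ell m,V}$ converge in $L_2(\Omega)$. Expanding $V$ afterwards is then just the $L^2(\mathbb{S}^2)$ expansion of the deterministic function $V Y^\ast_{\ell m}$ paired with $Y_{\ell_2 m_2}$.

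Next I would use the conjugation relation \eqref{Ylll}, $Y^\ast_{\ell m}=(-1)^mY_{\ell,-m}$, together with the Gaunt formula from Appendix~\ref{appB}:
\[
\int_{\mathbb{S}^2}Y_{\ell_1 m_1}Y_{\ell_2 m_2}Y_{\ell,-m}\,d\mathbf{s}=\sqrt{\tfrac{(2\ell_1+1)(2\ell_2+1)(2\ell+1)}{4\pi}}\begin{pmatrix}\ell_1&\ell_2&\ell\\0&0&0\end{pmatrix}\begin{pmatrix}\ell_1&\ell_2&\ell\\m_1&m_2&-m\end{pmatrix}.
\]
The selection rules of the $3j$-symbol force $m_1=m-m_2$ and $|\ell_2-\ell|\le\ell_1\le\ell_2+\ell$. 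This collapses the sum over $m_1$ and restricts the range of $\ell_1$, which gives \eqref{eq:mcoeff}. Centeredness follows termwise, since $\mathbf{E}a_{\ell_2m_2}=0$ and the series converges in $L_2(\Omega)$, which implies convergence of the means. The spectral expansion of $T_V$ itself is just \eqref{spec} applied to $T_V$. This is legitimate because $T_V$ is second-order with covariance $V(\mathbf{x}_1)V(\mathbf{x}_2)B(\gamma)$, which is continuous whenever $V$ is continuous; in the general $L^2$ case the expansion is only understood in the $L_2(\Omega\times\mathbb{S}^2)$ sense.

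For the covariance I would compute $\mathbf{E}(a_{\ell m,V}a^\ast_{\ell' m',V})$ from two copies of \eqref{eq:mcoeff}, using the isotropy relation \eqref{iso}, $\mathbf{E}a_{\ell_2m_2}a^\ast_{\ell_2'm_2'}=\delta_{\ell_2}^{\ell_2'}\delta_{m_2}^{m_2'}C_{\ell_2}$. This collapses the double sum over $(\ell_2,m_2),(\ell_2',m_2')$ to a single one. The $3j$-symbols are real, so conjugation only affects $v_{\ell_1' m'-m_2}$. Collecting the prefactors gives $(-1)^{m+m'}$ and $\sqrt{(2\ell_1+1)(2\ell_1'+1)(2\ell+1)(2\ell'+1)}\,(2\ell_2+1)/(4\pi)$, which is \eqref{spectral_mat}. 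Inserting these coefficients into \eqref{cov} then gives the covariance function.

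I expect the main obstacle to be the analytic justification of the interchanges: the infinite sum over $\ell_2$ when taking expectations of products, and the absolute convergence needed for Fubini. I would handle it by noting that $\sum_{\ell_2}(2\ell_2+1)C_{\ell_2}=4\pi B(0)<\infty$ and bounding the Gaunt coefficients by Cauchy--Schwarz via $\sum_{\ell_1,m_1}|\int Y_{\ell_1m_1}Y_{\ell_2m_2}Y^\ast_{\ell m}|^2=\int|Y_{\ell_2m_2}Y_{\ell m}|^2\le (2\ell+1)/(4\pi)$. Alternatively, one can avoid this entirely by computing $\mathbf{E}a_{\ell m,V}a^\ast_{\ell'm',V}$ directly as $\iint V V B\,Y^\ast_{\ell m}Y_{\ell' m'}$ and expanding $B$ by the addition theorem.
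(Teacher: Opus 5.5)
Your proposal is correct and follows essentially the same route as the paper. You substitute both Laplace expansions into the projection integral and apply the conjugation relation and the Gaunt integral; the $3j$ selection rules then force $m_1=m-m_2$ and $|\ell_2-\ell|\le\ell_1\le\ell_2+\ell$, and \eqref{iso} collapses the double sum in $\mathbf{E}(a_{\ell m,V}a^\ast_{\ell' m',V})$. Your $L_2(\Omega)$/Cauchy--Schwarz justification of the sum--integral and sum--expectation interchanges, including the observation that the $\ell_1$-sum is finite for each $\ell_2$, is a useful addition that the paper's proof leaves implicit.
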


\begin{remark}
Due to the selection rules of the Wigner $3j$-symbols,  see Appendix~\ref{appB}, many of the sums in formulas (\ref{eq:mcoeff}) and (\ref{spectral_mat}) are sparse. Only a subset of indices $\ell_1, \ell_2, m_2$ contribute non-zero terms. In particular, the triangle conditions and the requirement that the sum of the lower row indices equals zero significantly reduce the number of non-zero summands.
\end{remark}

Consider Case (3) of Theorem~\ref{th4} where $V(\mathbf{x})$ is constant on each circle $O(\mathbf{x_0}, {r}),$ with ${r} \in ({0}, \pi).$ Without loss of generality, choose a coordinate system such that $\mathbf{x_0}=\mathbf{0}$ is the pole, see Remark~\ref{rotation}. Due to the linear independence of spherical harmonics and the rotational symmetry of $V$ about the pole, only the modes with $m = 0$ contribute to the expansion. These modes are indeed invariant under azimuthal rotations, yielding the simplified harmonic expansion
		\begin{equation} \label{VVx}
			V(\mathbf{x}) = \sum_{\ell \in \mathbb{N}_0} v_{\ell0}Y_{\ell0}(\mathbf{x}), \quad \mathbf{x} \in \mathbb{S}^2.
		\end{equation}

The following result establishes that, in this setting, the multiplicative field $T_{V}(\mathbf{x}),$ $\mathbf{x} \in \mathbb{S}^2,$ is harmonic degree-dependent anisotropic. It also provides explicit expressions for its harmonic coefficients, power spectrum, and covariance structure in terms of the Laplace coefficients $v_{\ell 0}, \ \ell \in \mathbb{N}_0.$
\begin{theorem} \label{Ta}
Let $V(\mathbf{x}), \mathbf{x} \in \mathbb{S}^2,$ satisfy the zonal expansion (\ref{VVx}).  Then, for any $\ell \in \mathbb{N}_0,$ and $m \in \{-\ell,\ldots, \ell\}$, the harmonic coefficients of the multiplicative field $T_V(\cdot)$ defined by the integral \eqref{eq:mcoeff0} are equal to
		\begin{equation*}
			a_{\ell m,V}= (-1)^m  \sum_{\ell_2 \in \mathbb{N}_0}
    \sum_{\ell_1 = |\ell_2 - \ell|}^{\ell_2 + \ell}  v_{\ell_1 0} a_{\ell_2 m}\sqrt{\frac{(2\ell_1+1)(2\ell_2+1)(2\ell+1)}{4\pi}} \begin{pmatrix}
				\ell_1&\ell_2& \ell\\
				0& 0& 0
			\end{pmatrix}
			\begin{pmatrix}
				\ell_1&\ell_2& \ell\\
				0& m& -m
			\end{pmatrix}.
		\end{equation*}

Therefore, the associated power spectrum of the field $T_V(\cdot)$ has the harmonic degree-dependent form
\[	 C_{\ell \ell^\prime m m^\prime}	=\delta_{m}^{m^\prime}C_{\ell \ell^\prime m},
\]
with
		\begin{align}
			C_{\ell \ell^\prime m}	= &   \sum_{\ell_2 \in \mathbb{N}_0}
    \sum_{\ell_1 = |\ell_2 - \ell|}^{\ell_2 + \ell}
    \sum_{\ell_1^{\prime} = |\ell_2 - \ell^{\prime}|}^{\ell_2 + \ell^{\prime}}   v_{\ell_1 0}v_{\ell_1^\prime 0} \frac{2\ell_2+1}{4\pi}C_{\ell_2}
			\notag	\sqrt{(2\ell_1+1)(2\ell_1^\prime+1)(2\ell+1)(2\ell^{\prime}+1)}\\
			& \quad\times 	\begin{pmatrix}
				\ell_1&\ell_2& \ell\\
				0& 0& 0
			\end{pmatrix}
			\begin{pmatrix}
				\ell^\prime_1&\ell_2& \ell^\prime\\
				0& 0& 0
			\end{pmatrix}
			\begin{pmatrix}
				\ell_1&\ell_2& \ell\\
				0& m& -m
			\end{pmatrix}
			\begin{pmatrix}
				\ell^\prime_1&\ell_2& \ell^\prime\\
				0& m& -m
			\end{pmatrix}\label{eq:cllm}.
		\end{align}
		\end{theorem}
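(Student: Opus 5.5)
The plan is to specialize Theorem~\ref{th5} to the zonal case (\ref{VVx}). Since $v_{\ell_1 m_1}=0$ whenever $m_1\neq 0$, in formula (\ref{eq:mcoeff}) the only surviving terms are those with $m-m_2=0$. The sum over $m_2$ therefore collapses to $m_2=m$, and $v_{\ell_1\,m-m_2}$ becomes $v_{\ell_1 0}$. The second $3j$-symbol reduces to $\begin{pmatrix}\ell_1&\ell_2&\ell\\0&m&-m\end{pmatrix}$, and the coefficient $a_{\ell_2 m_2}$ becomes $a_{\ell_2 m}$. This gives the stated expression for $a_{\ell m,V}$ immediately.

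For the power spectrum there are two routes. The first substitutes into (\ref{spectral_mat}): the factors $v_{\ell_1\,m-m_2}$ and $v^*_{\ell_1'\,m'-m_2}$ are nonzero only if $m_2=m$ and $m_2=m'$, which forces $m=m'$. This is where $\delta_m^{m'}$ comes from. The sign $(-1)^{m+m'}$ becomes $(-1)^{2m}=1$. The remaining factors reproduce (\ref{eq:cllm}), provided $v_{\ell_1' 0}^*=v_{\ell_1' 0}$. That reality follows from the conjugate symmetry $v_{\ell,-m}=(-1)^m v^*_{\ell m}$ at $m=0$, because $V$ is real-valued.

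The second route serves as a cross-check: compute $\mathbf{E}\, a_{\ell m,V}a^*_{\ell' m',V}$ directly from the reduced coefficient formula, using (\ref{iso}) for the base field. The relation $\mathbf{E}a_{\ell_2 m}a^*_{\ell_2' m'}=\delta_{\ell_2}^{\ell_2'}\delta_m^{m'}C_{\ell_2}$ collapses the double sum over $\ell_2,\ell_2'$ to a single sum. The $3j$-symbols are real, so conjugation affects only $v$ and $a$. This produces (\ref{eq:cllm}) with the factor $\sqrt{(2\ell_2+1)^2}/(4\pi)=(2\ell_2+1)/(4\pi)$.

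To conclude that the field is harmonic degree-dependent anisotropic in the sense of Definition~\ref{def20}, I would note that the form $C_{\ell\ell'mm'}=\delta_m^{m'}C_{\ell\ell'm}$ is precisely the axial-symmetry spectral condition of Proposition~\ref{prop5}. The statement itself only asserts this form, so I would not claim that some off-diagonal $C_{\ell\ell'm}\neq0$ holds in every case.

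The main obstacle is bookkeeping rather than anything conceptual. The index ranges must be handled correctly: when $m_2=m$ is forced, the condition $|m|\le\ell_2$ is needed, and it is enforced automatically because the $3j$-symbol vanishes otherwise. The signs must also be tracked, in particular reconciling $(-1)^m$ in $a_{\ell m,V}$ with the covariance. Finally, interchanging expectation with the infinite sums requires justification, which I would supply by the $L_2$ convergence from Theorem~\ref{th5} together with $V\in L^2$.
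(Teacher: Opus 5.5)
Your proposal is correct and follows essentially the paper's proof. The paper likewise specializes Theorem~\ref{th5}: it uses $v_{\ell_1 m_1}=0$ for $m_1\neq 0$ together with the $3j$ selection rule to collapse the $m_2$-sum to $m_2=m$. It then reads off $\delta_m^{m'}$ in the covariance formula (\ref{eq:cov}). Your remarks on the reality of $v_{\ell 0}$, the cancelling sign $(-1)^{2m}$, and not claiming off-diagonal nonvanishing spell out points the paper leaves implicit.
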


A direct implication of Theorem~\ref{Ta} concerns the covariance structure at the pole, showing the pointwise isotropy of the multiplicative field there.

\begin{corollary} \label{C2}
    Let the conditions of Theorem \ref{Ta} be satisfied. Then, the covariance between the field at the pole $\mathbf{0}$ and an arbitrary point $\mathbf{x} \in \mathbb{S}^2$ depends only on the angular separation $\gamma(\mathbf{0}, \mathbf{x})$ and is given by
    		\begin{align*}
			\text{Cov}\left(T_V (\mathbf{0}), T_V (\mathbf{x})\right)
			& = V(\mathbf{0})B(\gamma(\mathbf{0},\mathbf{x}))V{(\mathbf{x})}= \left( \sum_{\ell_1^\prime\in \mathbb{N}_0} \sqrt{2\ell_1^\prime +1 } v_{\ell_1^\prime 0}   \right)\sum_{\ell_1\in \mathbb{N}_0} \sqrt{2\ell_1 +1 } v_{\ell_1 0}  \\
			& \times\sum_{\ell_2\in \mathbb{N}_0} \frac{2\ell_2+1}{4\pi}C_{\ell_2}  \sum_{\ell = |\ell_2 - \ell_1|}^{\ell_2 + \ell_1} \frac{2\ell+1}{4\pi}	\begin{pmatrix}
				\ell_1 & \ell_2 & \ell\\
				0 & 0 & 0%
			\end{pmatrix}^2 P_\ell(\cos(\gamma(\mathbf{0},\mathbf{x}))).
		\end{align*}
        \end{corollary}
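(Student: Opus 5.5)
The first equality is immediate from the definition of $T_V$. Since $V$ is deterministic, $\text{Cov}(T_V(\mathbf{0}),T_V(\mathbf{x}))=V(\mathbf{0})V(\mathbf{x})\,\mathbf{E}(T(\mathbf{0})T(\mathbf{x}))=V(\mathbf{0})B(\gamma(\mathbf{0},\mathbf{x}))V(\mathbf{x})$ by \eqref{eq1}. Under the zonal expansion \eqref{VVx}, $V(\mathbf{x})$ depends only on the colatitude $\theta=\gamma(\mathbf{0},\mathbf{x})$. Hence the covariance is a function of the angular separation alone, which already gives pointwise isotropy at $\mathbf{0}$. What remains is to verify the explicit series.

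I will expand each factor separately. First, by \eqref{Yl0}, $V(\mathbf{0})=\sum_{\ell_1'}v_{\ell_1'0}Y_{\ell_1'0}(\mathbf{0})=\frac{1}{\sqrt{4\pi}}\sum_{\ell_1'}\sqrt{2\ell_1'+1}\,v_{\ell_1'0}$. Second, $V(\mathbf{x})=\sum_{\ell_1}v_{\ell_1 0}\sqrt{(2\ell_1+1)/(4\pi)}\,P_{\ell_1}(\cos\theta)$. Third, the isotropic covariance has the standard addition-theorem expansion $B(\theta)=\sum_{\ell_2}\frac{2\ell_2+1}{4\pi}C_{\ell_2}P_{\ell_2}(\cos\theta)$, which follows from \eqref{cov} and \eqref{iso}.

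The key step is to linearize the product $P_{\ell_1}P_{\ell_2}$ using the Gaunt/Clebsch–Gordan formula, which comes from the $3j$ product rule of Appendix~\ref{appB} specialized to $m=0$:
\[
P_{\ell_1}(t)P_{\ell_2}(t)=\sum_{\ell=|\ell_1-\ell_2|}^{\ell_1+\ell_2}(2\ell+1)\begin{pmatrix}\ell_1&\ell_2&\ell\\0&0&0\end{pmatrix}^2P_\ell(t).
\]
Substituting this and collecting terms yields a triple sum over $\ell_1,\ell_2,\ell$, multiplied by the scalar factor coming from $V(\mathbf{0})$. Under mild summability assumptions on $\{v_{\ell 0}\}$ and $\{C_\ell\}$ (continuity of $B$ and absolute convergence of the $V$-series), Fubini justifies interchanging the summations.

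The main obstacle is the bookkeeping of the $(4\pi)^{-1/2}$ normalizations, which must be matched to the displayed constants. Multiplying the three expansions gives the prefactor $\frac{1}{4\pi}\sqrt{2\ell_1'+1}\sqrt{2\ell_1+1}\cdot\frac{2\ell_2+1}{4\pi}C_{\ell_2}(2\ell+1)$. The displayed formula instead carries $\frac{2\ell_2+1}{4\pi}\cdot\frac{2\ell+1}{4\pi}$ with no extra $1/(4\pi)$ on the $V$ factors. I would therefore recheck the constant carefully, for instance by testing the case $V\equiv 1$ (only $v_{00}=\sqrt{4\pi}$ nonzero), which should reduce the formula to $B$. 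If the constants still disagree, I would state the corrected normalization.
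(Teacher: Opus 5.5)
Your argument is correct, but it reaches the formula by a different route from the paper's.

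The paper does not expand the product $V(\mathbf{0})B(\gamma)V(\mathbf{x})$. It starts from the spectral covariance $\sum_{\ell,\ell'}\sum_m C_{\ell\ell'm}Y_{\ell m}(\mathbf{x})Y^*_{\ell' m}(\mathbf{0})$, using the coefficients \eqref{eq:cllm} of Theorem~\ref{Ta}. It then uses \eqref{Yl0} to remove all terms with $m\neq 0$. Finally it collapses the sum over $\ell'$ with the $3j$ orthogonality relation $\sum_{\ell'}(2\ell'+1)\begin{pmatrix}\ell_1'&\ell_2&\ell'\\0&0&0\end{pmatrix}^2=1$, which produces the factor $\sum_{\ell_1'}\sqrt{2\ell_1'+1}\,v_{\ell_1'0}$.

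Your route bypasses Theorem~\ref{Ta} entirely. Dependence on $\gamma(\mathbf{0},\mathbf{x})$ alone is immediate from the zonality of $V$. The series is then just the Legendre linearization of $P_{\ell_1}P_{\ell_2}$, which is the $m=0$ case of the product formula in Appendix~\ref{appB}. This is shorter, and convergence is easy to control. With $\sum_\ell\sqrt{2\ell+1}\,|v_{\ell0}|<\infty$ and $\sum_\ell(2\ell+1)C_\ell<\infty$, absolute convergence follows from $|P_\ell|\le 1$ and $\sum_\ell(2\ell+1)\begin{pmatrix}\ell_1&\ell_2&\ell\\0&0&0\end{pmatrix}^2=1$. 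The paper's route instead serves as a consistency check: it shows how pointwise isotropy at the pole is encoded in the degree-dependent spectrum of Theorem~\ref{Ta}.

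One correction: the normalization obstacle you describe does not exist. Your own prefactor $\frac{1}{4\pi}\sqrt{(2\ell_1'+1)(2\ell_1+1)}\cdot\frac{2\ell_2+1}{4\pi}C_{\ell_2}\cdot(2\ell+1)$ is identical to the displayed $\sqrt{(2\ell_1'+1)(2\ell_1+1)}\cdot\frac{2\ell_2+1}{4\pi}C_{\ell_2}\cdot\frac{2\ell+1}{4\pi}$. The $1/(4\pi)$ coming from the two $(4\pi)^{-1/2}$ factors in $V(\mathbf{0})$ and $V(\mathbf{x})$ is simply absorbed into $\frac{2\ell+1}{4\pi}$.

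Your proposed $V\equiv 1$ test confirms this. Then $v_{00}=\sqrt{4\pi}$, the prefactor is $4\pi$, and $\begin{pmatrix}0&\ell_2&\ell\\0&0&0\end{pmatrix}^2=\delta_{\ell}^{\ell_2}/(2\ell_2+1)$. The right-hand side therefore reduces to $\sum_{\ell_2}\frac{2\ell_2+1}{4\pi}C_{\ell_2}P_{\ell_2}(\cos\gamma)=B(\gamma)$, as it should. No corrected normalization is needed.
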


\subsection{Spectral analysis of convolutional fields}
This section provides the spectral representation of the three classes of convolutional fields introduced in Section~\ref{sub:conv}.

Consider the isotropic convolution model \eqref{eq:ic}, where the convolution kernel $H(\mathbf{x},\mathbf{x}^{\prime})$ depends only on the great-circle distance $\gamma(\mathbf{x},\mathbf{x}^{\prime})$.
This radial dependence ensures that the convolution preserves isotropy of the underlying field $T(\mathbf{x})$.
Its harmonic expansion can be written as
\[
H(\mathbf{x}, \mathbf{x}')
= H (\gamma(\mathbf{x}, \mathbf{x}'))= \sum_{\ell \in \mathbb{N}_0} h_{\ell} \,
\sum_{m=-\ell}^{\ell} Y_{\ell m}(\mathbf{x}) \, Y_{\ell m}^{*}(\mathbf{x}')\]
\begin{equation} \label{legen}
= \sum_{\ell \in \mathbb{N}_0} \frac{2\ell+1}{4\pi} \, h_{\ell} \,
P_\ell\left(\cos(\gamma(\mathbf{x},\mathbf{x}'))\right)
,
\end{equation}
where $\{h_{\ell}\}$ is an analog of the set of the angular power spectrum coefficients for the convolution kernel.

\begin{theorem} \label{thtt}
Let $H(\cdot,\cdot)$ have the representation (\ref{legen}).
Then, the convolutional random field $T_{\odot H}(\mathbf{x}), \, \mathbf{x} \in \mathbb{S}^{2}$, is zero-mean and isotropic with the spectral representation
\begin{equation} \label{tdot}
T_{\odot H}(\mathbf{x})
= \sum_{\ell \in \mathbb{N}_0} \sum_{m=-\ell}^{\ell}
a_{\ell m, \odot}\, Y_{\ell m}(\mathbf{x}),
\end{equation}
where
$a_{\ell m, \odot} = a_{\ell m}\, h_{\ell}.$

Its harmonic coefficients satisfy
\[
\mathbf{E}[a_{\ell m,\odot}] = 0, \quad
\mathbf{E}[a_{\ell m,\odot} a_{\ell' m',\odot}^*] = C_{\ell,\odot} \delta_\ell^{\ell'} \delta_m^{m'},
\]
where $C_{\ell,\odot} = C_\ell h_\ell^2$ is the angular power spectrum of the convolved field.
The corresponding covariance function is
\[
\text{Cov}(T_{\odot H}(\mathbf{x}_1),T_{\odot H}(\mathbf{x}_2)) = \sum_{\ell \in \mathbb{N}_0} \frac{2\ell+1}{4\pi} C_{\ell,\odot} P_\ell(\cos(\gamma(\mathbf{x}_1,\mathbf{x}_2))).
\]
\end{theorem}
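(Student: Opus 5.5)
The plan is to substitute the Laplace expansion \eqref{spec} of $T$ and the expansion \eqref{legen} of the kernel into \eqref{eq:ic}, and integrate term by term using orthonormality \eqref{Ylms}.

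\emph{Step 1: harmonic coefficients.} Write $H(\gamma(\mathbf{x},\mathbf{x}')) = \sum_{\ell} h_\ell \sum_{m} Y_{\ell m}(\mathbf{x}) Y^*_{\ell m}(\mathbf{x}')$. Then
\[
\int_{\mathbb{S}^2} T(\mathbf{x}') Y^*_{\ell m}(\mathbf{x}')\, d\mathbf{s}(\mathbf{x}') = a_{\ell m}
\]
by \eqref{almss}, and so, formally,
\[
T_{\odot H}(\mathbf{x}) = \sum_{\ell} \sum_{m} h_\ell\, a_{\ell m} Y_{\ell m}(\mathbf{x}).
\]
This identifies $a_{\ell m,\odot} = h_\ell a_{\ell m}$. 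Equivalently, one can project $T_{\odot H}$ onto $Y^*_{\ell m}$ and apply Fubini; the integrability condition of Definition~\ref{secondapp} justifies the exchange of $\mathbf{E}$, the integrals and the sums.

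\emph{Step 2: mean and spectrum.} Linearity gives $\mathbf{E}a_{\ell m,\odot} = h_\ell\,\mathbf{E}a_{\ell m} = 0$. Since $T$ is isotropic, \eqref{iso} gives
\[
\mathbf{E}\, a_{\ell m,\odot} a^*_{\ell' m',\odot} = h_\ell h_{\ell'} \delta_\ell^{\ell'}\delta_m^{m'} C_\ell = \delta_\ell^{\ell'}\delta_m^{m'} C_\ell h_\ell^2 .
\]
The $h_\ell$ are real: $H$ is real-valued and zonal, so $h_\ell = 2\pi\int_0^\pi H(t)P_\ell(\cos t)\sin t\,dt$. Isotropy of $T_{\odot H}$ then follows from the criterion \eqref{iso}; it was also shown directly before the theorem.

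\emph{Step 3: covariance.} Insert the spectrum into \eqref{cov}, which reduces to a single diagonal sum $\sum_\ell C_\ell h_\ell^2 \sum_m Y_{\ell m}(\mathbf{x}_1)Y^*_{\ell m}(\mathbf{x}_2)$. The addition theorem
\[
\sum_m Y_{\ell m}(\mathbf{x}_1)Y^*_{\ell m}(\mathbf{x}_2) = \frac{2\ell+1}{4\pi}P_\ell(\cos\gamma(\mathbf{x}_1,\mathbf{x}_2)),
\]
already used in \eqref{legen}, then gives the stated formula.

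The main obstacle is justifying convergence and the interchanges in Step 1. I would handle this in $L_2(\Omega)$. The expansion of $T_{\odot H}$ converges there because
\[
\sum_\ell (2\ell+1)C_\ell h_\ell^2 \le \sup_\ell h_\ell^2 \sum_\ell (2\ell+1)C_\ell < \infty .
\]
Here $\sum_\ell (2\ell+1)C_\ell = 4\pi B(0) < \infty$ by continuity of $B$. For $H \in L^1$ (implied by the integrability assumption), $h_\ell$ is bounded by $2\pi\int|H|\sin t\,dt$.

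For the exchange itself, I would compute $\mathbf{E}\bigl|T_{\odot H}(\mathbf{x}) - \sum_{\ell\le L} h_\ell\sum_m a_{\ell m}Y_{\ell m}(\mathbf{x})\bigr|^2$ via \eqref{covs} and the Funk--Hecke formula, and show that it tends to zero.
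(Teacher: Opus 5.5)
Your proposal is correct and follows the same route as the paper. You substitute \eqref{spec} and \eqref{legen} into \eqref{eq:ic} and use orthonormality to get $a_{\ell m,\odot}=a_{\ell m}h_\ell$, then read off the spectrum from \eqref{iso} and the covariance from the addition theorem. Your added justification of the term-by-term integration (boundedness of $h_\ell$ for $H\in L^1$, and the $L_2(\Omega)$ remainder estimate via Funk--Hecke), along with the check that $h_\ell$ is real, supplies details that the paper's proof leaves implicit.
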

Therefore, in the isotropic case, convolution acts diagonally on harmonic coefficients and preserves isotropy through a simple modulation of the angular power spectrum.

Consider now the commutative anisotropic convolution model given by \eqref{eq:cac}. Let the convolution kernel $H(\cdot)\in L_2(\mathbb{S}^2)$ in (\ref{eq:cac}) have the following Laplace series representation
\begin{equation} \label{Hxx}
H(\mathbf{x}) = \sum_{\ell \in \mathbb{N}_0} \sum_{m=-\ell}^{\ell} h_{\ell m}\, Y_{\ell m}(\mathbf{x}),
\quad \mathbf{x} \in \mathbb{S}^2,
\end{equation}
with the $L_2$ norm
\begin{equation*}\label{L2H}
\|H\|_{L^2(\mathbb{S}^2)}^2
= \sum_{\ell \in \mathbb{N}_0} \sum_{m=-\ell}^{\ell} | h_{\ell m}|^2.
\end{equation*}
To obtain a real-valued convolution, real-valued kernels are used. Hence, their harmonic coefficients satisfy the relation $h_{ \ell (-m)}=(-1)^{m}h^{\ast}_{\ell m}.$
To state the following results, we use the notations of Wigner azimuthal $d$ matrices, see Appendix~\ref{appC}.
Unlike isotropic convolution, commutative anisotropic convolution allows the kernel to have mode-dependent azimuthal weights $h_{\ell m}$, leading to coupling across $m$ indices.

The following results give the spectral and covariance structures of the convolutional random fields.

	\begin{theorem} \label{th8}
Let $T_{\oplus H} (\mathbf{x}), \mathbf{x} \in \mathbb{S}^2,$ be defined by (\ref{eq:cac}) with a real-valued convolution kernel $H(\cdot)$ given by (\ref{Hxx}). Then, it is a zero-mean field that has the spectral representation
	\begin{equation*}
		T_{\oplus H}(\mathbf{x})
		=\sum_{\ell\in \mathbb{N}_0} \sum_{m=-\ell}^{\ell} \sum_{M=-\ell}^{\ell}a_{\ell  m} h^{\ast}_{\ell M} (-1)^me^{i(M-m)\varphi}d^{\ell}_{Mm}(\theta),
	\end{equation*}
   where $d^{\ell}_{Mm}(\theta)$ is defined by (\ref{dmml}) in Appendix~\ref{appC}.

Its covariance function is given by
		\begin{align*}	\text{Cov}\left(T_{\oplus H} (\mathbf{x}_1),T_{\oplus H}(\mathbf{x}_2)\right) & =
		  \sum_{\ell\in \mathbb{N}_0}
            \sum_{m=-\ell}^{\ell}
          \sum_{m^\prime=-\ell}^{\ell}  C_{\ell } h^\ast_{\ell m}h_{\ell  m^\prime} e^{i\left(m\varphi_{1}-m^\prime\varphi_{2}\right)}
          e^{-im\Phi} e^{-i m^{\prime} \Psi} d^{\ell}_{m m^\prime} (\Theta),
		\end{align*}
		where $\mathbf{x},$ $\mathbf{x}_1,$ and $\mathbf{x}_2$ have spherical coordinates $(\theta, \varphi), (\theta_1,\varphi_1)$ and $(\theta_2, \varphi_2)$ respectively. The Euler angles $(\Phi,\Theta,\Psi)$ are obtained by composing the rotations corresponding to $\mathbf{x}_1$ and $\mathbf{x}_2$, with $\psi_1=\psi_2=0,$ see \cite[Eq. 6, Sec. 4.7.2]{vmk}.
	\end{theorem}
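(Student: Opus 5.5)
The plan is to insert the Laplace series of $T$ and $H$ into the rotated form of \eqref{eq:cac}, namely $T_{\oplus H}(\mathbf{x})=\int_{\mathbb{S}^2} T(\tilde\rho\mathbf{x}')H(\mathbf{x}')\,d\mathbf{s}(\mathbf{x}')$, and to express the rotation through Wigner $D$-matrices. Under a rotation, spherical harmonics of fixed degree $\ell$ transform among themselves:
\[
Y_{\ell m}(\tilde\rho\mathbf{x}')=\sum_{M=-\ell}^{\ell} D^{\ell}_{Mm}(\tilde\rho^{-1})\,Y_{\ell M}(\mathbf{x}')
\]
(up to the convention fixed in Appendix~\ref{appC}). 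Substituting \eqref{spec} and \eqref{Hxx}, I exchange the sums and the integral. This exchange is justified by $L_2(\Omega\times\mathbb{S}^2)$ convergence of \eqref{spec} and $H\in L_2(\mathbb{S}^2)$, so that the integral acts as a bounded linear functional.

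Orthonormality \eqref{Ylms}, combined with the conjugation relation \eqref{Ylll} and the real-valuedness of $H$, gives
\[
\int Y_{\ell M}(\mathbf{x}')H(\mathbf{x}')\,d\mathbf{s}=(-1)^M h_{\ell(-M)}=h^*_{\ell M}.
\]
Only degree-$\ell$ coefficients of $H$ survive, so that
\[
T_{\oplus H}(\mathbf{x})=\sum_{\ell}\sum_{m,M} a_{\ell m}\,h^*_{\ell M}\,D^{\ell}_{Mm}(\cdot).
\]
I then evaluate $D^\ell$ at Euler angles $(\varphi,\theta,\pi-\theta)$, or their inverses $(\theta-\pi,-\theta,-\varphi)$, using $D^\ell_{Mm}(\alpha,\beta,\gamma)=e^{-iM\alpha}d^\ell_{Mm}(\beta)e^{-im\gamma}$. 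The phases combine into $e^{i(M-m)\varphi}$. The remaining factors ($e^{\pm i M(\pi-\theta)}$, the sign flip $\beta\to-\beta$ via $d^\ell_{Mm}(-\beta)=d^\ell_{mM}(\beta)=(-1)^{M-m}d^\ell_{Mm}(\beta)$) should be absorbed into the $(-1)^m$ and the convention \eqref{dmml} for $d^\ell_{Mm}(\theta)$.

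\textbf{Main obstacle.} This phase and sign bookkeeping is the step I expect to be the main difficulty: matching the paper's Euler-angle convention and its definition \eqref{dmml}, which presumably already incorporates the $\psi=\pi-\theta$ choice. I would first verify the formula on a kernel with a single nonzero $h_{\ell M}$ and fix signs there.

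The zero mean follows immediately from $\mathbf{E}a_{\ell m}=0$ together with mean-square convergence.

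For the covariance, it is simpler to work from $T_{\oplus H}(\mathbf{x}_i)=\int T(\tilde\rho_i\mathbf{x}')H(\mathbf{x}')\,d\mathbf{s}$. Since $T$ is isotropic, \eqref{iso} gives $\mathbf{E}a_{\ell m}a^*_{\ell'm'}=\delta\delta C_\ell$. The double sum then collapses to
\[
\sum_\ell C_\ell\sum_{M,M'} h^*_{\ell M}h_{\ell M'}\,\big[D^\ell(\tilde\rho_1^{-1})D^\ell(\tilde\rho_1^{-1})^{*}\ldots\big]_{MM'},
\]
which, by unitarity and the group property, reduces to $\sum_\ell C_\ell\sum_{M,M'}h^*_{\ell M}h_{\ell M'}D^\ell_{MM'}(\tilde\rho_1^{-1}\tilde\rho_2)$. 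Writing the composite rotation in Euler angles $(\Phi,\Theta,\Psi)$ as in \cite[Eq.~6, Sec.~4.7.2]{vmk} gives $e^{-iM\Phi}d^\ell_{MM'}(\Theta)e^{-iM'\Psi}$. The remaining factors $e^{i(M\varphi_1-M'\varphi_2)}$ arise from separating the azimuthal parts of $\tilde\rho_i$ (with $\psi_i=0$ as stated), after renaming $M,M'\to m,m'$.

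The covariance formula can equally be obtained directly from the spectral representation and \eqref{iso}, with the $D$-matrix product evaluated via the composition law. That route serves as a cross-check on the phase conventions.
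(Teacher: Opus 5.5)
\textbf{Gap at the deferred phase step.} Your route is the paper's. You expand, collapse by orthonormality and read off the $D$-entry via \eqref{BB}; for the covariance you use \eqref{iso} and the group law. Your intermediate identity $T_{\oplus H}(\mathbf{x})=\sum_{\ell,m,M}a_{\ell m}h^*_{\ell M}D^\ell_{Mm}(\tilde\rho^{-1})$ is correct. Your starting form is just the substitution $\mathbf{x}'\mapsto\tilde\rho\mathbf{x}'$, so it needs no commutativity.

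The gap is the step you defer as bookkeeping: with third Euler angle $\pi-\theta$ it is not delicate but false. Since $\tilde\rho^{-1}=(\theta-\pi,-\theta,-\varphi)$, \eqref{BB} and $d^\ell_{Mm}(-\beta)=d^\ell_{mM}(\beta)$ give
\[
D^\ell_{Mm}(\tilde\rho^{-1})=(-1)^M e^{-iM\theta}e^{im\varphi}\,d^\ell_{mM}(\theta).
\]
Only the field index $m$ carries a $\varphi$-phase, so no factor $e^{i(M-m)\varphi}$ can arise. The factor $e^{-iM\theta}$ is a non-real, $\theta$-dependent phase that no sign $(-1)^m$ and no real expression \eqref{dmml} can absorb. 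The covariance step fails the same way: the factors split off from $\tilde\rho_i$ are $R_z(\pi-\theta_i)$, which produce $\theta_i$-phases, not $e^{i(M\varphi_1-M'\varphi_2)}$. A zonal test kernel will not reveal this, because the third Euler angle only enters through $M\neq0$.

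\textbf{Missing idea and fix.} The target formula encodes $\psi=\pi-\varphi$, the choice in \cite{srk12}. The paper's definition and its proof both write $\pi-\theta$, but its conclusion is consistent only with $\pi-\varphi$. With that choice, $\tilde\rho=R_z(\varphi)R_y(\theta)R_z(\pi-\varphi)$ is the half-turn about the axis bisecting $\mathbf{0}$ and $\mathbf{x}$. Hence $\tilde\rho^{2}=I$, which is what actually makes the convolution commutative; with $\pi-\theta$, at $\theta=\pi/2$, $\varphi=0$, $\tilde\rho$ is the $3$-cycle of the coordinate axes. It follows that $D^\ell(\tilde\rho)$ is Hermitian and, in one line, $D^\ell_{Mm}(\tilde\rho^{-1})=D^\ell_{Mm}(\tilde\rho)=(-1)^m e^{-i(M-m)\varphi}d^\ell_{Mm}(\theta)$.

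\textbf{Sign of the exponent.} Since $\tilde\rho\mathbf{0}=\mathbf{x}$, the kernel $H=h_{\ell0}Y_{\ell0}$ must return $\sqrt{4\pi/(2\ell+1)}\,h_{\ell0}\sum_m a_{\ell m}Y_{\ell m}(\mathbf{x})$ by Funk--Hecke. That forces $e^{+im\varphi}$ at $M=0$. The displayed $e^{+i(M-m)\varphi}$ is instead the conjugate entry $\overline{D^\ell_{Mm}(\tilde\rho)}=D^\ell_{mM}(\tilde\rho)$. The paper's proof lands there because it expands the rotated kernel as $h^*_{LM}D^{L*}_{Mm'}Y^*_{Lm'}$ rather than $h^*_{Lm'}D^{L*}_{Mm'}Y^*_{LM}$. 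So, done correctly, your derivation reproduces the stated formula only up to $\varphi\mapsto-\varphi$.

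\textbf{Covariance.} Your reduction to $\sum_\ell C_\ell\sum_{M,M'}h^*_{\ell M}h_{\ell M'}D^\ell_{MM'}(\tilde\rho_1^{-1}\tilde\rho_2)$ is right once the bracket is read as $D^\ell(\tilde\rho_1^{-1})$ times the conjugate transpose of $D^\ell(\tilde\rho_2^{-1})$. It is also cleaner than the paper's symmetry-plus-addition-theorem step. However, writing $\tilde\rho_i=\rho_i^{0}R_z(\pi-\varphi_i)$ with $\rho_i^{0}=(\varphi_i,\theta_i,0)$ gives $D^\ell_{MM'}(\tilde\rho_1^{-1}\tilde\rho_2)=(-1)^{M+M'}e^{-i(M\varphi_1-M'\varphi_2)}D^\ell_{MM'}\bigl((\rho_1^{0})^{-1}\rho_2^{0}\bigr)$. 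You must carry this prefactor explicitly rather than absorb it.
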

\begin{remark} The commutative anisotropic convolution retains linearity but induces coupling across azimuthal indices, yielding an anisotropic covariance structure naturally described in terms of Wigner $d$-matrices.  If $h_{\ell m} = h_\ell \delta_{m0}$, the commutative anisotropic convolution reduces to the isotropic convolution described in Theorem~\ref{thtt}.
\end{remark}

	Finally, we consider the sifting convolution model introduced by \eqref{eq:sc}.
	The definition of the translation $\mathcal{L}_{\mathbf{x}}$ for spherical harmonics is, see \cite{rm21},
	\begin{equation*}
		\left(\mathcal{L}_{\mathbf{x}} Y_{\ell m}\right)\left(\mathbf{x^\prime}\right) = Y_{\ell m}(\mathbf{x})Y_{\ell m}(\mathbf{x^\prime}),\quad \mathbf{x^\prime}\in \mathbb{S}^2.
	\end{equation*}

    \begin{remark}
The spherical translation operator $\mathcal{L}_{\mathbf{x}}$ is defined analogously to translations in $\mathbb{R},$ where they are represented as a product of basis functions. Specifically, for a complex exponential $e^{inx}$, the translation by $x'$ gives $e^{in(x+x')} = e^{inx} e^{inx'}$.
\end{remark}

Therefore, for a function $f(\cdot)$ on $\mathbb{S}^2$ with harmonic coefficients $\{f_{\ell m}\}$,
\begin{equation*}
		\left(\mathcal{L}_{\mathbf{x}} f\right)\left(\mathbf{x^\prime}\right) = \sum_{\ell\in \mathbb{N}_0} \sum_{m=-\ell}^{\ell} f_{\ell m} Y_{\ell m}(\mathbf{x})Y_{\ell m}(\mathbf{x^\prime}),\quad \mathbf{x^\prime}\in \mathbb{S}^2.
	\end{equation*}
Thus, the spherical Dirac delta has the harmonic expansion
\[
\delta_\mathbf{x}(\mathbf{x'}) = \sum_{\ell,m} Y_{\ell m}(\mathbf{x}) Y_{\ell m}^*(\mathbf{x'}).
\]

	The following result clarifies the spectral structure of $T_\circledast 	(\cdot).$ The sifting convolution acts diagonally in harmonic space, similarly to the isotropic convolution, but allows the weights $h_{\ell m}$ to vary with both $\ell$ and $m$, potentially introducing diagonal anisotropy.
	\begin{theorem}\label{prop:sift}
    For an isotropic random field $T(\mathbf{x})$ $\mathbf{x} \in \mathbb{S}^2,$ given by (\ref{spec}) and a real-valued convolution kernel $H(\mathbf{x}),$ $\mathbf{x} \in \mathbb{S}^2,$ defined by (\ref{Hxx}), their sifting convolution~(\ref{eq:sc}) has the spectral representation
		\begin{equation}\label{Tconv}
			T_\circledast 	(\mathbf{x}) =\sum_{\ell \in \mathbb{N}_0} \sum_{m=-\ell}^{\ell} a_{\ell m, \circledast} Y_{\ell m }(\mathbf{x}).
		\end{equation}
		where $a_{\ell m, \circledast} = a_{\ell m} h_{\ell m}^\ast$.

		Thus, it holds
		\begin{equation*}
		\mathbf{E} a_{\ell m, \circledast}  = 0, \quad \mathbf{E} \left(a_{\ell m, \circledast} a^\ast_{\ell^\prime m^\prime, \circledast}\right) = C_{\ell m,\circledast} \delta_{\ell}^{\ell^\prime}\delta_{m}^{m^\prime},
		\end{equation*}
		where the angular power spectrum satisfies $C_{\ell m,\circledast} = C_\ell \left \vert h_{\ell m} \right \vert^2.$

The sifting convolutional field is zero-mean, and its covariance function is equal to
		\begin{equation*}
			\text{Cov}\left( T_\circledast(\mathbf{x}_1) , T_\circledast(\mathbf{x}_2)\right)= \sum_{\ell \in \mathbb{N}_0} C_{\ell} \sum_{m=-\ell}^{\ell} \left \vert h_{\ell m} \right \vert ^2 Y_{\ell m} (\mathbf{x}_1)Y^\ast_{\ell m} (\mathbf{x}_2), \quad \mathbf{x}_1, \mathbf{x}_2 \in \mathbb{S}^2.
		\end{equation*}
\end{theorem}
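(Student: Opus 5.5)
We will compute the harmonic coefficients of $T_\circledast$ directly from the definition~\eqref{eq:sc} combined with the translation operator~\eqref{oper}, and then read off the spectral covariance and the covariance function from~\eqref{iso} and~\eqref{cov}.

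First, insert the Laplace series $H(\mathbf{x}')=\sum_{\ell',m'}h_{\ell'm'}Y_{\ell'm'}(\mathbf{x}')$. Since $H$ is real, $H(\mathbf{x}')=\sum_{\ell',m'}h^\ast_{\ell'm'}Y^\ast_{\ell'm'}(\mathbf{x}')$ as well. Substituting $(\mathcal{L}_{\mathbf{x}}T)(\mathbf{x}')=\sum_{\ell,m}a_{\ell m}Y_{\ell m}(\mathbf{x})Y_{\ell m}(\mathbf{x}')$ into~\eqref{eq:sc} gives
\[
T_\circledast(\mathbf{x})=\sum_{\ell,m}a_{\ell m}Y_{\ell m}(\mathbf{x})\int_{\mathbb{S}^2}Y_{\ell m}(\mathbf{x}')H(\mathbf{x}')\,d\mathbf{s}(\mathbf{x}').
\]
We evaluate the inner integral using the conjugated form of $H$ and orthonormality~\eqref{Ylms}, which yields $h^\ast_{\ell m}$. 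This gives~\eqref{Tconv} with $a_{\ell m,\circledast}=a_{\ell m}h^\ast_{\ell m}$. Equivalently, $a_{\ell m,\circledast}=\int T_\circledast Y^\ast_{\ell m}\,d\mathbf{s}$ follows by the uniqueness of Laplace coefficients.

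The main obstacle is justifying the interchange of the infinite sum with the integral, that is, the convergence of~\eqref{Tconv}. We would do this in $L_2(\Omega\times\mathbb{S}^2)$ by working with partial sums truncated at degree $L$. For a fixed $\mathbf{x}$, the pairing of $\mathcal{L}_{\mathbf{x}}T$ with $H$ is, by Parseval, the series $\sum a_{\ell m}h^\ast_{\ell m}Y_{\ell m}(\mathbf{x})$. Mean-square convergence then follows from
\[
\mathbf{E}\,\Bigl|\sum_{\ell>L}\sum_m a_{\ell m}h^\ast_{\ell m}Y_{\ell m}(\mathbf{x})\Bigr|^2=\sum_{\ell>L}C_\ell\sum_m|h_{\ell m}|^2|Y_{\ell m}(\mathbf{x})|^2 .
\]
This uses the uncorrelatedness in~\eqref{iso}. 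The right-hand side is bounded by $\sup_\ell\sum_m|h_{\ell m}|^2\cdot\sum_{\ell>L}C_\ell(2\ell+1)/(4\pi)$. Here $\sum_m|Y_{\ell m}|^2=(2\ell+1)/(4\pi)$, and $\sum_\ell C_\ell(2\ell+1)/(4\pi)=B(0)<\infty$. The tail therefore tends to zero, because $H\in L^2$ makes $|h_{\ell m}|$ bounded.

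Next, the moments follow from linearity: $\mathbf{E}a_{\ell m,\circledast}=h^\ast_{\ell m}\mathbf{E}a_{\ell m}=0$. Using~\eqref{iso},
\[
\mathbf{E}\bigl(a_{\ell m,\circledast}a^\ast_{\ell'm',\circledast}\bigr)=h^\ast_{\ell m}h_{\ell'm'}\,\delta_\ell^{\ell'}\delta_m^{m'}C_\ell=C_\ell|h_{\ell m}|^2\delta_\ell^{\ell'}\delta_m^{m'} .
\]
Finally, substituting $C_{\ell\ell'mm'}=\delta_\ell^{\ell'}\delta_m^{m'}C_\ell|h_{\ell m}|^2$ into~\eqref{cov} collapses the quadruple sum to the stated covariance function. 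The zero mean of $T_\circledast(\mathbf{x})$ follows from the $L_2(\Omega)$ convergence established above.
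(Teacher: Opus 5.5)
Your proposal is correct and follows essentially the same route as the paper. You substitute the series for $\mathcal{L}_{\mathbf{x}}T$ and the conjugated form of the real kernel $H$ into~\eqref{eq:sc}, use orthonormality to get $a_{\ell m,\circledast}=a_{\ell m}h^\ast_{\ell m}$, and read off the moments from~\eqref{iso} and the covariance from~\eqref{cov}; your mean-square bound via $\sum_\ell (2\ell+1)C_\ell/(4\pi)=B(0)<\infty$ also justifies the interchange of sum and integral, which the paper's proof takes for granted.
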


Finally, we clarify the relations between diagonal anisotropic and sifting-convolution random fields.

\begin{proposition}\label{prop8}
A sifting convolution random field is diagonal anisotropic if and only if there exists some $\ell \in \mathbb{N}_0$ and distinct $m, m' \in \{-\ell, \ldots, \ell\}$ such that $C_\ell>0$ and $|h_{\ell m}| \neq |h_{\ell m'}|$.
If $|h_{\ell m}| = |h_{\ell m'}|$ for all $m, m'$, the field is isotropic.

A diagonal anisotropic random field can be represented in law as a sifting convolution if and only if its angular power spectrum satisfies the condition
\begin{equation}\label{sqrtsum}
\sum_{\ell \in \mathbb{N}_0} \sum_{m=-\ell}^{\ell} \sqrt{C_{\ell m}} < +\infty.
\end{equation}
\end{proposition}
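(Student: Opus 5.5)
The first claim follows from Theorem~\ref{prop:sift}. The sifting convolution field $T_\circledast$ has spectrum $C_{\ell\ell' m m'}=\delta_\ell^{\ell'}\delta_m^{m'}C_\ell|h_{\ell m}|^2$. Its spectral matrix is therefore always diagonal. By the characterization \eqref{iso} of isotropy and Definition~\ref{def22}, such a field is either isotropic or diagonal anisotropic. It is isotropic exactly when, for every $\ell$, $C_\ell|h_{\ell m}|^2$ does not depend on $m$.

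If $C_\ell=0$, this holds trivially for that $\ell$. If $C_\ell>0$, it is equivalent to $|h_{\ell m}|=|h_{\ell m'}|$ for all $m,m'$. This gives the stated equivalence, and also the second sentence: if all moduli agree within each degree, the field is isotropic. Since the spectral classes in \eqref{spect} are disjoint, ``diagonal anisotropic'' here just means ``not isotropic'', so nothing more is needed.

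For the representation-in-law statement, take a diagonal anisotropic field with spectrum $\delta_\ell^{\ell'}\delta_m^{m'}C_{\ell m}$. We must find an isotropic $T$ with spectrum $\{C_\ell\}$ and a real kernel $H\in L^2(\mathbb{S}^2)$ with $C_\ell|h_{\ell m}|^2=C_{\ell m}$. The natural choices are as follows.
\begin{itemize}
\item For $\ell$ with some $C_{\ell m}>0$, set $C_\ell:=1$ and $|h_{\ell m}|:=\sqrt{C_{\ell m}}$.
\item Otherwise set $C_\ell:=0$ and $h_{\ell m}:=0$.
\item Choose phases so that $h_{\ell(-m)}=(-1)^m h^*_{\ell m}$, keeping $H$ real.
\end{itemize}
The phase choice is consistent because real-valuedness of the field gives $C_{\ell m}=C_{\ell(-m)}$, via the relation $C_{\ell\ell' m m'}=(-1)^{m+m'}C^*_{\ell\ell'(-m)(-m')}$. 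One can also use freedom in $C_\ell$, e.g.\ $C_\ell=\max_m C_{\ell m}$.

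Next we need the constructed $T$ to be a legitimate field under Condition~\ref{cond1}, with continuous covariance $\sum_\ell \frac{2\ell+1}{4\pi}C_\ell P_\ell$. A sufficient condition is $\sum_\ell(2\ell+1)C_\ell<\infty$. We also need $H\in L^2$, i.e.\ $\sum_{\ell,m}C_{\ell m}/C_\ell<\infty$. This is where \eqref{sqrtsum} enters. Choosing $C_\ell=\sum_m\sqrt{C_{\ell m}}$ and $|h_{\ell m}|^2=C_{\ell m}/C_\ell$ gives two bounds:
\[
\sum_\ell(2\ell+1)C_\ell\lesssim\sum_{\ell,m}\ell\sqrt{C_{\ell m}},
\qquad
\sum_{m}|h_{\ell m}|^2\le\max_m\sqrt{C_{\ell m}}.
\]
The second is summable under \eqref{sqrtsum}. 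For the first, the better balanced choice is $C_\ell\propto\frac{1}{2\ell+1}\sum_m\sqrt{C_{\ell m}}$ with $|h_{\ell m}|^2=C_{\ell m}/C_\ell$. Then both $\sum_\ell (2\ell+1) C_\ell$ and $\|H\|^2$ are controlled by $\sum\sqrt{C_{\ell m}}$ times bounded factors, and I would verify this by Cauchy--Schwarz. Equality of second-order structure gives equality in law for Gaussian fields, and second-order equivalence in general.

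For necessity, suppose a representation exists. Then $\sqrt{C_{\ell m}}=\sqrt{C_\ell}\,|h_{\ell m}|$, and by Cauchy--Schwarz
\[
\sum_{\ell,m}\sqrt{C_\ell}\,|h_{\ell m}|\le\Big(\sum_\ell(2\ell+1)C_\ell\Big)^{1/2}\|H\|_{L^2},
\]
which is finite because $T$ has finite variance and $H\in L^2$. So \eqref{sqrtsum} is necessary.

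The main obstacle is the sufficiency step: finding the split of $C_{\ell m}$ into an isotropic spectrum and a square-summable kernel so that both factors are admissible. I would try the choice $C_\ell=\frac{1}{2\ell+1}\sum_m\sqrt{C_{\ell m}}$ first, then check that the continuity of the covariance required in Condition~\ref{cond1} also holds for the resulting isotropic field.
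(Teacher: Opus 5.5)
Your treatment of the first part (diagonal spectrum $C_\ell|h_{\ell m}|^2$, hence isotropic or diagonal anisotropic, including the $C_\ell=0$ case) is correct and matches the paper. So is your Cauchy--Schwarz proof that \eqref{sqrtsum} is necessary. The gap is the sufficiency step, and it cannot be closed.

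Write $q_\ell:=\sum_m C_{\ell m}$ and $s_\ell:=\sum_m\sqrt{C_{\ell m}}$. Your balanced choice $C_\ell=s_\ell/(2\ell+1)$ gives $\sum_\ell(2\ell+1)C_\ell=\sum_\ell s_\ell<\infty$, but $\|H\|^2=\sum_\ell(2\ell+1)q_\ell/s_\ell$. Here $q_\ell/s_\ell$ can be as large as $\max_m\sqrt{C_{\ell m}}$ (when the power at degree $\ell$ sits on a single order), so the factor $2\ell+1$ is never absorbed. No better choice exists. Because $C_\ell$ may not depend on $m$, for every admissible split and every degree with $q_\ell>0$, AM--GM gives
\[
(2\ell+1)C_\ell+\sum_{m}|h_{\ell m}|^2=(2\ell+1)C_\ell+\frac{q_\ell}{C_\ell}\ \ge\ 2\sqrt{(2\ell+1)q_\ell},
\]
with equality for $C_\ell=\sqrt{q_\ell/(2\ell+1)}$. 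Hence a representation exists if and only if $\sum_\ell\sqrt{(2\ell+1)q_\ell}<\infty$. This implies \eqref{sqrtsum}, since $s_\ell\le\sqrt{(2\ell+1)q_\ell}$, but it is strictly stronger.

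Concretely, take $C_{\ell m}=\ell^{-3}\delta_m^0$ for $\ell\ge1$ and $C_{00}=0$. This is realised by the real field $\sum_{\ell\ge1}a_{\ell0}Y_{\ell0}$ with independent $a_{\ell0}\sim N(0,\ell^{-3})$. Its covariance is continuous (uniform convergence, since $|Y_{\ell0}|^2\le(2\ell+1)/(4\pi)$), it is diagonal anisotropic ($C_{10}\neq C_{11}$), and it satisfies \eqref{sqrtsum} because $\sum_\ell\ell^{-3/2}<\infty$. Yet $\sum_\ell\sqrt{(2\ell+1)\ell^{-3}}=\infty$, so it is not the sifting convolution of any finite-variance isotropic field with an $L^2$ kernel.

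So the ``if'' half of the second claim is false as stated, and your plan could not have been completed. The paper's own proof reaches that half only by treating $\{C_\ell\}$ as a free sequence indexed by $(\ell,m)$ and taking both factors equal to $\sqrt{C_{\ell m}}$, which would make the isotropic spectrum depend on $m$. You correctly identified this $m$-independence constraint as the crux. The right conclusion is that \eqref{sqrtsum} must be replaced by $\sum_\ell\bigl((2\ell+1)\sum_m C_{\ell m}\bigr)^{1/2}<\infty$. For that condition, your necessity argument applied degree by degree, together with the choice $C_\ell=\sqrt{q_\ell/(2\ell+1)}$ and $|h_{\ell m}|^2=C_{\ell m}/C_\ell$ (phases fixed as you describe), gives a complete proof.
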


The condition (\ref{sqrtsum}) ensures that the sifting convolution series converges in $L^2(\mathbb{S}^2)$.

\begin{example} \label{ex6}
Let us use a convolution kernel $H(\mathbf{x}),$ $\mathbf{x} \in \mathbb{S}^2,$ defined by (\ref{Hxx}) with the coefficients \begin{equation}\label{hlm}
h_{\ell m} =
\begin{cases}
(\ell+|m|)^{-1}, & 1\le \ell \le  \ell_{max}, \; m = -\ell, \dots, \ell,\\
0, & else.
\end{cases}
\end{equation}
\begin{figure}[!b]
  \centering
  \begin{subfigure}{0.48\textwidth}
    \centering
    \includegraphics[width=\textwidth, trim={1 0 0 0}, clip]{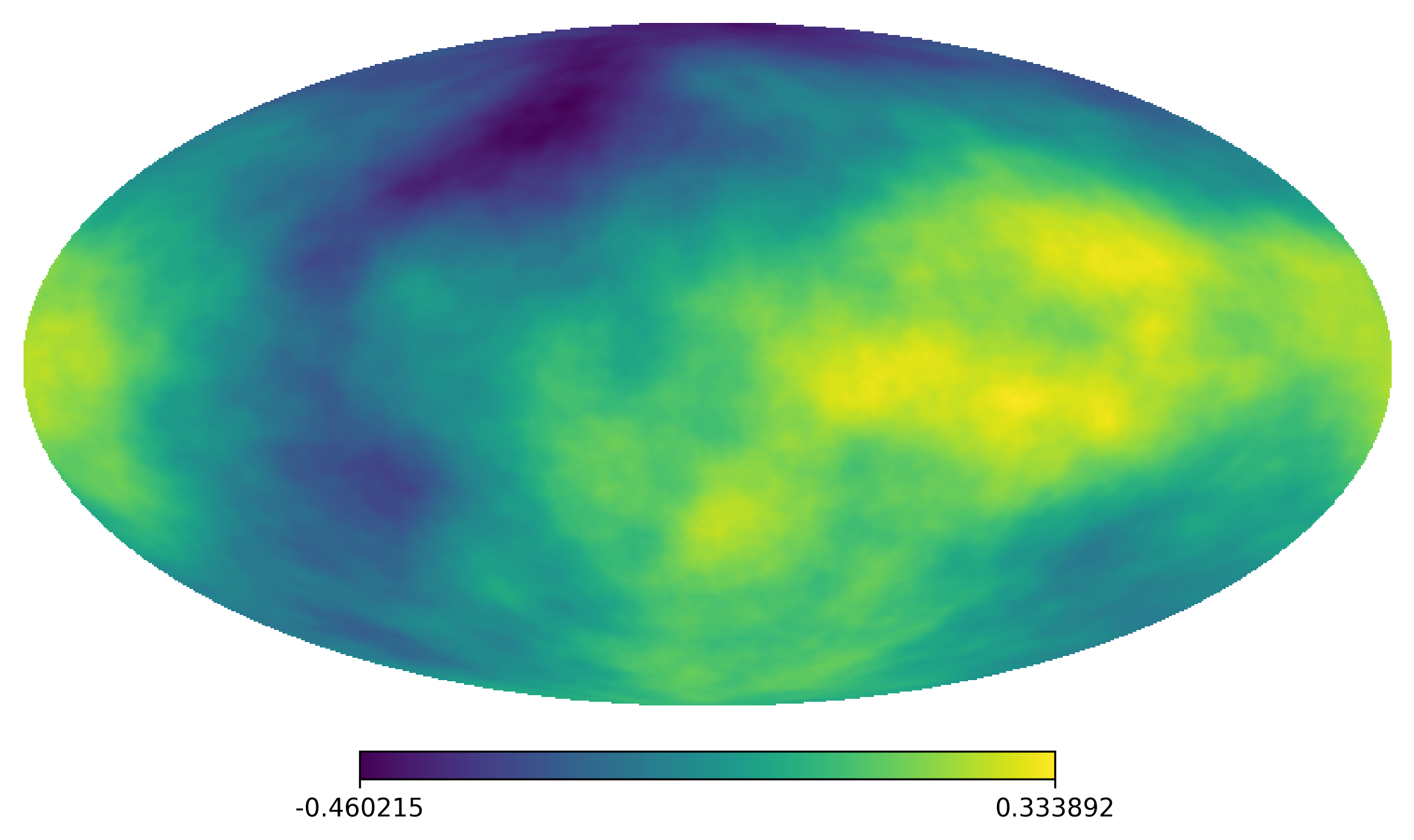}\\[0.2cm]
    \caption{\it Realization of the field}\label{fig5a}
  \end{subfigure}%
  \begin{subfigure}{0.68\textwidth}
    \centering
    \includegraphics[width=\textwidth, trim={4cm 0 0 5.5cm}, clip]{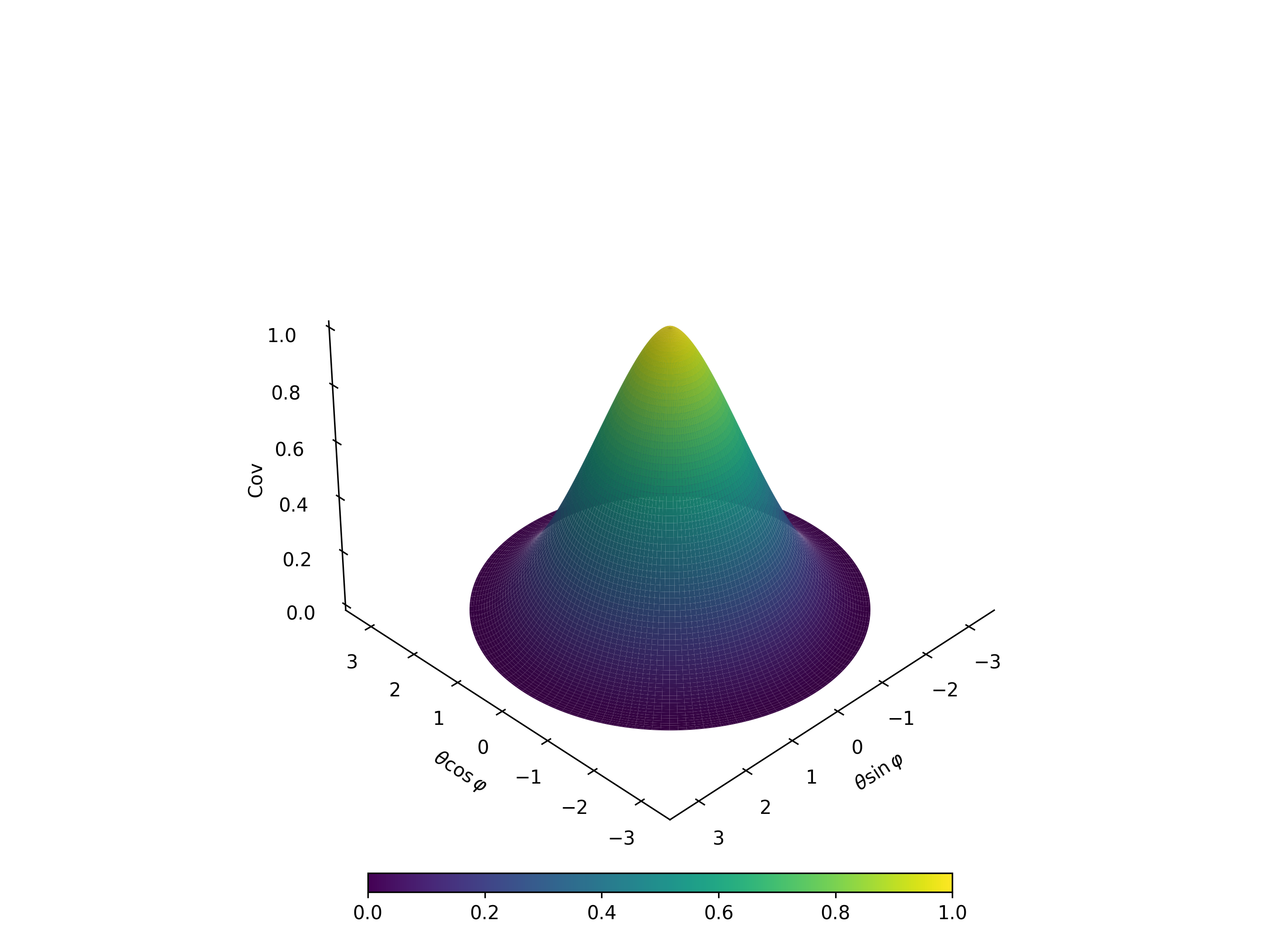}
        \captionsetup{justification=raggedright, singlelinecheck=false, margin=0cm}
    \caption{\it Covariance function  $\text{Cov}\left( T_\circledast(\mathbf{0}) , T_\circledast(\mathbf{x})\right)$}
    \label{fig5b}
  \end{subfigure}
\caption{\it Convolutional field $T_{\circledast}(\mathbf{x})$ with kernel $h_{\ell m}$ given by {\rm (\ref{hlm})}.}
  \label{fig5}
\end{figure}
The realization of the convolutional field $T_{\circledast}(\mathbf{x})$ in Figure~\ref{fig5a} is obtained by applying the spectral representation (\ref{Tconv}) with the harmonic  coefficients $a_{\ell m,\circledast}$ derived from $h_{\ell m}$ and the coefficients $a_{\ell m}$ in Example~\ref{ex1}. The realization exhibits the same pattern of "cold" and "hot" areas, but with a significantly smoother local structure compared to the field shown in Figure~\ref{fig1a}.

By Theorem~\ref{prop:sift} and the properties~(\ref{ylm}), (\ref{Ylll}) and~(\ref{Yl0}) of the spherical harmonics, the corresponding covariance function $\text{Cov}\left( T_\circledast(\mathbf{0}) , T_\circledast(\mathbf{x})\right)$ can be computed as
\begin{align*}
			\text{Cov}\left( T_\circledast(\mathbf{0}) , T_\circledast(\mathbf{x})\right)&= \sum_{\ell \in \mathbb{N}_0} C_{\ell} \sum_{m=-\ell}^{\ell} \left \vert h_{\ell m} \right \vert ^2 Y_{\ell m} (\mathbf{0})Y^\ast_{\ell m} (\mathbf{x})=
            \sum_{\ell \in \mathbb{N}_0} \sqrt{\frac{2\ell+1}{4\pi}}  C_{\ell} \left \vert h_{\ell 0} \right \vert ^2 Y^\ast_{\ell 0} (\mathbf{x})\\
            & =\sum_{\ell \in \mathbb{N}_0} {\frac{2\ell+1}{4\pi}} C_{\ell} \left \vert h_{\ell 0} \right \vert ^2
    P_{\ell}(\cos\theta).
		\end{align*}
Therefore, it is isotropic at the pole $\mathbf{0},$ which is in agreement with Propositions~\ref{prop5} and~\ref{prop8}.
Figure~\ref{fig5b} plots the normalized covariance function, which confirms that the field is isotropic at the pole $\mathbf{0}$.  The strong correlation in Figure~\ref{fig5b} compared to Figure~\ref{fig1b} in Example~\ref{ex1} is due to the convolution smoothing.
\end{example}

\subsection{Spectral representation of deformed random fields}
Finally, we provide some spectral results about $U$-deformed fields $T^U(\cdot).$ A bounded continuous covariance function on the unit sphere guarantees square integrability: $\mathbb E\int|T(U\mathbf{x})|^2d\mathbf{s}(\mathbf{x})\le 4\pi\sup_{\mathbf x}C(\mathbf x,\mathbf x)<\infty$. Therefore, the deformed field $T^U(\cdot)$ admits an $L^2(\mathbb{S}^2)$ expansion in spherical harmonics.

\begin{theorem}\label{defor_spec}
   For an isotropic random field $T(\mathbf{x})$ $\mathbf{x} \in \mathbb{S}^2,$ given by (\ref{spec}), the random field $T^U(\cdot)$ admits the following spectral representation
\begin{equation*}
T^U(\mathbf{x})
=\sum_{\ell' \in \mathbb{N}_0}
\sum_{m'=-\ell'}^{\ell'}
b_{\ell' m'}\, Y_{\ell' m'}(\mathbf{x}),
\end{equation*}
where
\begin{equation*}
b_{\ell' m'}
=
\sum_{\ell \in \mathbb{N}_0}
\sum_{m=-\ell}^{\ell}
Q^{U}_{\ell m,\ell' m'}\, a_{\ell m}
\end{equation*}
and
\begin{equation*}
Q^{U}_{\ell m,\ell' m'}
=
\int_{\mathbb{S}^2}
Y_{\ell m}(U(\mathbf{x}))\,
{Y_{\ell' m'}^*(\mathbf{x})} \,
d\mathbf{s}(\mathbf{x}).
\end{equation*}

The spherical harmonic coefficients $\{b_{\ell m}\}$ are centered random variables with the corresponding angular power spectrum given by
\begin{equation*}
\mathbb{E}\!\left[
b_{\ell m}\, {b^*_{\ell' m'}}
\right]
=
\sum_{\tilde{\ell}  \in \mathbb{N}_0}
C_{\tilde{\ell} }
\sum_{\tilde{m} =-\tilde{\ell} }^{\tilde{\ell} }
Q^{U}_{\tilde{\ell}  \tilde{m},\ell m}\,
{\left(Q^{U}_{\tilde{\ell}  \tilde{m},\ell' m'}\right)^*}.
\end{equation*}
\end{theorem}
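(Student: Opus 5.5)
The proof has two parts: first expand the deformed field in the spherical harmonic basis, then compute the second moments of the resulting coefficients.

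\textbf{Step 1: the coefficients.} By the remark preceding the theorem, $T^U$ has a continuous bounded covariance $C(U\mathbf{x}_1,U\mathbf{x}_2)=B(\gamma(U\mathbf{x}_1,U\mathbf{x}_2))$. It is therefore square integrable on $\Omega\times\mathbb{S}^2$, so it has a mean-square Laplace expansion with coefficients
\[
b_{\ell'm'}=\int_{\mathbb{S}^2}T(U(\mathbf{x}))\,Y^*_{\ell'm'}(\mathbf{x})\,d\mathbf{s}(\mathbf{x}).
\]
I then substitute the expansion (\ref{spec}) of $T$, evaluated at $U(\mathbf{x})$, and exchange the sum with the integral. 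This yields $b_{\ell'm'}=\sum_{\ell,m}Q^U_{\ell m,\ell'm'}a_{\ell m}$. The coefficients $Q^U$ are well defined, since $Y_{\ell m}\circ U$ is bounded and continuous. Each $b_{\ell'm'}$ is centered because the $a_{\ell m}$ are centered.

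\textbf{Step 2: the angular power spectrum.} Here I use the isotropy relation (\ref{iso}), namely $\mathbf{E}a_{\ell m}a^*_{\tilde\ell\tilde m}=\delta_\ell^{\tilde\ell}\delta_m^{\tilde m}C_\ell$. Expanding $\mathbf{E}[b_{\ell m}b^*_{\ell'm'}]$ as a double sum and applying this relation collapses it to the single sum $\sum_{\tilde\ell}C_{\tilde\ell}\sum_{\tilde m}Q^U_{\tilde\ell\tilde m,\ell m}(Q^U_{\tilde\ell\tilde m,\ell'm'})^*$. This is the stated formula.

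\textbf{Main obstacle: justifying the interchanges.} Swapping the infinite series with the integral and with the expectation needs care, and this is the step I expect to be hardest. I would handle it through $L^2(\Omega)$ continuity rather than pathwise arguments. Let $T_L$ be the truncated Laplace series. Since $U$ is a homeomorphism but not necessarily measure preserving, the uniform-in-$\mathbf{x}$ convergence of $\mathbf{E}|T(\mathbf{y})-T_L(\mathbf{y})|^2$ is what I need. This quantity equals $\sum_{\ell>L}\frac{2\ell+1}{4\pi}C_\ell$ and does not depend on $\mathbf{y}$. It tends to $0$ because $B$ is continuous, so $\sum_\ell(2\ell+1)C_\ell=4\pi B(0)<\infty$. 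Applying Cauchy--Schwarz, $\mathbf{E}\big|\int(T-T_L)(U\mathbf{x})Y^*_{\ell'm'}(\mathbf{x})\,d\mathbf{s}\big|^2\le 4\pi\sup_{\mathbf{y}}\mathbf{E}|T(\mathbf{y})-T_L(\mathbf{y})|^2\to0$. Hence the series for $b_{\ell'm'}$ converges in $L^2(\Omega)$. The expectation of the product then passes to the limit by continuity of the inner product in $L^2(\Omega)$. The resulting series converges absolutely because $\sum_{\tilde m}|Q^U_{\tilde\ell\tilde m,\ell m}|^2\le\int\sum_{\tilde m}|Y_{\tilde\ell\tilde m}(U\mathbf{x})|^2d\mathbf{s}=2\tilde\ell+1$, by the addition theorem. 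Combining this with Cauchy--Schwarz in $\tilde m$ and $\sum(2\tilde\ell+1)C_{\tilde\ell}<\infty$ gives the bound.
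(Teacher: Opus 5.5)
Your proposal is correct and follows essentially the paper's route: substitute the Laplace series of $T$ evaluated at $U(\mathbf{x})$, identify the coefficients through $Q^U$, and collapse the second moments with (\ref{iso}). Your $L^2(\Omega)$ argument fills in the limit interchanges that the paper's ``follows straightforwardly'' skips. It rests on the $\mathbf{y}$-independent tail $\sum_{\ell>L}\frac{2\ell+1}{4\pi}C_\ell$ and on the bound $\sum_{\tilde m}|Q^U_{\tilde\ell\tilde m,\ell m}|^2\le 2\tilde\ell+1$.
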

In particular, when $U(\cdot)$ is not an isometry, the operator
$Q^U_{\ell m,\ell' m'}$ generally induces coupling across different
harmonic modes, yielding a non-diagonal spectral covariance structure,
which is consistent with the geometric anisotropy established in
Corollary~\ref{cor2}.
The results of Theorem~\ref{defor_spec} can be further specified for particular deformations
$U(\cdot).$ For example, the following result directly follows  from~\eqref{ylm} and~\eqref{Ylms}.

\begin{corollary}
Let the deformations $U(\cdot)$ be a circular scaling transformation of the sphere at the pole $\mathbf{0}$ defined by a circular scaling function $\lambda(\cdot).$ Then, the random field $T^U(\cdot)$ admits the spectral representations from Theorem~\ref{defor_spec} with the coefficients
\[Q^{U}_{\ell m,\ell' m'}
= \delta _{m}^{m^{\prime }}\tilde{Q}^{U}_{\ell \ell',m}\,,\]
where
\[\tilde{Q}^{U}_{\ell \ell',m} = \left[\frac{(2\ell+1)(2\ell'+1)(\ell-m)!(\ell'-m)!}{4(\ell+m)!(\ell'+m)!}\right] ^{1/2}
 \int_{0}^{\pi }P_{\ell}^{m}(\cos(\lambda(\theta)))
    P_{\ell'}^{m}(\cos\theta)\sin \theta d\theta.
\]
\end{corollary}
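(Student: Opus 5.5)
We compute $Q^U_{\ell m,\ell' m'}$ directly from its definition in Theorem~\ref{defor_spec}, written in spherical coordinates. A circular scaling transformation at the pole $\mathbf{0}$ maps each point to a point on the same meridian. Its colatitude is rescaled and its longitude is unchanged:
\[
U(\theta,\varphi)=(\lambda(\theta),\varphi).
\]
This holds because every great circle through $\mathbf{0}$ is a meridian together with its antipodal meridian. A point on the semicircle of longitude $\varphi$ at distance $\theta$ from $\mathbf{0}$ is sent to the point on the same semicircle at distance $\lambda(\theta)$. Since $\lambda(0)=0$ and $\lambda(\pi)=\pi$, the poles are fixed and the map is well defined there.

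Next we substitute the explicit harmonics \eqref{ylm}:
\[
Y_{\ell m}(U(\theta,\varphi))=e^{im\varphi}\,N_{\ell m}\,P_\ell^m(\cos\lambda(\theta)),
\]
where $N_{\ell m}=\bigl[(2\ell+1)(\ell-m)!/(4\pi(\ell+m)!)\bigr]^{1/2}$. The conjugate $Y^*_{\ell'm'}(\theta,\varphi)$ equals $e^{-im'\varphi}N_{\ell'm'}P_{\ell'}^{m'}(\cos\theta)$, since the associated Legendre functions are real. The integrand therefore factorizes into a $\varphi$-part and a $\theta$-part. The $\varphi$-integral is $\int_0^{2\pi}e^{i(m-m')\varphi}\,d\varphi=2\pi\,\delta_m^{m'}$, which is the same orthogonality that underlies \eqref{Ylms}. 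This produces the factor $\delta_m^{m'}$. Setting $m'=m$, the remaining constant is
\[
2\pi N_{\ell m}N_{\ell' m}=\left[\frac{(2\ell+1)(2\ell'+1)(\ell-m)!(\ell'-m)!}{4(\ell+m)!(\ell'+m)!}\right]^{1/2},
\]
because $(2\pi)^2/(16\pi^2)=1/4$. What remains is the $\theta$-integral $\int_0^\pi P_\ell^m(\cos\lambda(\theta))P_{\ell'}^m(\cos\theta)\sin\theta\,d\theta$, which gives exactly $\tilde Q^U_{\ell\ell',m}$. Formula \eqref{ylm} is stated for $m\ge0$; for negative $m$ the same computation goes through with the convention for $P_\ell^{-m}$, which keeps the form $e^{im\varphi}\times(\text{real function of }\theta)$.

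The only step needing care is justifying the separation of variables and Fubini's theorem. The integrand is bounded: $\lambda$ is continuous, so $P_\ell^m(\cos\lambda(\theta))$ is continuous and bounded on $[0,\pi]$. Hence everything is absolutely integrable, and no smoothness of $\lambda$ is needed, because we never change variables; $U$ enters only inside $Y_{\ell m}$. The factorization $U(\theta,\varphi)=(\lambda(\theta),\varphi)$ is the main conceptual point, and it follows directly from the definition of circular scaling. Once the $Q^U$ coefficients are identified, the spectral representation and the angular power spectrum formulas are inherited from Theorem~\ref{defor_spec}.
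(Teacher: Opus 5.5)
Your proof is correct and is the same computation the paper has in mind: the paper only says the corollary follows from \eqref{ylm} and the orthonormality \eqref{Ylms}. That amounts to writing $U(\theta,\varphi)=(\lambda(\theta),\varphi)$, substituting the explicit harmonics, and letting the $\varphi$-integral produce $2\pi\delta_m^{m'}$, with the constant $2\pi N_{\ell m}N_{\ell' m}$ exactly as you computed. Your remarks on negative orders and integrability fill in details the paper leaves implicit.
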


\section{Conclusion}
The results obtained in this paper suggest several directions for future research:
\begin{enumerate}
\item Extend the analysis to random fields on $n$-dimensional spheres and balls for $n>3$, or on other compact, regular 2-dimensional manifolds, such as tori \cite{Leonenko_ball, Malyarenko1}.
\item Investigate which results remain valid for fields with non-continuous covariance functions. For instance, some properties hold without continuity assumptions, while others, such as Theorem~\ref{t1}, rely on continuity through the hairy ball theorem.
\item Explore analogous questions for complex-valued random fields or fields with other invariance structures, such as covariate-dependent random fields, see \cite{Malyarenko1, Malyarenko}.
\item Develop statistical procedures to distinguish isotropic fields from the anisotropic scenarios studied here \cite{Caponera, Duque}.
\item Generalize the current second-order analysis to higher-order moments or $k$-point covariance functions.
\end{enumerate}










\section{Proofs}
\begin{proof}[Proof of Theorem \ref{t1}]
Since $T(\boldsymbol{\cdot})$ is anisotropic, there are two alternatives:
\begin{enumerate}
    \item  The field is isotropic at each point of $\mathbb{S}^2$, but there exist at least two circles, $O(\mathbf{x}_0, r)$ and $O(\mathbf{x}_0^{\prime}, r)$, of the same radius and with different centers $\mathbf{x}_0 \neq \mathbf{x}_0^{\prime}$, on which the corresponding values of the covariance functions $C(\mathbf{x}_0, \mathbf{x})$ and $C(\mathbf{x}_0^{\prime}, \mathbf{x})$ are distinct constants.
    Obviously, the covariance function is not rotationally invariant.
    \item There exist a point $\mathbf{x}_0$ and a circle $O(\mathbf{x}_0,r),$ where the covariance function $C(\mathbf{x}_0, \mathbf{x}),$ $\mathbf{x}\in O(\mathbf{x}_0,r),$ is not constant.
Consider a tangent plane $T_{\mathbf{x}_0}\mathbb{S}^2$ at $\mathbf{x}_0$ and the orthogonal projection of $O(\mathbf{x}_0,r)$ on it.  Fix an arbitrary direction in $T_{\mathbf{x}_0}\mathbb{S}^2$, and define a tangent vector at $\mathbf{x}_0$ in this direction whose length is $\max_{\mathbf{x} \in O(\mathbf{x}_0, r)} C(\mathbf{x}_0, \mathbf{x}) - \min_{\mathbf{x} \in O(\mathbf{x}_0, r)} C(\mathbf{x}_0, \mathbf{x}) > 0.$ The length is strictly positive, since the covariance $C(\mathbf{x}_0, \mathbf{x})$ is not constant on $O(\mathbf{x}_0,r).$
By the same argument, there exists an angle $\alpha_0>0$ such that, for every nonzero
$\alpha\in(-\alpha_0,\alpha_0)$, the rotation $\rho_{\mathbf{x}_0,\alpha}$ about
$\mathbf{x}_0$ does not preserve the covariance function on
$O(\mathbf{x}_0,r)$. Hence, for every nonzero
$\alpha\in(-\alpha_0,\alpha_0)$, there exists a point
$\mathbf{x}\in O(\mathbf{x}_0,r)$ such that
$C\bigl(\mathbf{x}_0,\rho_{\mathbf{x}_0,\alpha}\mathbf{x}\bigr)
\neq
C(\mathbf{x}_0,\mathbf{x}).
$

Now suppose that the covariance function is rotationally invariant. Then, for every point $\mathbf{x}\in\mathbb{S}^2$, the restriction of the covariance function to the circle $O(\mathbf{x},r)$ is obtained from its restriction to $O(\mathbf{x}_0,r)$ by a rotation of the sphere.  Applying the same rotation to the tangent vector at $\mathbf{x}_0$ defines a tangent vector at $\mathbf{x}$. Since rotations act smoothly on $\mathbb{S}^2$, the covariance function is continuous, and small rotations $\rho_{\mathbf{x},\alpha}$ do not preserve the covariance function, this construction yields a continuous tangent vector field on $\mathbb{S}^2$. Every vector in this field has the same positive length and, hence, the vector field is nowhere vanishing. This contradicts the hairy ball theorem \cite[p.253]{Renteln}, and concludes the proof demonstrating that the covariance function can't be rotationally invariant in this case.
\end{enumerate}\vspace{-6mm}
\end{proof}

\begin{proof} [Proof of Corollary \ref{CC1}]
Part (2) of the proof of Theorem~\ref{t1} immediately implies the results.
\end{proof}

\begin{proof} [Proof of Theorem \ref{th2}]
Let us assume that there are two circles $O(\mathbf{x}_0,r)$ and $O(\mathbf{x}_0^{\prime},r),$ $r \in (0, \pi),$ such that $T(\mathbf{x})$ is isotropic on them, but has different values of $C(\mathbf{x}_0,\mathbf{x}), \: \mathbf{x} \in O(\mathbf{x}_0,r),$ and $C(\mathbf{x}_0^{\prime},\mathbf{x}^{\prime}), \: \mathbf{x}^{\prime} \in O(\mathbf{x}_0^{\prime},r) .$ A geodesic arc connecting $\mathbf{x}_0$ and~$\mathbf{x}_0^{\prime}$ has the length $\gamma(\mathbf{x}_0, \mathbf{x}_0^{\prime}).$ Let us consider $\left\lceil {\gamma(\mathbf{x}_0, \mathbf{x}_0^{\prime})}/{r} \right\rceil+1$ equidistant points (where $\left\lceil \mathbf{\cdot} \right\rceil$ stands for the ceiling functions) along this geodesic arc, where the first and last points coincide with $\mathbf{x}_0$ and  $\mathbf{x}_0^{\prime}$ respectively.  For each of these equidistant points, consider circles $O(\mathbf{x},r)$ with a center located at the respective point.
As every two consecutive circles intersect and the points of intersection are equidistant from their centers, the values of the covariance functions are identical on all consecutive circles. We used isotropy at the points of intersection. Hence, the covariance function must be the same across all these circles, in particular, on $O(\mathbf{x}_0, r)$ and $O(\mathbf{x}_0^{\prime},r),$ which contradicts the assumption.
\end{proof}

\begin{proof} [Proof of Theorem \ref{th3}]
By Theorem~\ref{th2}, the field cannot be isotropic at each point. Therefore, the proof follows the argument presented in the proof of part~2 of Theorem~\ref{t1}. Given that the covariance function $C(\mathbf{x}_1,\mathbf{x}_2)$ is continuous, the circular scaling transformation of the field also has a continuous covariance function on~$\mathbb{S}^2.$ Since $\lambda(\cdot)$ is continuous, a non-zero vector field on $\mathbb{S}^2$ can be constructed analogously to Theorem~\ref{t1}, but using the continuously mapped sets $O(\mathbf{x}_0,\lambda(r))$ instead of $O(\mathbf{x}_0,r)$ for each location on $\mathbb{S}^2.$ It again leads to a contradiction with the hairy ball theorem.
\end{proof}

\begin{proof} [Proof of Proposition \ref{prop1}]
Consider an arbitrary point $\mathbf{x}_0\in \mathbb{S}^2.$ By Definition~\ref{TV}, the covariance function of the field $T_V(\mathbf{x})$ is given by
\begin{equation}\label{mulcov}
C_{T_V}(\mathbf{x}_0,\mathbf{x}):=\mathbf{E}(T_V(\mathbf{x}_0) {T_V(\mathbf{x})})= 
V(\mathbf{x}_0)V(\mathbf{x})B(\gamma (\mathbf{x}_0,\mathbf{x})).\end{equation}
By isotropy of $T(\mathbf{x}),$ it follows that $B(\gamma (\mathbf{x}_0,\mathbf{x}))\equiv const\neq 0,$ if $\mathbf{x} \in O(\mathbf{x}_0,r).$  Since $V(\mathbf{x})$ takes different values on that circle and $V(\mathbf{x}_0)\neq 0,$ therefore, $C_{T_V}(\mathbf{x}_0,\mathbf{x})\not \equiv const,$ if $\mathbf{x} \in O(\mathbf{x}_0,r).$ Hence, the field $T_V(\mathbf{x})$ is not isotropic at every point of $\mathbb{S}^2,$
which, by Definition~\ref{def_sani},  concludes the proof.
\end{proof}

\begin{proof} [Proof of Example \ref{ex2}]
Note that this function is correctly defined as it takes the same value 1 for all $\varphi\in[0, 2\pi)$ if $\theta =0$ or~$\pi$ and when $\varphi =0$ and $2\pi.$ Since $  {V}(\theta, \varphi) \ge 1,$ it is a non-vanishing function.
For each circle of latitude, $\theta$ is the same non-zero constant from the interval $(0, \pi).$ Therefore, as ${V}(\cdot)$ varies with~$\varphi,$ it is not constant on all circles of latitude.

Consider any other circle. It is symmetric with respect to exactly two different meridians. It intersects with some circle of latitude at two points. Let $\varphi$-coordinates of these two intersection points be denoted by $\varphi_1$ and $\varphi_2,$ where  $\varphi_1\not=\varphi_2.$ These points are symmetric with respect to the mentioned two meridians.

{\rm (i)} In the case when  $\varphi_1\not=2\pi-\varphi_2,$  we obtain that  $\varphi_1(2 \pi -\varphi_1)\neq\varphi_2(2 \pi -\varphi_2),$ and, thus, the function ${V}(\cdot)$ takes on at least two different values on that circle.

{\rm (ii)} In the case, when $\varphi_1=2\pi-\varphi_2,$ the considered circle is symmetric with respect to the meridians at $\varphi=0$ and $\pi.$

Then, if the center of the circle is not at the intersection of those meridians and the equator (i.e., the antipodal points $(\pi/2,0)$ or $(\pi/2,\pi)$), there is a meridian at some $\varphi_3$ different from~$0$ and $\pi,$ that intersects the circle at two distinct points. At these points, the values of $\varphi$ are the same, but the values of $\theta(\pi-\theta)$ differ.  Hence, ${V}(\cdot)$ takes two different values at those intersection points.

Finally, if the center of the circle is at $(\pi/2,0)$ and its angular radius is $r\in (0,\pi/2],$ then the points $(\pi/2, r)$ and $(\pi/2+r ,0)$ belong to this circle. If the angular radius is $r\in (\pi/2,\pi),$ then the points $(\pi/2,r )$ and $(3\pi/2-r ,\pi)$ belong to the circle. The corresponding values of the function $  {V}(\cdot)$ are different as
\[  {V}(\pi/2, r )= \pi^2 r (2 \pi -r )/4+1 > 1= \begin{cases}
{V}(\pi/2+r, 0), & \mbox{if} \ r\in (0,\pi/2], \\
  {V}(3\pi/2-r ,\pi), & \mbox{if} \ r\in (\pi/2, \pi) .
\end{cases} \]
The same proof is also applies to the case of the center at $(\pi/2,\pi).$
\end{proof}

\begin{proof} [Proof of Proposition \ref{prop3}]
The statement will be proved by explicitly constructing an example of such fields. Let us consider two independent isotropic random fields $T_{1}(\mathbf{x})$ and $T_{2}(\mathbf{x}),$ $\mathbf{x} \in \mathbb{S}^2,$ with the corresponding covariance functions $B_1(\cdot)$ and $B_2(\cdot),$ see Remark~\ref{rem2}. Let $B_1(\cdot)$ and $B_2(\cdot)$ be Askey covariance functions~\cite{Gneiting}, with the parameters $c_1$ and $c_2,$ $0<c_2<r_0<c_1,$ respectively. Thus, they vanish if the angular power distance exceeds $c_1$ and $c_2,$ respectively, and remain positive otherwise.
Consider $  {T}_{2,V} (\theta, \varphi)=  {V}(\theta, \varphi)\cdot  {T_2}(\theta, \varphi),$ where $  {V}(\theta, \varphi)$ is given by (\ref{V}). By the computations in Example \ref{ex2}, the covariance function of $  {T}_{2, V}$ is not constant on all $O(\mathbf{x},r)$ with $r<c_2.$

Let us define a new field $ {T} (\theta, \varphi):=  {T_1}(\theta, \varphi)+  {T}_{2,V}(\theta, \varphi).$ Then, by the independence of ${T_1}(\cdot)$ and ${T}_{2}(\cdot),$ its covariance is
\begin{align} \label{abcd}
C\left((\theta_1, \varphi_1),(\theta_2, \varphi_2)\right)
&= C_1\left((\theta_1, \varphi_1),(\theta_2, \varphi_2)\right)+
C_{2,V}\left((\theta_1, \varphi_1), (\theta_2, \varphi_2)\right).
\end{align}
As ${T_1}$ is isotropic and $C_{2, V}\left((\theta_1, \varphi_1), (\theta_2, \varphi_2)\right)=0$ for all angular distances greater than $c_2,$ the covariance defined by (\ref{abcd}) equals $C_1\left((\theta_1, \varphi_1),(\theta_2, \varphi_2)\right),$ and therefore isotropic on each circle with an angular radius $r_0>c_2.$ At the same time, as the sum of independent isotropic and strictly anisotropic fields, $  {T}$ is not isotropic at each point of $\mathbb{S}^2,$ it completes the proof.
\end{proof}

\begin{proof}  [Proof of Lemma \ref{op}]
Let us consider the field given by (\ref{sumT}).
By independence of $T_1$ and $T_2$, the covariance function of the field $T$ is equal
\begin{equation} \label{Cc}
C(\mathbf{x}_1,\mathbf{x}_2)=C_{T_1}(\mathbf{x}_1,\mathbf{x}_2)+d(\mathbf{x}_1,J)d(\mathbf{x}_2,J) C_{T_2}(\mathbf{x}_1,\mathbf{x}_2).
\end{equation}

Notice that by continuity of all functions in (\ref{Cc}), $C(\mathbf{x}_1,\mathbf{x}_2)$ is also continuous.
As $d(\mathbf{x},J) = 0,$ if $\mathbf{x} \in J,$  then $C(\mathbf{x}_1,\mathbf{x}_2)= C_{T_1}(\mathbf{x}_1,\mathbf{x}_2),$ if $\mathbf{x}_1 \in J,$ which means that the field $T(\mathbf{x}),$ is isotropic at all points of $J$.

The condition of rotational invariance of the set $\mathbb{S}^2 \setminus J$ is sufficient. Indeed, let us substitute $\mathbf{x}_1=\mathbf{x}_0$ and $\mathbf{x}_2=\mathbf{x}$ in (\ref{sumT}):
\begin{equation} \label{Dd}
C(\mathbf{x}_0,\mathbf{x})=C_{T_1}(\mathbf{x}_0,\mathbf{x})+d(\mathbf{x}_0,J)d(\mathbf{x},J)C_{T_2}(\mathbf{x}_0,\mathbf{x}).
\end{equation}
 As $J$ is rotationally invariant around the axis  passing through the points  $\mathbf{x}_0$ and $\mathbf{x}^{\prime}_0,$ then $d(\mathbf{x},J)$ and all other functions in (\ref{Dd}) take constant values for all $\mathbf{x} \in O(\mathbf{x}_0,r)$ and each angular distance $r \in (0,\pi).$ Hence,  $\mathbf{x}_0$ is a point of isotropy of $T(\mathbf{x}).$

To demonstrate that the condition is necessary, notice that for all $\mathbf{x} \in J$  the distance $d(\mathbf{x}, J) = 0$ and, therefore, by (\ref{Dd}), one gets $C(\mathbf{x}_0, \mathbf{x}) = C_{T_1}(\mathbf{x}_0,\mathbf{x}).$ At the same time, for all $\mathbf{x} \notin J,$ it holds that $d(\mathbf{x}, J) > 0$ and
 $C(\mathbf{x}_0, \mathbf{x}) > C_{T_1}(\mathbf{x}_0 ,\mathbf{x})$ as it was assumed that $C_{T_2}(\mathbf{x}_0 ,\mathbf{x}) > 0.$
Hence, if the set $\mathbb{S}^2 \setminus J$ is not rotationally invariant there is a circle $O(\mathbf{x}_0,r)$ on which $C(\mathbf{x}_0, \mathbf{x})$ takes two different values.
\end{proof}

\begin{proof} [Proof of Theorem \ref{prop4}]
The statement will be proven by explicitly constructing such fields.
Let $T_{1}(\mathbf{x})$ and $T_{2}(\mathbf{x})$ be two independent isotropic random fields, such that $T_2$ has a positive-valued covariance function.

By Lemma~\ref{op}, if $J$ is not rotationally invariant, $T(\mathbf{x})$ defined by (\ref{sumT}) gives a random field that is anisotropic at each point of $\mathbb{S}^2 \setminus J.$

Let us provide a random field that is anisotropic at each point of the rotationally invariant set $\mathbb{S}^2 \setminus J.$ Assume that $J\neq \mathbb{S}^2$ and $J\neq\emptyset$. Then there exists a unique point $\mathbf x_0\in \mathbb S^2\setminus J$ such that $J$ is rotationally invariant about the axis determined by the antipodal pair $\mathbf x_0$ and $\mathbf x_0'$. Without loss of generality, we may take $\mathbf x_0$ to be the north pole $\mathbf 0$.

Let us modify the field $T(\mathbf{x})$ given by (\ref{sumT}) as

\[\widetilde T(\mathbf x)=T_1(\mathbf x)+a(\mathbf x)\,T_2(\mathbf x),\qquad \mathbf x\in\mathbb S^2,\]
where
\[a(\mathbf x,J):=\begin{cases}
    d(\mathbf x,J)\left(1+\frac{(\pi-\theta)\theta}{\pi^2}\cos\varphi\right), & \mbox{if} \ \theta\in (0,\pi),\\
    d(\mathbf x,J),& \mbox{if} \ \theta = 0\ \mbox{or}\  \pi,
\end{cases}\]
$(\theta, \varphi)$ are spherical coordinates of $\mathbf x.$ Note that $a(\cdot,J)$ is a continuous function on $\mathbb S^2$ that vanishes on $J$ and is strictly positive on $\mathbb S^2\setminus J$.

By independence of $T_1$ and $T_2$, the covariance function of the field $\widetilde T$ is equal
\begin{equation} \label{Cc1}
\widetilde C(\mathbf{x}_1,\mathbf{x}_2)=C_{T_1}(\mathbf{x}_1,\mathbf{x}_2)+a(\mathbf{x}_1,J)a(\mathbf{x}_2,J) C_{T_2}(\mathbf{x}_1,\mathbf{x}_2).
\end{equation}

Since the set $J$ is not rotationally invariant about any point $\mathbf x\in \mathbb S^2\setminus J$ that is not a pole, none of these points can be a point of isotropy by analogous arguments to Lemma~\ref{op}.
If $\mathbf x_1$ is a pole and $(\theta_2,\varphi_2)$ are the spherical coordinates of a point $\mathbf x_2\in \mathbb S^2\setminus J$, then, by \eqref{Cc1},
\[
\widetilde C(\mathbf{x}_1,\mathbf{x}_2)=C_{T_1}(\mathbf{x}_1,\mathbf{x}_2)+d(\mathbf{x}_1,J)d(\mathbf{x_2},J)\left(1+\frac{(\pi-\theta_2)\theta_2}{\pi^2}\cos\varphi_2\right) C_{T_2}(\mathbf{x}_1,\mathbf{x}_2).
\]
As the covariance function $C_{T_2}(\mathbf{x}_1,\mathbf{x}_2)$ is continuous, there is a circle of latitude $\theta_2$, which belongs to $\mathbb S^2\setminus J$ and where  $C_{T_2}(\mathbf{x}_1,\mathbf{x}_2)>0.$ Therefore, $\widetilde C(\mathbf{x}_1,\mathbf{x}_2)$ varies with $\varphi_2$.  This implies that $\widetilde T(\mathbf x)$ is anisotropic at the poles, completing the proof.
\end{proof}

\begin{proof} [Proof of Theorem \ref{yu}]
Let us assume that an axially symmetric random field has two different axes of symmetry passing through points
$\mathbf{x}_0$ and $\tilde{\mathbf{x}}_0$, $\mathbf{x}_0 \neq \pm\tilde{\mathbf{x}}_0$.
Then, for all points $\mathbf{x}_1, \mathbf{x}_2 \in \mathbb{S}^2,$ and angles $\alpha, \alpha^{\prime} \in [0,2\pi),$ its covariance function satisfies
\[
C(\mathbf{x}_1, \mathbf{x}_2)
= C(\rho_{\mathbf{x}_0,\alpha}\mathbf{x}_1,\, \rho_{\mathbf{x}_0,\alpha}\mathbf{x}_2)
= C(\rho_{\tilde{\mathbf{x}}_0,\alpha^{\prime}} \rho_{\mathbf{x}_0,\alpha} \mathbf{x}_1,\, \rho_{\tilde{\mathbf{x}}_0,\alpha^{\prime}}\rho_{\mathbf{x}_0,\alpha}\mathbf{x}_2),
\]
Notice, that compositions of rotations $\rho_{\mathbf{x}_0,\alpha}$ and $\rho_{\tilde{\mathbf{x}}_0,\alpha^{\prime}}$ from two different rotational subgroups $SO(2)_{\mathbf{x}_0}$  and $ SO(2)_{\tilde{\mathbf{x}}_0}$ generate the entire group $SO(3).$  Therefore, the considered field is isotropic.
\end{proof}

\begin{proof} [Proof of Theorem \ref{th4}]
Case (1) and (2) are trivial.

In case (3), by (\ref{mulcov}), it is obvious that the constant multipliers on each $O(\mathbf{x}_0,r)$  preserve isotropy at the antipodal points $\mathbf{x}_0$ and $\mathbf{x}_0^{\prime}$.

The assumption that the function $V(\mathbf{x})$ takes different values on two circles of distinct radii implies that, for every $\varepsilon>0$, there exist radii $r_0^{\prime}$ and $r_0^{\prime\prime}$ such that
$|r_0^{\prime}-r_0^{\prime\prime}|<\varepsilon
$
and $V(\mathbf{x})$ takes different values on these corresponding circles. Indeed, suppose the contrary. Then, there would exist some $\varepsilon>0$ such that $V(\mathbf{x})$ is constant on every set of circles whose radii lie within an interval of length $\varepsilon$. The sphere could then be covered by finitely many overlapping spherical segments of a spherical height $\varepsilon$, with $V(\mathbf{x})$ constant on each segment. It implies that $V(\mathbf{x})$ is constant on the entire sphere, contradicting the assumption.
For any point $\mathbf{x}_1$ that is different from $\mathbf{x}_0$ and $\mathbf{x}_0^{\prime},$ there exists a circle $O(\mathbf{x}_1, r_1)$ that intersects both $O(\mathbf{x}_0, r_0^{\prime})$ and $O(\mathbf{x}_0, r_0^{\prime\prime}).$ Therefore, the covariance $C(\mathbf{x}_1,\mathbf{x})$ is not constant on $O(\mathbf{x}_1, r_1)$ and the field $T_V(\mathbf{x})$ is not isotropic at the point $\mathbf{x}_1.$

Case (4) follows directly from the above reasons, as there is no isotropy at any $\mathbf{x}_0.$
\end{proof}

\begin{proof} [Proof of Theorem \ref{thU}]
Let $C^U(\mathbf{x}_1,\mathbf{x}_2)=B(\gamma(U(\mathbf{x}_1),U(\mathbf{x}_2)))$ be the covariance function of the $U$-deformed field $T^U(\cdot)$.

If $U(\cdot)$ is not an isometry, there exist pairs of points with the same geodesic distance whose images under $U(\cdot)$ have different geodesic distances. Any isotropic covariance function $C(\cdot)$ that takes different values for such distances will yield a covariance $C^U(\cdot)$ for $T^U(\cdot)$ that does not depend only on the original geodesic distance.

First, suppose that a random field $T^U(\cdot)$ is isotropic with the corresponding $B^U(\cdot)$ defined by~(\ref{eq1}), but the corresponding bijection $U(\cdot)$ is not an isometry of the sphere.   Thus, there exist points $\mathbf{x}_0$, $\mathbf{x}_1$, and $\mathbf{x}_2$ such that
$
\gamma(\mathbf{x}_0,\mathbf{x}_1)=\gamma(\mathbf{x}_0,\mathbf{x}_2),
$
but
$
\gamma\bigl(U^{(-1)}(\mathbf{x}_0), U^{(-1)}(\mathbf{x}_1)\bigr)\neq
\gamma\bigl(U^{(-1)}(\mathbf{x}_0), U^{(-1)}(\mathbf{x}_2)\bigr),$
where $U^{(-1)}(\cdot)$ denotes the inverse mapping.

Consider an isotropic random field $T(\cdot)$ with its covariance function given by $B\bigl(\gamma(\mathbf{x}_1,\mathbf{x}_2)\bigr)$,  due to~(\ref{eq1}).
Let $B(\cdot)$ be a strictly decreasing function on its domain $[0,\pi]$. For example, the power exponential covariance function~\cite{Gneiting} can serve as $B(\cdot)$. Thus, it holds that
\begin{align*}
    &
C\bigl(U^{(-1)}(\mathbf{x}_0),U^{(-1)}(\mathbf{x}_1)\bigr)
= B\bigl(\gamma(U^{(-1)}(\mathbf{x}_0), U^{(-1)}(\mathbf{x}_1))\bigr)\\
& \quad \neq
B\bigl(\gamma(U^{(-1)}(\mathbf{x}_0), U^{(-1)}(\mathbf{x}_2))\bigr)
= C\bigl(U^{(-1)}(\mathbf{x}_0),U^{(-1)}(\mathbf{x}_2)\bigr),
\end{align*}
which, by construction, implies that
\[
B^U\bigl(\gamma(\mathbf{x}_0,\mathbf{x}_1)\bigr)
= C^U\bigl(\mathbf{x}_0,\mathbf{x}_1\bigr)
\neq
C^U\bigl(\mathbf{x}_0,\mathbf{x}_2\bigr)
= B^U\bigl(\gamma(\mathbf{x}_0,\mathbf{x}_2)\bigr).
\]

As $
\gamma(\mathbf{x}_0,\mathbf{x}_1)=\gamma(\mathbf{x}_0,\mathbf{x}_2),
$ it gives a contradiction with the isotropy of $T^U(\cdot)$.

Conversely, suppose that $U(\cdot)$ is an isometry of the sphere.   By Remark~\ref{isom}, since rotations and reflections about great circles map all spherical circles and their centers to circles and their corresponding centers, the isometry mapping preserves the stationarity of the field. This completes the proof.
    \end{proof}

\begin{proof} [Proof of Corollary~\ref{cor2}]
The isotropy at the point $\mathbf{x}_0$ immediately follows as the transformation $U(\cdot)$ preserves the point $\mathbf{x}_0$ and map the set of all circles $O(\mathbf{x}_0, r),$ $r\in (0,\pi)$ in itself.

Consider an isotropic random field $T(\cdot)$ with covariance function $B\bigl(\gamma(\mathbf{x}_1,\mathbf{x}_2)\bigr)$ defined by~\eqref{eq1}. Notice that for each $\varepsilon>0$ there exist at least two values $r_1$ and $r_2$ such that $0< r_1 < r_2 < \varepsilon$ and $B(r_1)\neq B(r_2)$. Indeed, if this were not the case, then the isotropic covariance function $B(r)$ would be constant on some interval $[0,\varepsilon]$. Consequently, for any $\mathbf{x}'\in \mathbb{S}^2$, the corresponding covariance function $C(\mathbf{x}',\mathbf{x})$ would be constant on the disc $D(\mathbf{x}',\varepsilon)$ centered at $\mathbf{x}'$. By the properties of positive-definite functions, this would imply that it is constant on $\mathbb{S}^2$, which contradicts Condition~\ref{cond2}. Moreover, the values $r_1$ and $r_2$ can be chosen arbitrarily close to each other.

Without loss of generality, we may assume that the point $\mathbf{x}_0$ is the pole $\mathbf{0}$. A circular scaling transformation at $\mathbf{0}$ is determined by a scaling function $\lambda(\cdot)$ and maps every spherical circle into a (possibly different) circle.
Since $\lambda(\cdot)$ is continuous, satisfies $\lambda(0)=0$ and $\lambda(\pi)=\pi$, and is not the identity, there exist $\theta_0 \in (0,\pi)$ and $\delta_0>0$ such that either
$2\lambda(\theta_0)$ is greater or smaller than $\lambda(\theta_0+\delta)+\lambda(\theta_0-\delta)$
for all $\delta\in(0,\delta_0).$

Indeed, consider the function
$g(\theta):=\lambda(\theta)-\theta,$ $\theta\in[0,\pi].$
Then $g(\cdot)$ is continuous on $[0,\pi]$ and satisfies $g(0)=g(\pi)=0$. Since $g$ is not identically zero, it attains either a positive maximum or a negative minimum at some interior point $\theta\in(0,\pi)$.

Suppose that $g(\cdot)$ attains a local maximum. Then there exists the rightmost point $\theta_0$ at which this maximum is attained, and there is some $\delta_0>0$ such that
$g(\theta_0)>g(\theta_0+\delta),$ $ g(\theta_0)\ge g(\theta_0-\delta)$
for all $\delta\in(0,\delta_0)$. Consequently,
\[
2\lambda(\theta_0)>\lambda(\theta_0+\delta)+\lambda(\theta_0-\delta),
\qquad \delta\in(0,\delta_0),
\]
and therefore
\[
\lambda(\theta_0)-\lambda(\theta_0-\delta)>\lambda(\theta_0+\delta)-\lambda(\theta_0),
\qquad \delta\in(0,\delta_0).
\]

It follows that all circles centered at latitude $\theta_0$ with radii not exceeding $\delta_0$ are mapped by $\lambda(\cdot)$ into non-circular curves.

By the properties of the covariance function of the field $T(\cdot)$, choose
$0<r_1<r_2<\varepsilon<\delta_0
$
and a circle on the transformed sphere with the center at $\lambda(\theta_0)$ that intersects both non-circular curves obtained as the images of the circles of radii $r_1$ and $r_2$. This is possible because $r_1$ and $r_2$ may be chosen arbitrarily close. Then the covariance function of the random field $T^U(\cdot)$ takes two distinct values on this circle, which contradicts the global isotropy of the field.

The same argument applies when $g(\cdot)$ attains a local minimum.
\end{proof}

\begin{proof} [Proof of Proposition \ref{prop5}]
It follows from (\ref{iso}) and Definitions \ref{def20} and \ref{def22} that only isotropic, diagonal anisotropic, and harmonic degree-dependent anisotropic fields have spectra of the form $C_{\ell \ell^{\prime} m},$ which is sufficient and necessary for axial symmetry around the axis through $\mathbf{0}$. It follows from their spectral representations that these three classes are disjoint.  By the properties of axially symmetric fields around the axis through pole $\mathbf{0},$ the poles with $\theta = 0$ and $\theta = \pi$ are their points of isotropy.

Then, the expression (\ref{cov_ax}) for the covariance function directly follows from (\ref{spec}), (\ref{ylm}) and (\ref{cov}).
By~(\ref{Ylms}) and the fact that spherical harmonics form a linearly independent system on $\mathbb{S}^2$, the application of the differentiations in (\ref{dif1}) to (\ref{cov}) gives that for all $m$ and $m^{\prime}$ it holds
\[(m-m^{\prime}) C_{\ell \ell^{\prime} m m^{\prime}} =  0.
\]
 This is possible if and only if the coefficients $C_{\ell \ell^{\prime} m m^{\prime}} = 0$ for $m \ne m^{\prime}$.
\end{proof}

\begin{proof} [Proof of Proposition \ref{prop5_1}]
It follows from (\ref{iso}) and Definitions \ref{def21} and \ref{def22} that only isotropic, diagonal anisotropic, and harmonic order-dependent fields have spectra of the form $C_{\ell m m^{\prime}}.$

The expression for the covariance function directly follows from (\ref{ylm}), (\ref{cov}), and $C_{\ell \ell^{\prime} m m^{\prime}} = \delta_{\ell}^{\ell^{\prime}} C_{\ell m m^{\prime}}.$ Since the spherical harmonics are eigenfunctions of the Laplace–Bel\-trami operator, it holds
\[
\Delta_{(\theta, \phi)} Y_{\ell}^{m}(\theta, \phi) = -\ell(\ell + 1)\, Y_{\ell}^{m}(\theta, \phi),
\quad
\ell \in \mathbb{N}_0, \quad m = -\ell, \dots, \ell.
\]
Hence, applying (\ref{dif2}) to (\ref{cov}) and using linear independence of spherical harmonics on $\mathbb{S}^2$ yield
\[
(\ell(\ell+1)- \ell^{\prime}(\ell^{\prime}+1))C_{\ell \ell^{\prime} m m^{\prime}} =0,
\]
which proves that the coefficients $C_{\ell \ell^{\prime} m m^{\prime}} = 0$ if $\ell \neq \ell^{\prime}$.
\end{proof}

\begin{proof} [Proof of Proposition \ref{pisot}]
Consider the covariance function (\ref{cov}) for two points, $\mathbf{x}_1 = \mathbf{0}$ and $\mathbf{x}_2$ with the spherical coordinates $(\theta, \varphi)$.
If the pole $\mathbf{0}$ is a point of isotropy, then, by (\ref{Yl0}) and (\ref{ylm}), the covariance
\begin{align*}
\operatorname{Cov}(T(\mathbf{0}), T(\mathbf{x}_2))
&= \sum_{\ell, \ell^{\prime} \in \mathbb{N}_0}
\sum_{m=-\ell}^{\ell} \sum_{m^{\prime}=-\ell^{\prime}}^{\ell^{\prime}}
C_{\ell \ell^{\prime} m m^{\prime}} Y_{\ell m}(\mathbf{0}) Y_{\ell^{\prime} m^{\prime}}^*(\mathbf{x}_2) \notag\\
&= \sum_{\ell, \ell^{\prime} \in \mathbb{N}_0} \sum_{m^{\prime}=-\ell^{\prime}}^{\ell^{\prime}}
C_{\ell \ell^{\prime} 0 m^{\prime}} \sqrt{\frac{(2\ell+1)}{4\pi}}
Y_{\ell^{\prime} m^{\prime}}^*(\mathbf{x}_2) \notag\\
&= \sum_{m^{\prime}=-\infty}^{+\infty} e^{-i m^{\prime} \varphi}
\sum_{\ell^{\prime} \geq |m^{\prime}|}
\sum_{\ell \in \mathbb{N}_0} C_{\ell \ell^{\prime} 0 m^{\prime}}
\frac{\sqrt{(2\ell+1)(2\ell^{\prime}+1)}}{4\pi}
P_{\ell^{\prime} m^{\prime}}(\cos(\theta)),
\end{align*}
depends only on $\theta$. Due to the linear independence properties of the sets $\{ e^{i m^{\prime} \varphi} \}$ and $\{ P_{\ell^{\prime} m^{\prime}}(\cos(\theta)) \}$, this is possible only when, for any $\ell^{\prime} \in \mathbb{N}$ and $m^{\prime} \neq 0$, it holds that
\[
\sum_{\ell \in \mathbb{N}_0} C_{\ell \ell^{\prime} 0 m^{\prime}} \sqrt{2\ell+1} = 0.
\]
\end{proof}

\begin{proof} [Proof of Theorem \ref{th5}]
Using (\ref{spec}), (\ref{Ylll}), (\ref{eqzz}) and the definition of Gaunt integral, given by~(\ref{eq:gaunt}) in Appendix~\ref{appB}, yields
\begin{align*}
	a_{\ell m,V} & = \int_{\mathbb{S}^2} \left(\sum_{\ell _1 \in \mathbb{N}_0} \sum_{m_1=-\ell _1}^{\ell _1} v_{\ell _1 m_1} Y_{\ell _1 m_1} (\mathbf{x}) \right)\left(\sum_{\ell _2\in \mathbb{N}_0}\sum_{m_2=-\ell _2}^{\ell _2} a_{\ell _2 m_2} Y_{\ell _2 m_2} (\mathbf{x}) \right) Y^{\ast}_{\ell m}(\mathbf{x}) d \mathbf{s} (\mathbf{x})\\
			\notag & =  \sum_{\ell_1 \in \mathbb{N}_0}\sum_{\ell_2\in \mathbb{N}_0} \sum_{m_1=-\ell_1}^{\ell_1} \sum_{m_2=-\ell_2}^{\ell_2} v_{\ell_1 m_1}a_{\ell_2 m_2}  \int_{\mathbb{S}^2}  Y_{\ell_1 m_1}  (\mathbf{x}) Y_{\ell_2 m_2} (\mathbf{x})Y^{\ast}_{\ell m}(\mathbf{x}) d \mathbf{s} (\mathbf{x})\\
			& = (-1)^m  \sum_{\ell_1 \in \mathbb{N}_0}\sum_{\ell_2\in \mathbb{N}_0} \sum_{m_1=-\ell_1}^{\ell_1} \sum_{m_2=-\ell_2}^{\ell_2} v_{\ell_1 m_1}a_{\ell_2 m_2}\sqrt{\frac{(2\ell_1+1)(2\ell_2+1)(2\ell+1)}{4\pi}} \\
			& \quad\times \begin{pmatrix}
				\ell_1&\ell_2& \ell\\
				0& 0& 0
			\end{pmatrix}
			\begin{pmatrix}
				\ell_1&\ell_2& \ell\\
				m_1& m_2& -m
			\end{pmatrix}.
		\end{align*}
		 Since for all $\ell \in \mathbb{N}_0$ and $m \in \{-\ell, \ldots, \ell \}$ it holds that $\mathbf{E}a_{\ell m} = 0,$ the random variables $a_{\ell m,V} $ and the field $T_V$ are centered.

		Then, by  (\ref{Yl0})
		\begin{align}
			\mathbf{E}(a_{\ell m,V}a^\ast_{\ell^\prime m^\prime,V}) =& (-1)^{m+m^\prime}   \sum_{\ell_1 \in \mathbb{N}_0} \sum_{\ell_1^\prime\in \mathbb{N}_0} \sum_{\ell_2 \in \mathbb{N}_0} \sum_{ \ell_2^\prime\in \mathbb{N}_0} \sum_{m_1=-\ell_1}^{\ell_1} \sum_{m_1^\prime=-\ell^\prime_1}^{\ell^\prime_1}\sum_{m_2=-\ell_2}^{\ell_2} \sum_{m_2^\prime=-\ell_2^\prime}^{\ell_2^\prime}v_{\ell_1 m_1}v_{\ell_1^\prime m_1^\prime}^*\nonumber\\
			\quad\times & \sqrt{\frac{(2\ell_1+1)(2\ell_2+1)(2\ell+1)}{4\pi}} \sqrt{\frac{(2\ell^\prime_1+1)(2\ell^\prime_2+1)(2\ell^\prime+1)}{4\pi}} \mathbf{E}\left(a_{\ell_2 m_2}a^{\ast}_{\ell^\prime_2 m^\prime_2}\right)\nonumber\\
			\quad\times&	\begin{pmatrix}
				\ell_1&\ell_2& \ell\\
				0& 0& 0
			\end{pmatrix}
			\begin{pmatrix}
				\ell^\prime_1&\ell^\prime_2& \ell^\prime\\
				0& 0& 0
			\end{pmatrix}
			\begin{pmatrix}
				\ell_1&\ell_2& \ell\\
				m_1& m_2& -m
			\end{pmatrix}
			\begin{pmatrix}
				\ell^\prime_1&\ell^\prime_2& \ell^\prime\\
				m^\prime_1& m^\prime_2& -m^\prime
			\end{pmatrix}\nonumber\\
			= & (-1)^{m+m^\prime} \sum_{\ell_1\in \mathbb{N}_0}  \sum_{\ell_1^\prime\in \mathbb{N}_0} \sum_{\ell_2\in \mathbb{N}_0} \sum_{m_1=-\ell_1}^{\ell_1} \sum_{m_1^\prime=-\ell^\prime_1}^{\ell^\prime_1}\sum_{m_2=-\ell_2}^{\ell_2} v_{\ell_1 m_1}v_{\ell_1^\prime m_1^\prime}^*C_{\ell_2}\nonumber\\
			\quad\times&\frac{2\ell_2+1}{4\pi}\sqrt{(2\ell_1+1)(2\ell_1^\prime+1)(2\ell+1)(2\ell^{\prime}+1)}\nonumber\\
			\quad\times&	\begin{pmatrix}
				\ell_1&\ell_2& \ell\\
				0& 0& 0
			\end{pmatrix}
			\begin{pmatrix}
				\ell^\prime_1&\ell_2& \ell^\prime\\
				0& 0& 0
			\end{pmatrix}
			\begin{pmatrix}
				\ell_1&\ell_2& \ell\\
				m_1& m_2& -m
			\end{pmatrix}
			\begin{pmatrix}
				\ell^\prime_1&\ell_2& \ell^\prime\\
				m^\prime_1& m_2& -m^\prime
			\end{pmatrix}. \label{eq:cov}
		\end{align}
        It follows from the selection rules $|\ell^\prime_1-\ell_2| \leq \ell^\prime \leq\ \ell^\prime_1+\ell_2$  for the Wigner $3j$-symbols in Appendix~\ref{appB}, that  $|\ell_2 -\ell^\prime| \leq \ell^\prime_1 \leq \ell_2 +\ell^\prime$ and analogously $|\ell_2 -\ell| \leq \ell_1 \leq \ell_2 +\ell.$ Also, because for non-zero $3j$-symbols in (\ref{eq:cov}) it holds $m_1=m-m_2$ and $m^\prime_1=m^\prime-m_2$, the expressions for the coefficients $a_{\ell m,V}$ and $C_{\ell \ell^{\prime} m m^{\prime}}$ above can be rewritten as in~(\ref{eq:mcoeff}) and (\ref{spectral_mat}), respectively.
      Finally, noting that
		\begin{align*}
			\mathbf{E}\left( T_V (\mathbf{x}_1)T_V (\mathbf{x}_2)\right)& = \sum_{\ell\in \mathbb{N}_0} \sum_{ \ell^\prime \in \mathbb{N}_0} \sum_{m=-\ell}^{\ell}\sum_{m^\prime =-\ell^\prime}^{\ell^\prime} \mathbf{E}\left(a_{\ell m,V}a^\ast_{\ell^\prime m^\prime,V}\right) Y_{\ell m}(\mathbf{x}_1)Y^\ast_{\ell^\prime m^\prime}(\mathbf{x}_2)
		\end{align*}
        completes the proof of the theorem.
\end{proof}

\begin{proof} [Proof of Theorem \ref{Ta}]
		Choosing the coordinate system which aligns the isotropy center with one of the two poles, the selection rules for the Wigner $3j$-symbols, see Appendix~\ref{appB}, yield
		\begin{align*}
			a_{\ell m,V}& = (-1)^m\sum_{\ell_1\in \mathbb{N}_0} \sum_{\ell_2\in \mathbb{N}_0}  \sum_{m_2=-\ell_2}^{\ell_2} v_{\ell_1 0} a_{\ell_2 m_2}\sqrt{\frac{(2\ell_1+1)(2\ell_2+1)(2\ell+1)}{4\pi}}\\
            & \times \begin{pmatrix}
				\ell_1&\ell_2& \ell\\
				0& 0& 0
			\end{pmatrix}
			\begin{pmatrix}
				\ell_1&\ell_2& \ell\\
				0& m_2& -m
			\end{pmatrix}\\
			&= (-1)^m  \sum_{\ell_1\in \mathbb{N}_0} \sum_{\ell_2\in \mathbb{N}_0}  v_{\ell_1 0} a_{\ell_2 m}\sqrt{\frac{(2\ell_1+1)(2\ell_2+1)(2\ell+1)}{4\pi}} \begin{pmatrix}
				\ell_1&\ell_2& \ell\\
				0& 0& 0
			\end{pmatrix}
			\begin{pmatrix}
				\ell_1&\ell_2& \ell\\
				0& m& -m
			\end{pmatrix}.
		\end{align*}
		Thus, \eqref{eq:cov} becomes
		\begin{align*}
			\mathbf{E}(a_{\ell m,V}a^\ast_{\ell^\prime m^\prime,V})	& = \delta^{m^{\prime}}_{m}  \sum_{\ell_1\in \mathbb{N}_0} \sum_{\ell_1^\prime\in \mathbb{N}_0} \sum_{\ell_2\in \mathbb{N}_0}  v_{\ell_1 0}v_{\ell_1^\prime 0} \frac{2\ell_2+1}{4\pi}C_{\ell_2}\\
			\notag	&\quad \times \sqrt{(2\ell_1+1)(2\ell_1^\prime+1)(2\ell+1)(2\ell^{\prime}+1)}\\
			&\quad\times \begin{pmatrix}
				\ell_1&\ell_2& \ell\\
				0& 0& 0
			\end{pmatrix}
			\begin{pmatrix}
				\ell^\prime_1&\ell_2& \ell^\prime\\
				0& 0& 0
			\end{pmatrix}
			\begin{pmatrix}
				\ell_1&\ell_2& \ell\\
				0& m& -m
			\end{pmatrix}
			\begin{pmatrix}
				\ell^\prime_1&\ell_2& \ell^\prime\\
				0& m& -m
			\end{pmatrix}\\
			& =  \delta_{m}^{m^\prime}C_{\ell \ell^\prime m}.
		\end{align*}
Analogously to the proof of Theorem~\ref{th5}, the application of the selection rules for the Wigner $3j$-symbols gives the expressions for the coefficients $a_{\ell m,V}$ and $C_{\ell \ell^{\prime} m}.$
\end{proof}

\begin{proof} [Proof of Corollary \ref{C2}]

        By substituting $\mathbf{x}_2 = \mathbf{0},$ (\ref{Yl0}) and (\ref{eq:cllm}) in
        \[\mathbf{E}\left( T_V (\mathbf{x}_1)T_V (\mathbf{x}_2)\right) = \sum_{\ell\in \mathbb{N}_0} \sum_{ \ell^\prime \in \mathbb{N}_0} \sum_{m=-\min(\ell,\ell^{\prime})}^{\min(\ell,\ell^{\prime})} C_{\ell \ell^{\prime} m} Y_{\ell m}(\mathbf{x}_1)Y^\ast_{\ell^\prime m}(\mathbf{x}_2)
		\]
        one obtains
		\begin{align*}
			\mathbf{E}\left( T_V (\mathbf{x})T_V (\mathbf{0})\right) & =\sum_{\ell \in \mathbb{N}_0}  \sum_{\ell^\prime \in \mathbb{N}_0}\sum_{\ell_1,\ell_1^\prime\in \mathbb{N}_0}\sum_{\ell_2\in \mathbb{N}_0}  v_{\ell_1 0}v_{\ell_1^\prime 0} \frac{2\ell_2+1}{4\pi}C_{\ell_2}	\begin{pmatrix}
				\ell_1&\ell_2& \ell\\
				0& 0& 0
			\end{pmatrix}^2
			\begin{pmatrix}
				\ell^\prime_1&\ell_2& \ell^\prime\\
				0& 0& 0
			\end{pmatrix}^2\\
			\notag	&\quad \times \sqrt{(2\ell_1+1)(2\ell_1^\prime+1)(2\ell+1)(2\ell^{\prime}+1)}  Y_{\ell 0}(\mathbf{x})\sqrt{\frac{2\ell^\prime+1}{4\pi}}\\
			&= \sum_{\ell \in \mathbb{N}_0}\sum_{\ell_1\in \mathbb{N}_0} \sum_{\ell_1^\prime\in \mathbb{N}_0} \sqrt{(2\ell_1+1)(2\ell_1^\prime+1)} v_{\ell_1 0}v_{\ell_1^\prime 0} \sum_{\ell_2\in \mathbb{N}_0} \frac{2\ell_2+1}{4\pi}C_{\ell_2}\\
			&\quad \times 	\begin{pmatrix}
				\ell_1&\ell_2& \ell\\
				0& 0& 0
			\end{pmatrix}^2
			\left(\sum_{\ell^\prime \in \mathbb{N}_0}	(2\ell^\prime+1)\begin{pmatrix}
				\ell^\prime_1&\ell_2& \ell^\prime\\
				0& 0& 0
			\end{pmatrix}^2\right)\sqrt{\frac{2\ell+1}{4\pi}}  Y_{\ell 0}(\mathbf{x}).
		\end{align*}
		The application of the formula (\ref{ylm}) and the orthogonality property of the Wigner \mbox{$3j$-symbols,} consult Appendix~\ref{appB}, yields
		\begin{align*}
			\mathbf{E}\left( T_V (\mathbf{x})T_V (\mathbf{0})\right)
			& = \left( \sum_{\ell_1^\prime\in \mathbb{N}_0} \sqrt{2\ell_1^\prime +1 } v_{\ell_1^\prime 0}   \right)\sum_{\ell_1\in \mathbb{N}_0} \sqrt{2\ell_1 +1 } v_{\ell_1 0} \sum_{\ell_2\in \mathbb{N}_0} \frac{2\ell_2+1}{4\pi}C_{\ell_2}  \\
			& \times \sum_{\ell\in  \mathbb{N}_0} \frac{2\ell+1}{4\pi}P_{\ell}(\cos \theta)	\begin{pmatrix}
				\ell_1 & \ell_2 & \ell\\
				0 & 0 & 0%
			\end{pmatrix}^2.
		\end{align*}
Finally, by applying the selection rules for the Wigner
$3j$-symbols, one obtains the statement of the corollary.
	\end{proof}

\begin{proof} [Proof of Theorem \ref{thtt}]

It follows from (\ref{Ylms}), (\ref{eq:ic})  and (\ref{legen}) that
\begin{align*}
T_{\odot H}(\mathbf{x})
&= \int_{\mathbb{S}^2}
\sum_{\ell \in \mathbb{N}_0} \sum_{m=-\ell}^{\ell} \sum_{\ell^\prime\in \mathbb{N}_0}
a_{\ell m} h_{\ell^\prime}\sum_{m^{\prime}=-\ell^\prime}^{\ell^\prime} Y_{\ell m}(\mathbf{x}^{\prime})\, Y_{\ell^\prime m^\prime}(\mathbf{x})\,
Y_{\ell^{\prime} m^{\prime}}^{*}(\mathbf{x^\prime}) \, d\mathbf{s}(\mathbf{x^\prime}) \\
&= \sum_{\ell \in \mathbb{N}_0} \sum_{m=-\ell}^{\ell}
a_{\ell m}\, h_{\ell}\,
Y_{\ell m}(\mathbf{x}),
\end{align*}
which gives the representation (\ref{tdot}).

Then, the representation for the covariance function of $T_{\odot H}(\cdot)$ immediately follows from (\ref{iso})  and the Legendre addition theorem for spherical harmonics
\begin{equation*}
\sum_{m=-\ell}^{\ell} Y_{\ell m}(\mathbf{x}) \, Y_{\ell m}^{*}(\mathbf{x}^{\prime})
=\frac{2\ell+1}{4\pi}P_\ell(\cos\gamma(\mathbf{x},\mathbf{x}')). 
\end{equation*}
\end{proof}

\begin{proof} [Proof of Theorem \ref{th8}]

By (\ref{Ylms}), (\ref{eq:cac}) and the relation (\ref{AA}) in Appendix~\ref{appC}, taking into account that for a real-valued convolution kernel it holds $H(\mathbf{x})=H^*(\mathbf{x}),$ one obtains that
\begin{align*}
T_{\oplus H}(\mathbf{x})
&= \int_{\mathbb{S}^2} \sum_{\ell \in \mathbb{N}_0} \sum_{m=-\ell}^{\ell}
   \sum_{L \in \mathbb{N}_0} \sum_{M=-L}^{L} \sum_{m^{\prime}=-L}^{L}
   a_{\ell m}\, h_{LM}^{\ast}\, D^{L{\ast}}_{M m^{\prime}}(\rho) \,
   Y_{\ell m}(\mathbf{x}^{\prime}) \, Y^{\ast}_{L m^{\prime}}(\mathbf{x}^{\prime}) \, d \mathbf{s}(\mathbf{x}^{\prime}) \\
&= \sum_{\ell \in \mathbb{N}_0} \sum_{m=-\ell}^{\ell}
   a_{\ell m}  \sum_{M=-\ell}^{\ell}
   h_{\ell M}^{\ast}\, D^{\ell \ast}_{M m}(\rho)\\
   &= \sum_{\ell \in \mathbb{N}_0} \sum_{m=-\ell}^{\ell} \sum_{M=-\ell}^{\ell} (-1)^m
   a_{\ell m} h^{\ast}_{\ell M} e^{i(M-m)\varphi} d^{\ell}_{M m}(\theta),
\end{align*}
where the last equality follows by applying (\ref{BB}) in Appendix~\ref{appC} to $D_{Mm}^{\ell}\left(\varphi,\theta,\pi-\theta\right).$

As $T_\oplus$ is centered and by (\ref{iso}), its covariance function takes the form
		\begin{align*}
			\mathbf{E}&\left(T_{\oplus H} (\mathbf{x}_1)T_{\oplus H}(\mathbf{x}_2)\right)  = \sum_{\ell\in \mathbb{N}_0} \sum_{\ell ^\prime\in \mathbb{N}_0}
            \sum_{m=-\ell}^{\ell}
            \sum_{m^\prime=-\ell^\prime}^{\ell^\prime}
            \sum_{M=-\ell}^{\ell} \sum_{M^\prime=-\ell^\prime}^{\ell^\prime} \mathbf{E}\left(a_{\ell m}a_{\ell^\prime m^\prime}^\ast\right) h^\ast_{\ell M}h_{\ell^\prime M^\prime} \\
			 & \times\,(-1)^{m+m^\prime} e^{i\left[\left(M-m\right)\varphi_1-\left(M^\prime-m^\prime\right)\varphi_2\right]}d^{\ell}_{Mm}(\theta_1)
			d^{\ell^\prime}_{M^\prime m^\prime}(\theta_2)\\
   			& = \sum_{\ell\in \mathbb{N}_0}
            \sum_{m=-\ell}^{\ell}
          \sum_{M=-\ell}^{\ell}\sum_{M^\prime=-\ell}^{\ell} C_{\ell } h^\ast_{\ell M}h_{\ell  M^\prime}  e^{-im(\varphi_1-\varphi_2)}e^{i(M\varphi_1-M^{\prime}\varphi_2)}
			d^{\ell}_{M m}(\theta_1) d^{\ell}_{M^{\prime} m}(\theta_2)\\
			&= \sum_{\ell \in \mathbb{N}_0 } \sum_{M=-\ell}^{\ell}
          \sum_{M^\prime=-\ell}^{\ell}  C_{\ell } h^\ast_{\ell M}h_{\ell  M^\prime} e^{i\left(M\varphi_1-M^\prime\varphi_2 \right)} e^{-iM \Phi} e^{-i M^{\prime} \Psi} d^{\ell}_{M M^\prime} (\Theta),
		\end{align*}
where the last equality follows from the symmetry property and the addition theorem for elements of the Wigner $D$-matrices in Appendix~\ref{appC}.
\end{proof}

\begin{proof} [Proof of Theorem \ref{prop:sift}]
		By (\ref{eq:sc}) and (\ref{oper}), applying $\mathcal{L}_{\mathbf{x}}$ to the isotropic random field $T$ yields
		\begin{align*}
			T_{\circledast} (\mathbf{x})& = \int_{\mathbb{S}^2} \left(\sum_{\ell \in \mathbb{N}_0}   \sum_{m=-\ell}^{\ell} a_{\ell m} Y_{\ell m} \left(\mathbf{x}\right)  Y_{\ell m} \left(\mathbf{x^\prime}\right)\right)
			\left(\sum_{\ell^\prime \in \mathbb{N}_0} \sum_{m^\prime = -\ell^{\prime}}^{\ell^{\prime}} h_{\ell^\prime m^\prime}^\ast Y_{\ell^{\prime} m^{\prime}}^\ast \left(\mathbf{x^\prime}\right)  \right) d \mathbf{s}(\mathbf{x^\prime})\\
			&=\sum_{\ell \in \mathbb{N}_0}  \sum_{m=-\ell}^{\ell} a_{\ell m} h_{\ell m}^\ast Y_{\ell m} \left(\mathbf{x}\right).
		\end{align*}
		Since $T$ is centered, also $\mathbf{E}\left(T_{\circledast}(\mathbf{x})\right)=0$ for any $\mathbf{x} \in \mathbb{S}^2$. Finally, by isotropy of $T,$ we have that
		\begin{equation*}
			\mathbf{E}\left(a_{\ell m,{\circledast}}a_{\ell^\prime m^\prime ,{\circledast}}^\ast\right)= C_{\ell } \left \vert h_{\ell m} \right \vert^2 \delta_{\ell}^{\ell^\prime}\delta_{m}^{m^\prime},
		\end{equation*}
which immediately implies other results of Theorem \ref{prop:sift}.
\end{proof}

\begin{proof}[Proof of Proposition~\ref{prop8}]
It follows from Theorem~\ref{prop:sift}, condition~(\ref{iso}), and Definition~\ref{def22} that a sifting convolutional random field is either isotropic or diagonal anisotropic.

The angular power spectrum
$C_{\ell m, \circledast} = C_{\ell}\, |h_{\ell m}|^2$
varies with $m$ for some fixed $\ell$ if and only if there exist
$m \neq m^{\prime}$ such that $|h_{\ell m}| \neq |h_{\ell m^{\prime}}|$. To represent in law a diagonal anisotropic field as a sifting convolution, one needs two sets of non–negative coefficients $\{C_{\ell}\}$ and
$\{h_{\ell m}\}$, such that its angular power spectrum can be given as $C_{\ell m} = C_{\ell}\, h_{\ell m}^2,$ where $h_{\ell m} = h^*_{\ell m}.$

Since these coefficients correspond to harmonic representations of
isotropic random fields and $L_2(\mathbb{S}^2)$-integrable convolution
kernel, they must satisfy the conditions
\[
\sum_{\ell \in \mathbb{N}_0} (2\ell+1)C_{\ell} =\sum_{\ell \in \mathbb{N}_0} \sum_{m=-\ell}^{\ell}C_{\ell} < \infty
\quad \text{and} \quad
\sum_{\ell \in \mathbb{N}_0} \sum_{m=-\ell}^{\ell} h_{\ell m}^2 < \infty.
\]

Hence, the problem is equivalent to factorisation $a_n = a_n^{(1)} a_n^{(2)},$
where $\{a_n\},$ $\{a_n^{(1)}\}$ and $\{a_n^{(2)}\}$ are non–negative sequences,
and the series formed by adding elements of each sequence must converge.

Let us show that
\begin{equation} \label{suman}
\sum_{n} \sqrt{a_n} < \infty
\end{equation}
is a sufficient and necessary condition for it.

If (\ref{suman}) holds true, then selecting
$a_n^{(1)} = a_n^{(2)} = \sqrt{a_n}$
gives the required result, as
$\sum_{n} a_n <  \max_{n}(\sqrt{a_n})\cdot\sum_{n}\sqrt{a_n} < +\infty,$ because the maximum is finite by convergence condition (\ref{suman}). Now let us assume that $\{a_n\}$ is factorisable into two sequences with
convergent series, but (\ref{suman}) does not hold. Then, by Hölder’s inequality,
\[
\sum_n \sqrt{a_n}
= \sum_n \sqrt{a_n^{(1)} a_n^{(2)}}
\le \left( \sum_n a_n^{(1)} \sum_n a_n^{(2)} \right)^{1/2}
< +\infty,
\]
and we obtain a contradiction.

The application to the sequence $\{C_{\ell m}\}$ completes the proof.
  \end{proof}

\begin{proof}[Proof of Theorem~\ref{defor_spec}]
As the deformation $U(\cdot)$ does not alter the mean-square integrability of the spherical harmonics $\{Y_{\ell m}(\cdot)\}$, their $U$-deformed versions can be represented as
\begin{equation}\label{spherU}
Y_{\ell m}(U(\mathbf{x}))
=
\sum_{\ell' \in \mathbb{N}_0}
\sum_{m'=-\ell'}^{\ell'}
Q^{U}_{\ell m,\ell' m'} \,
Y_{\ell' m'}(\mathbf{x}),
\end{equation}
where the corresponding coefficients are defined by
\begin{equation*}
Q^{U}_{\ell m,\ell' m'}
=
\int_{\mathbb{S}^2}
Y_{\ell m}(U(\mathbf{x}))\,
{Y_{\ell' m'}^*(\mathbf{x})} \,
d\mathbf{s}(\mathbf{x}).
\end{equation*}
Then, the results of the theorem follow straightforwardly from substituting~\eqref{spherU} into
\[T^U(\mathbf{x})=T(U(\mathbf{x}))
=
\sum_{\ell \in \mathbb{N}_0}
\sum_{m=-\ell}^{\ell}
a_{\ell m}\, Y_{\ell m}(U(\mathbf{x}))\]
and using the uncorrelatedness identity~(\ref{iso}).   \end{proof}

\appendix

\section{Simulation of spherical random fields}\label{appA}
The realizations of the random fields used in the examples were generated using the \texttt{healpy} package in Python. The field values were simulated at HEALPix pixels according to the chosen angular power spectrum. The results were visualized using the Mollweide projection of the sphere \cite{Snyder1987}.

The HEALPix scheme divides the sphere into equal-area pixels. The resolution is controlled by the parameter $N_{\text{side}} = 2^j$, $j \in \mathbb{N}_0$, which determines the fineness of the pixelization. The total number of pixels on the sphere is
$N_{\text{pix}} = 12 \, N_{\text{side}}^2.
$
The maximum multipole moment resolved in the spherical harmonic expansion is
$\ell_{\text{max}} = 3 \, N_{\text{side}} - 1,
$
which sets the smallest angular scale captured by the simulation~\cite{gorski2005healpix}.

For all examples in this paper, we used the parameter $N_{\text{side}} = 512$, corresponding to $N_{\text{pix}} = 3,145,728$ pixels and the maximum multipole $\ell_{\text{max}} = 1535$.

\section{Wigner symbols and Clebsch-Gordan coefficients}\label{appB}

The Wigner $3j$-symbols and Clebsch-Gordan coefficients have been introduced in quantum mechanics and angular momentum theory. They are used in the representation theory of $SO(3)$ and encode the coupling of angular momenta $\ell$ and $\ell^\prime$, to form a total angular momentum $L$.
We briefly recall their definitions and main properties. Further details can be found in \cite{vmk}.

In the context of random fields on the sphere, Clebsch-Gordan coefficients are used to analyse the coupling between spherical harmonics. The product of two spherical harmonics admits an expansion in the spherical harmonic basis and can be thus represented as a weighted sum of spherical harmonics, with the Clebsch-Gordan coefficients $C_{\ell m,\ell^\prime m^\prime}^{LM}$ serving as the weights in the coupling relation
\begin{equation*}
Y_{\ell m}(\theta, \phi) Y_{\ell^\prime m^\prime}(\theta, \phi) =\sum_{L\in \mathbb{N}_0} \sum_{M=-L}^{L} \sqrt{\frac{(2\ell+1)(2\ell'+1)}{4\pi(2L+1)}}\,C_{\ell0,\ell'0}^{L0}C_{\ell m,\ell'm'}^{LM}Y_{LM }(\theta, \phi),
\end{equation*}
where $\ell, \ell^\prime,L \in \mathbb{N}_0,$ $ m \in \{-\ell,\ldots,\ell \},$ $ m^\prime \in \{-\ell^\prime,\ldots,\ell^\prime \}$ and $M \in \{-L,\ldots, L\},$ see \cite[Section~5.6.2, (9)]{vmk}.

For $\ell, \ell^\prime,  L \in \mathbb{N}_0 $, the Wigner $3j$-symbol is defined by \begin{equation} \label{Ell}
\begin{pmatrix} \ell & \ell^\prime & L \\ m & m^\prime & M \end{pmatrix} := \frac{(-1)^{\ell - \ell^\prime - M}}{\sqrt{2L + 1}} C_{\ell m,\ell^\prime m^\prime}^{L(-M)},
\end{equation}
see \cite[Section~8.1.2, (12)]{vmk}.

The $3j$-symbol  vanishes unless the following selection rules are satisfied, see \cite[Section~8.1.1, (1) and (2)]{vmk},
\begin{itemize}
        \item $|m|\leq \ell, \: |m^{\prime}|\leq \ell^\prime, \:|M|\leq L, $
	\item $ \left \vert \ell - \ell^\prime \right \vert  \leq L \leq \ell + \ell^\prime $,
	\item $ m + m^\prime +M=0,$
\end{itemize}

By \eqref{Ell} and \cite[Section~8.1.1, (8)]{vmk}, the following unitarity relation holds
\begin{equation*}
\sum_{m=-\ell}^{\ell} \sum_{m^\prime=-\ell^\prime}^{\ell^\prime} \begin{pmatrix} \ell & \ell^\prime & L \\ m & m^\prime & M \end{pmatrix} \begin{pmatrix} \ell & \ell^\prime & L^\prime \\ m & m^\prime & M^\prime \end{pmatrix} = \frac{\delta_{L}^{L^\prime}\delta_{M}^{M^\prime} }{2 L + 1},
\end{equation*}
while the orthogonality property states that
\begin{equation*}
\sum_{L \in \mathbb{N}_0} \sum_{M=-L}^{L} (2L + 1) \begin{pmatrix} \ell & \ell^\prime & L \\ m & m^\prime & M \end{pmatrix} \begin{pmatrix} \ell & \ell^\prime & L \\ \tilde{m} & \tilde{m}^\prime & M \end{pmatrix} = \delta_{m}^{\tilde{m}} \, \delta_{m^\prime}^{\tilde{m}^\prime}.
\end{equation*}

An important relation between spherical harmonic products and Wigner $3j$-symbols is provided by the Gaunt integral \cite[Remark~3.46]{marpecbook}
\begin{equation}\label{eq:gaunt}
\int_{\mathbb{S}^2} Y_{\ell m}(\mathbf{x}) Y_{\ell^\prime m^\prime}(\mathbf{x}) Y_{L M}(\mathbf{x}) d s(\mathbf{x}) = \sqrt{\frac{(2\ell + 1)(2\ell^\prime+ 1)(2L + 1)}{4\pi}} \begin{pmatrix} \ell & \ell^\prime & L \\ 0 & 0 & 0 \end{pmatrix} \begin{pmatrix} \ell & \ell^\prime & L \\ m & m^\prime & M \end{pmatrix}.
\end{equation}

\section{Wigner matrices}\label{appC}
Each element of $SO(3)$ can be represented as a $3\times 3$ real orthogonal matrix with determinant equal to $1$. Wigner $D$-matrices $\left\{D^\ell\right\}_{\ell\in \mathbb{N}_0}$ form a unitary representation of the rotation group and describe the action of rotations on spherical harmonics. If
$f\in L^2(\mathbb{S}^2)$ has harmonics coefficients $f_{\ell m}$, the harmonic coefficients of the rotated function are given by
    \begin{equation}\label{AA}
    (f(\rho^{-1}))_{\ell M} = \sum_{m=-\ell}^{\ell} D_{Mm}^{\ell}(\rho) f_{\ell m},
    \end{equation}
where $D_{Mm}^{\ell}\left(\rho\right)$ are entries of the Wigner $D$-matrix $D^\ell$, see \cite[(6.10)]{marpecbook}.

The Wigner $D$-matrices can also be represented in terms of the Wigner (azimuthal) $d$-matrices as follows
\begin{equation}\label{BB}
D_{Mm}^{\ell}\left(\alpha,\beta,\psi\right) = e^{-iM\alpha}d^{\ell}_{Mm}(\beta)e^{-im\psi},
\end{equation}
where $d^{\ell}_{M m}(\beta)$ is given by the following formula  from \cite[(3.20)]{marpecbook}
\begin{align}\label{dmml}
d^{\ell}_{M m}(\beta) & =  2^{-M}
(1 - \cos \beta)^{\frac{M + m}{2}} \,(1 + \cos \beta)^{\frac{M - m}{2}}\notag\\
&\quad \times \left( \frac{(\ell - M)!\,(\ell + M)!}{(\ell - m)!\,(\ell + m)!} \right)^{1/2}
P_{\ell - m}^{(M - m, \, M + m)}(\cos \beta),
\end{align}
$P_{n}^{(a,b)}(\cdot)$ are Jacobi polynomials with parameters $a$ and $b$.

The $d$-matrices encode rotations about the $y$-axis, while rotations about the $z$-axis are represented by the complex exponential factors involving the remaining Euler angles.

The paper uses the symmetry property, see~ \cite[Section~4.4]{vmk},
$$d_{mM}^{\ell}(\beta)=d_{Mm}^{\ell}(-\beta),$$
and the addition theorem for Wigner $D$-matrices
\begin{equation*}
	\sum_{M=-\ell}^{\ell}e^{-i M \psi^{\prime}}d^{\ell}_{mM}(\beta_1)d^{\ell}_{M,m^\prime}(\beta_2)= e^{-i m \alpha}d_{m m ^\prime}^{\ell}(\beta)e^{-i m ^\prime\psi},
\end{equation*}
where the Euler angles $(\alpha,\beta,\psi)$ are obtained from $\psi^{\prime},\beta_1,$ and $\beta_2$ by combining two rotations, see \cite[Section~4.7.2, (5) and (6)]{vmk}.



\acks  A. Olenko would like to express his gratitude for the support and hospitality provided by Sapienza Universit\`a di Roma during his sabbatical in~2024, that contributed to the initiation and development of this research.
 The authors also thank Prof. D. Marinucci for discussions on the current cosmological motivations for anisotropic spherical models.

\fund  A.Olenko and S.Khan were supported by the Australian Research Council's Discovery Projects funding scheme (project  DP220101680). C.Durastanti has been partially funded by Progetti di Ateneo Sapienza RG1221815C353275 (2022), RM12117A6212F538 (2021) and PRIN 2022 - GRAFIA - 202284Z9E4. 

\competing There were no competing interests to declare which arose during the preparation or publication process of this article.




\end{document}